\renewcommand\epsilon\varepsilon
\newcommand\diag{\operatorname{diag}}
\newcommand\bo{\mathcal{B}}
\newcommand\Tr{\operatorname{Tr}}
\newcommand\conv{\operatorname{conv}}
\newcommand\bbR{\mathbb{R}}
\newcommand\bbC{\mathbb{C}}
\newcommand\apschur{\mathrm{AP}_{p,\mathrm{cb}}^{\mathrm{Schur}}}
\newcommand\lambdaapschur{\Lambda_{p,\mathrm{cb}}^{\mathrm{Schur}}}
\newcommand\lra{\longrightarrow}
\DeclareMathOperator\GL{GL}
\DeclareMathOperator\SL{SL}
\DeclareMathOperator\Sp{Sp}
\DeclareMathOperator\U{U}
\DeclareMathOperator\SO{SO}
\DeclareMathOperator\SU{SU}
\DeclareMathOperator\rr{Rank_{\mathbb{R}}}
\DeclareMathOperator\id{id}
\theoremstyle{definition}
\newtheorem{thm}{Theorem}[section]
\newtheorem{dfn}[thm]{Definition}
\newtheorem{lem}[thm]{Lemma}
\newtheorem{prp}[thm]{Proposition}
\newtheorem{rmk}[thm]{Remark}
\author{Tim de Laat}
\address{Department of Mathematical Sciences, University of Copenhagen,
\newline Universitetsparken 5, DK-2100 Copenhagen \O, Denmark}
\email{tlaat@math.ku.dk}
\thanks{2010 Mathematics Subject Classification: Primary: 46B28; Secondary: 22D25, 46L07.\\The author is supported by the Danish National Research Foundation through the Centre for Symmetry and Deformation.}
\title[Approximation properties for noncommutative $L^p$-spaces]{Approximation properties for noncommutative $L^p$-spaces associated with lattices in Lie groups}
\begin{document}

\maketitle

\begin{abstract}
In 2010, Lafforgue and de la Salle gave examples of noncommutative $L^p$-spaces without the operator space approximation property (OAP) and, hence, without the completely bounded approximation property (CBAP). To this purpose, they introduced the property of completely bounded approximation by Schur multipliers on $S^p$, denoted $\apschur$, and proved that for $p \in [1,\frac{4}{3}) \cup (4,\infty]$ the groups $\SL(n,\mathbb{Z})$, with $n \geq 3$, do not have the $\apschur$. Since for $p \in (1,\infty)$ the $\apschur$ is weaker than the approximation property of Haagerup and Kraus (AP), these groups were also the first examples of exact groups without the AP. Recently, Haagerup and the author proved that also the group $\Sp(2,\bbR)$ does not have the AP, without using the $\apschur$. In this paper, we prove that $\Sp(2,\bbR)$ does not have the $\apschur$ for $p \in [1,\frac{12}{11}) \cup (12,\infty]$. It follows that a large class of noncommutative $L^p$-spaces does not have the OAP or CBAP.
\end{abstract}

\section{Introduction} \label{sec:introduction}
Let $M$ be a finite von Neumann algebra with normal faithful trace $\tau$. For $1 \leq p < \infty$, the noncommutative $L^p$-space $L^p(M,\tau)$ is defined as the completion of $M$ with respect to the norm $\|x\|_p=\tau((x^*x)^{\frac{p}{2}})^{\frac{1}{p}}$, and for $p=\infty$, we put $L^{\infty}(M,\tau)=M$ with operator norm. In \cite{kosaki}, Kosaki showed that noncommutative $L^p$-spaces can be realized by interpolating between $M$ and $L^1(M,\tau)$. This leads to an operator space structure on them, as described by Pisier \cite{pisieroh} (see also \cite{jungeruan}).

An operator space $E$ is said to have the completely bounded approximation property (CBAP) if there exists a net $F_{\alpha}$ of finite-rank maps on $E$ such that $\sup_{\alpha}\|F_{\alpha}\|_{cb} < C$ for some $C > 0$, and $\lim_{\alpha} \|F_{\alpha}x-x\|=0$ for every $x \in E$. The infimum of all possible $C$'s is denoted by $\Lambda(E)$. If $\Lambda(E)=1$, we say that $E$ has the completely contractive approximation property (CCAP). An operator space $E$ is said to have the operator space approximation property (OAP) if there exists a net $F_{\alpha}$ of finite-rank maps on $E$ such that $\lim_{\alpha} \|(\id_{\mathcal{K}(\ell^2)} \otimes F_{\alpha})x-x\|=0$ for all $x \in \mathcal{K}(\ell^2) \otimes_{\min} E$. Here $\mathcal{K}(\ell^2)$ denotes the space of compact operators on the Hilbert space $\ell^2$. The CBAP goes back to De Canni\`ere and Haagerup \cite{decannierehaagerup}, and the OAP was defined by Effros and Ruan \cite{effrosruanap}. By definition, the CCAP implies the CBAP, which in turn implies the OAP.

Recall that a lattice in a Lie group $G$ is a discrete subgroup $\Gamma$ of $G$ such that $G \slash \Gamma$ has finite invariant measure. In this paper, we consider noncommutative $L^p$-spaces of the form $L^p(L(\Gamma))$, where $L(\Gamma)$ is the group von Neumann algebra of a lattice $\Gamma$ in a connected simple Lie group $G$. Such a von Neumann algebra $L(\Gamma)$ is finite and has canonical trace $\tau:x \mapsto \langle x\delta_1,\delta_1 \rangle$, where $\delta_1 \in \ell^2(\Gamma)$ is the characteristic function of the unit element $1 \in \Gamma$.

It was proved by Junge and Ruan \cite[Proposition 3.5]{jungeruan} that if $\Gamma$ is a weakly amenable (countable) discrete group, then for $p \in (1,\infty)$, the noncommutative $L^p$-space $L^p(L(\Gamma))$ has the CBAP. Recall that connected simple Lie groups of real rank zero are amenable. By the work of Cowling and Haagerup \cite{cowlinghaagerup} and Hansen \cite{hansen}, all connected simple Lie groups of real rank one are weakly amenable. This implies that for every $p \in (1,\infty)$ and every lattice $\Gamma$ in a connected simple Lie group $G$ of real rank zero or one, the noncommutative $L^p$-space $L^p(L(\Gamma))$ has the CBAP.

The existence of noncommutative $L^p$-spaces without the CBAP follows from the work of Szankowski \cite{szankowski}. The first concrete examples were given recently by Lafforgue and de la Salle \cite{ldls}. They proved that for all $p \in [1,\frac{4}{3}) \cup (4,\infty]$ and all lattices $\Gamma$ in $\SL(n,\bbR)$, where $n \geq 3$, the space $L^p(L(\Gamma))$ does not have the OAP (or CBAP). They also proved analogous results for lattices in Lie groups over nonarchimedean fields. In their work, the failure of the OAP for the aforementioned noncommutative $L^p$-spaces follows from the failure of a certain approximation property for the groups $\SL(n,\bbR)$. This property, called the property of completely bounded approximation by Schur multipliers on $S^p$ (see Section \ref{sec:apschur}), denoted $\apschur$, was introduced by Lafforgue and de la Salle exactly to this purpose.

Other approximation properties for groups (see \cite{brownozawa}), e.g., amenability, weak amenability, and the approximation property of Haagerup and Kraus (AP) (see \cite{haagerupkraus}), are related to the $\apschur$. It is well-known that amenability of a group $G$ (strictly) implies weak amenability, which in turn (strictly) implies the AP. For $p \in (1,\infty)$, the $\apschur$ is weaker than the AP. In this way, the $\apschur$ gave rise to the first example of an exact group without the AP, namely $\SL(3,\mathbb{Z})$. Recently, Haagerup and the author proved that also $\Sp(2,\bbR)$ does not have the AP \cite{haagerupdelaat1}, in a more direct way than Lafforgue and de la Salle did for $\SL(3,\bbR)$. Indeed, the $\apschur$ was not used in the proof. On the other hand, as was mentioned earlier, the method of Lafforgue and de la Salle also gives information about approximation properties of certain noncommutative $L^p$-spaces. For this, it is actually crucial to use the $\apschur$. Haagerup and the author also proved that all connected simple Lie groups with finite center and real rank greater than or equal to two do not have the AP, building on the failure of the AP for both $\SL(3,\bbR)$ and $\Sp(2,\bbR)$.

The following are the main results of this article.
\newtheorem*{thm:sp2notapschur}{Theorem \ref{thm:sp2notapschur}}
\begin{thm:sp2notapschur}
  For $p \in [1,\frac{12}{11}) \cup (12,\infty]$, the group $\Sp(2,\bbR)$ does not have the $\apschur$.
\end{thm:sp2notapschur}
\newtheorem*{thm:maintheoremnclp}{Theorem \ref{thm:maintheoremnclp}}
\begin{thm:maintheoremnclp}
Let $p \in [1,\frac{12}{11}) \cup (12,\infty]$, and let $\Gamma$ be a lattice in a connected simple Lie group with finite center and real rank greater than or equal to two. Then $L^p(L(\Gamma))$ does not have OAP (or CBAP).
\end{thm:maintheoremnclp}                                                                                                                                                                                                                                                                                                                                                                                                                                                                                                                                           
The paper is organized as follows. In Section \ref{sec:preliminaries}, we recall some preliminary results, and we make a study of Schur multipliers on Schatten classes corresponding to (compact) Gelfand pairs, which provides us with suitable tools for our proof. In Section \ref{sec:sp2}, we prove Theorem \ref{thm:sp2notapschur}, and in Section \ref{sec:general}, we prove Theorem \ref{thm:maintheoremnclp}.

\section{Preliminaries} \label{sec:preliminaries}
\subsection{Schur multipliers on Schatten classes} \label{sec:smosc}
This section partly follows the exposition of \cite[Section 1]{ldls}. More details can be found there.

For $p \in [1,\infty]$ and a (separable) Hilbert space $\mathcal{H}$, let $S^p(\mathcal{H})$ denote the $p^{\textrm{th}}$ Schatten class on $\mathcal{H}$. Recall that $S^{\infty}(\mathcal{H})$ is the Banach space $\mathcal{K}(\mathcal{H})$ of compact operators (with operator norm) on $\mathcal{H}$, and for $p \in [1,\infty)$, the space $S^p(\mathcal{H})$ consists of the operators $T$ on $\mathcal{H}$ such that $\|T\|_p=\Tr((T^*T)^{\frac{p}{2}})^{\frac{1}{p}} < \infty$, where $\Tr$ denotes the (semifinite) trace on $\bo(\mathcal{H})$. In this way, $S^p(\mathcal{H})$ is a Banach space for all $p \in [1,\infty]$. We use the notation $S_n^p=S^p(\ell^2_n)$ and $S^p=S^p(\ell^2)$. Note that the space $S^2(\mathcal{H})$ corresponds to the Hilbert-Schmidt operators on $\mathcal{H}$.

Schatten classes can be realized by interpolating between certain noncommutative $L^p$-spaces in the semifinite setting. Indeed, we have $S^p(\mathcal{H})=L^p(\mathcal{B}(\mathcal{H}),\Tr)$. Noncommutative $L^p$-spaces in the semifinite setting can be defined analogously to the finite case, which was described in Section \ref{sec:introduction}. For details, see \cite{pisiernoncomvvlpsp}. The natural operator space structure on $S^p(\mathcal{H})$ follows from \cite{pisieroh}. For our purposes, the following characterization of the completely bounded norm of a linear map between Schatten classes is important. Recall that $S^p(\mathcal{H}) \otimes S^p(\mathcal{K})$ (algebraic tensor product) embeds naturally into $S^p(\mathcal{H} \otimes \mathcal{K})$ (Hilbert space tensor product). Let $T:S^p(\mathcal{H}) \lra S^p(\mathcal{H})$ be a bounded linear map, and let $\mathcal{K}=\ell^2$. Then $T$ is completely bounded if the map $T \otimes \mathrm{id}_{S^p}$ extends to a bounded linear map on $S^p(\mathcal{H} \otimes \ell^2)$, and we have $\|T\|_{cb}=\|T \otimes \id_{S^p}\|=\sup_{n \in \mathbb{N}} \|T \otimes \id_{S^p_n}\|$ (see \cite[Lemma 1.7]{pisiernoncomvvlpsp}).

A linear map $T:M_n(\bbC) \lra M_n(\bbC)$ of the form $[x_{ij}] \mapsto [\psi_{ij}x_{ij}]$ for some matrix $\psi \in M_n(\mathbb{C})$ is called a Schur multiplier on $M_n(\bbC)$. More precisely, the operator $T$ is called the Schur multiplier on $M_n(\mathbb{C})$ with symbol $\psi$, and it is also denoted by $M_{\psi}$. In what follows, we need more general notions of Schur multipliers.

Let $(X,\mu)$ be a $\sigma$-finite measure space. Let $k \in L^2(X \times X,\mu \otimes \mu)$. It is well-known that the map $T_k:L^2(X,\mu) \lra L^2(X,\mu)$ defined by $(T_kf)(x)=\int_X k(x,y)f(y)d\mu(y)$, is a Hilbert-Schmidt operator on $L^2(X,\mu)$. Conversely, if $T \in S^2(L^2(X,\mu))$, then $T=T_k$ for some $k \in L^2(X \times X,\mu \otimes \mu)$. In this way, we can identify $S^2(L^2(X,\mu))$ with $L^2(X \times X,\mu \otimes \mu)$, and we see that every Schur multiplier on $S^2(L^2(X,\mu))$ comes from a function $\psi \in L^{\infty}(X \times X,\mu \otimes \mu)$ acting by multiplication on $L^2(X \times X,\mu \otimes \mu)$.
\begin{dfn} \label{dfn:multiplier}
  Let $p \in [1,\infty]$, and let $\psi \in L^{\infty}(X \times X,\mu \otimes \mu)$. The Schur multiplier with symbol $\psi$ is said to be bounded (resp.~completely bounded) on $S^p(L^2(X,\mu))$ if it maps $S^p(L^2(X,\mu)) \cap S^2(L^2(X,\mu))$ into $S^p(L^2(X,\mu))$ (by $T_k \mapsto T_{\psi k}$), and if this map extends (necessarily uniquely) to a bounded (resp.~completely bounded) map $M_{\psi}$ on $S^p(L^2(X,\mu))$.
\end{dfn}
The norm of such a bounded multiplier $\psi$ is defined by $\|\psi\|_{MS^p(L^2(X,\mu))}=\|M_{\psi}\|$, and its completely bounded norm by $\|\psi\|_{cbMS^p(L^2(X,\mu))}=\|M_{\psi}\|_{cb}$. The spaces of multipliers and completely bounded multipliers are denoted by $MS^p(L^2(X,\mu))$ and $cbMS^p(L^2(X,\mu))$, respectively. It follows that for every $p \in [1,\infty]$ and $\psi \in L^{\infty}(X \times X,\mu \otimes \mu)$, we have $\|\psi\|_{\infty} \leq \|\psi\|_{MS^p(L^2(X,\mu))} \leq \|\psi\|_{cbMS^p(L^2(X,\mu))}$.

If $\frac{1}{p}+\frac{1}{q}=1$, we have $\|\psi\|_{MS^p(L^2(X,\mu))}=\|\psi\|_{MS^q(L^2(X,\mu))}$. By interpolation and duality we have that whenever $2 \leq p \leq q \leq \infty$, then $\|\psi\|_{MS^p(L^2(X,\mu))} \leq \|\psi\|_{MS^q(L^2(X,\mu))}$. These results also hold for the completely bounded norm.
\begin{lem}(\cite[Lemma 1.5 and Remark 1.6]{ldls}) \label{lem:schurbcb}
The Schur multiplier corresponding to $\psi \in L^{\infty}(X \times X,\mu \otimes \mu)$ is completely bounded on $S^p(L^2(X,\mu))$ if and only if the Schur multiplier corresponding to $\tilde{\psi}(x,\xi,y,\eta)=\psi(x,y)$ is completely bounded on $S^p(L^2(X \times \Omega,\mu \otimes \nu))$, where $(\Omega,\nu)$ is a $\sigma$-finite measure space, and
\[
  \|\psi\|_{cbMS^p(L^2(X,\mu))}=\|\tilde{\psi}\|_{cbMS^p(L^2(X \times \Omega,\mu \otimes \nu))}.
\]
If $L^2(\Omega,\nu)$ is infinite-dimensional, these norms equal $\|\tilde{\psi}\|_{MS^p(L^2(X \times \Omega,\mu \otimes \nu))}$.
\end{lem}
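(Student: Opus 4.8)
The plan is to reduce the entire statement to one algebraic observation: the Schur multiplier with symbol $\tilde\psi$ is nothing but the amplification of $M_\psi$ by the identity on $L^2(\Omega,\nu)$. First I would use the Hilbert--Schmidt identifications $S^2(L^2(X\times\Omega,\mu\otimes\nu))=L^2\big((X\times\Omega)^2\big)$ and $L^2(X\times\Omega)=L^2(X)\otimes L^2(\Omega)$, under which $S^2(L^2(X)\otimes L^2(\Omega))\cong S^2(L^2(X))\otimes S^2(L^2(\Omega))$. On elementary kernels $k\otimes g$ one has
\[
  \tilde\psi(x,\xi,y,\eta)\,(k\otimes g)(x,\xi,y,\eta)=\psi(x,y)k(x,y)\,g(\xi,\eta),
\]
so that $M_{\tilde\psi}(T_k\otimes T_g)=M_\psi(T_k)\otimes T_g$. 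Since both $M_{\tilde\psi}$ and $M_\psi\otimes\id_{S^p(L^2(\Omega,\nu))}$ are, by Definition \ref{dfn:multiplier}, the unique extensions from this common dense subspace $S^2\cap S^p$, boundedness of one is equivalent to boundedness of the other, and $M_{\tilde\psi}=M_\psi\otimes\id_{S^p(L^2(\Omega,\nu))}$ as soon as either side is bounded.

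Granting this identity, the lemma becomes a statement about amplifications, and the key auxiliary fact I would isolate is that for any Hilbert space $K$ the amplified norm $\|M_\psi\otimes\id_{S^p(K)}\|$ is nondecreasing in $K$ and bounded above by $\|M_\psi\|_{cb}$, with equality once $K$ is infinite-dimensional. Monotonicity holds because for $K\subseteq K'$ the compression with the orthogonal projection embeds $S^p(K)$ isometrically and complementedly into $S^p(K')$ and commutes with the multiplier. The upper bound follows since finite-rank elements are dense and any such element of $S^p(L^2(X)\otimes K)$ sits in $S^p(L^2(X)\otimes K_0)$ with $K_0\cong\ell^2_n$ finite-dimensional, whence $\|M_\psi\otimes\id_{S^p_n}\|\leq\|M_\psi\|_{cb}$. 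Finally, the characterization $\|M_\psi\|_{cb}=\|M_\psi\otimes\id_{S^p}\|=\sup_n\|M_\psi\otimes\id_{S^p_n}\|$ recalled before Definition \ref{dfn:multiplier} shows that an infinite-dimensional $K$, containing isometric copies of every $\ell^2_n$, forces $\|M_\psi\otimes\id_{S^p(K)}\|\geq\sup_n\|M_\psi\otimes\id_{S^p_n}\|=\|M_\psi\|_{cb}$, giving equality.

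For the completely bounded norm equality I would then compute, using the amplification identity twice,
\[
  \|\tilde\psi\|_{cbMS^p}=\|M_{\tilde\psi}\otimes\id_{S^p}\|=\|M_\psi\otimes\id_{S^p(L^2(\Omega,\nu))}\otimes\id_{S^p}\|=\|M_\psi\otimes\id_{S^p(L^2(\Omega,\nu)\otimes\ell^2)}\|.
\]
Since $L^2(\Omega,\nu)\otimes\ell^2$ is infinite-dimensional, the auxiliary fact yields $\|\tilde\psi\|_{cbMS^p}=\|M_\psi\|_{cb}=\|\psi\|_{cbMS^p}$; in particular one side is finite exactly when the other is, which is the claimed equivalence together with the norm identity. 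For the last assertion, if $L^2(\Omega,\nu)$ is itself infinite-dimensional then already $\|\tilde\psi\|_{MS^p}=\|M_{\tilde\psi}\|=\|M_\psi\otimes\id_{S^p(L^2(\Omega,\nu))}\|=\|M_\psi\|_{cb}$, so the plain multiplier norm of $\tilde\psi$ coincides with the completely bounded norm of $\psi$.

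The main obstacle I anticipate is not conceptual but one of care: verifying that the factorization $M_{\tilde\psi}=M_\psi\otimes\id$ is legitimate at the level of the $S^p$-extensions, and not merely on Hilbert--Schmidt kernels, and handling the measure-theoretic bookkeeping behind $L^2(X\times\Omega)=L^2(X)\otimes L^2(\Omega)$ together with the $\sigma$-finiteness hypotheses so that the kernel picture, and hence the tensor factorization of the multiplier, is available throughout.
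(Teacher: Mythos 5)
Your argument is correct and is essentially the paper's own route: the paper gives no proof of this lemma, citing Lemma 1.5 and Remark 1.6 of Lafforgue and de la Salle, whose proof is exactly your amplification identity $M_{\tilde\psi}=M_\psi\otimes\id_{S^p(L^2(\Omega,\nu))}$ on kernels combined with the characterization $\|M_\psi\|_{cb}=\sup_{n}\|M_\psi\otimes\id_{S^p_n}\|$ and the compression/monotonicity argument. The only slip is immaterial: a finite-rank operator on $L^2(X,\mu)\otimes K$ need not lie in any $S^p(L^2(X,\mu)\otimes K_0)$ with $K_0$ finite-dimensional, but operators whose kernels lie in the algebraic tensor product do, and these are dense in $S^p$, which is all your density step actually requires.
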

\begin{lem}(\cite[Theorem 1.19]{ldls}) \label{lem:finitecharacterization}
  Let $(X,\mu)$ be a locally compact space with a $\sigma$-finite Radon measure $\mu$, and let $\psi:X \times X \lra \bbC$ be a bounded continuous function. Let $1 \leq p \leq \infty$. The following are equivalent:
\begin{enumerate}
  \item we have $\psi \in MS^p(L^2(X,\mu))$ with $\|\psi\|_{MS^p(L^2(X,\mu))} \leq C$,
  \item for every finite set $F=\{x_1,\ldots,x_n\} \subset X$ such that $F \subset \mathrm{supp}(\mu)$, the Schur multiplier given by $(\psi(x_i,x_j))_{i,j}$ is bounded on $S^p(\ell^2(F))$ with norm smaller than or equal to $C$.
\end{enumerate}
  The analogous statement holds in the completely bounded case. In particular, the norm and the completely bounded norm of the multiplier only depend on the support of $\mu$, and if this support does not have any isolated points, then the norm and the completely bounded norm coincide.
\end{lem}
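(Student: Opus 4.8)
The plan is to prove the two implications separately, with the reverse implication $(2)\Rightarrow(1)$ carrying essentially all the weight. The forward implication $(1)\Rightarrow(2)$ is a compression argument that is finite-dimensional in nature. Given a finite set $F=\{x_1,\dots,x_n\}\subset\mathrm{supp}(\mu)$, I would choose pairwise disjoint open neighbourhoods $U_i\ni x_i$, which necessarily satisfy $\mu(U_i)>0$ precisely because $x_i\in\mathrm{supp}(\mu)$, and set $e_i=\mathbf 1_{U_i}/\sqrt{\mu(U_i)}$. For a matrix $a=(a_{ij})\in S^p_n$ the operator $\sum_{i,j}a_{ij}\,e_i\otimes\bar e_j$ has $S^p$-norm exactly $\|a\|_{S^p_n}$, and compressing $M_\psi$ to the span of the $e_i$ is norm non-increasing. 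Since this compression equals the finite Schur multiplier whose symbol is the block-average of $\psi$ over $U_i\times U_j$, and these averages converge to $\psi(x_i,x_j)$ as the $U_i$ shrink (continuity of $\psi$, with entrywise convergence sufficing in fixed finite dimension), I obtain $\|M_{(\psi(x_i,x_j))}\|_{S^p_n}\le\|\psi\|_{MS^p(L^2(X,\mu))}\le C$. The completely bounded case is identical, using that compressions are completely contractive.

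For $(2)\Rightarrow(1)$, by duality (valid for $1\le p<\infty$, the endpoint $p=\infty$ following from the relation $\|\psi\|_{MS^1}=\|\psi\|_{MS^\infty}$ recorded above) it suffices to bound the pairing $\langle M_\psi T,T'\rangle=\int_X\int_X\psi(x,y)k_T(x,y)\overline{k_{T'}(x,y)}\,d\mu(x)d\mu(y)$ by $C\|T\|_p\|T'\|_q$ for $T,T'$ ranging over a dense class, for which I would take finite sums of rank-one operators with continuous, compactly supported factors, so that $k_T,k_{T'}$ are continuous and compactly supported. Fixing such $T,T'$ and a compact $K$ containing their supports, I would partition $K\cap\mathrm{supp}(\mu)$ into small positive-measure Borel blocks $B_1,\dots,B_n$ with sample points $x_i\in B_i\cap\mathrm{supp}(\mu)$ and weights $\mu_i=\mu(B_i)$. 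As the integrand is continuous and compactly supported and $\mu$ is Radon, the pairing is the limit, along refining partitions, of the Riemann sums $\sum_{i,j}\psi(x_i,x_j)k_T(x_i,x_j)\overline{k_{T'}(x_i,x_j)}\,\mu_i\mu_j$. The point of using a single sample point per block is that $\{x_1,\dots,x_n\}$ is then a genuine finite subset of $\mathrm{supp}(\mu)$, so each Riemann sum equals $\langle M_{(\psi(x_i,x_j))}\hat A,\hat B\rangle$ for the matrices $\hat A_{ij}=k_T(x_i,x_j)\sqrt{\mu_i\mu_j}$ and $\hat B_{ij}=k_{T'}(x_i,x_j)\sqrt{\mu_i\mu_j}$, and hypothesis $(2)$ bounds it by $C\|\hat A\|_{S^p_n}\|\hat B\|_{S^q_n}$.

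It then remains to show $\|\hat A\|_{S^p_n}\to\|T\|_p$ and $\|\hat B\|_{S^q_n}\to\|T'\|_q$. The natural comparison is with the compression $P_nTP_n$ onto the span of $e_i=\mathbf 1_{B_i}/\sqrt{\mu_i}$, which is a (completely) contractive conditional expectation on $S^p$; along a refining sequence generating the Borel sets one has $P_nTP_n\to T$ in $S^p$ by noncommutative martingale convergence, so $\|P_nTP_n\|_{S^p_n}\to\|T\|_p$. The matrix of $P_nTP_n$ has entries $\frac1{\sqrt{\mu_i\mu_j}}\int_{B_i\times B_j}k_T$, differing from $\hat A_{ij}$ only by replacing the block average of $k_T$ with its value at $(x_i,x_j)$. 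This is where I expect the main obstacle: the difference is small entrywise and uniformly by continuity of $k_T$, yet Schatten $p$-norms are \emph{not} controlled by entrywise smallness. The resolution is that the error matrix factors as $\diag(v)\,E\,\diag(v)$ with $v=(\sqrt{\mu_i})_i$, where $\|E\|$ is governed by the modulus of continuity of $k_T$; since the weight matrix $(\sqrt{\mu_i\mu_j})_{ij}=vv^{\mathrm t}$ is rank one with $\|v\|_2^2=\mu(K)$ bounded independently of the partition, the weights absorb the dimension growth and the error tends to $0$ in $S^p_n$. Combining the three convergences yields $|\langle M_\psi T,T'\rangle|\le C\|T\|_p\|T'\|_q$, hence $\psi\in MS^p(L^2(X,\mu))$ with norm $\le C$; the completely bounded case follows verbatim with cb-norms, the conditional expectations being completely contractive.

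Finally, the supplementary assertions drop out of the characterization. That the norm and cb-norm depend only on $\mathrm{supp}(\mu)$ is immediate, as only the values $\psi(x_i,x_j)$ at support points enter $(2)$. For the coincidence of the two norms when $\mathrm{supp}(\mu)$ has no isolated points, I would approximate each point of a finite set $F$ by a cluster of distinct nearby support points; by continuity of $\psi$ this replaces $(\psi(x_i,x_j))$ by an amplified, block-repeated matrix, and Lemma \ref{lem:schurbcb} identifies the bounded norm of such amplifications with the completely bounded norm. Passing to the limit over finer and larger clusters shows that the supremum of bounded norms over finite subsets of $\mathrm{supp}(\mu)$ already computes the cb-norm, giving $\|\psi\|_{MS^p(L^2(X,\mu))}=\|\psi\|_{cbMS^p(L^2(X,\mu))}$.
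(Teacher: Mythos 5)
The paper offers no proof of this lemma to compare against: it is quoted as \cite[Theorem 1.19]{ldls} and used as a black box. Judged on its own, your overall architecture --- compression onto normalized indicators of shrinking disjoint neighbourhoods for $(1)\Rightarrow(2)$, discretization plus $S^p$--$S^q$ duality for $(2)\Rightarrow(1)$, and clusters of distinct nearby support points combined with the amplification identity for the equality of norm and cb-norm --- is the natural one, and the direction $(1)\Rightarrow(2)$ as well as the two supplementary assertions are sound as sketched.

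There is, however, a genuine gap at precisely the step you flag as the main obstacle, and the resolution you propose does not work as stated. You claim that because the weight matrix $(\sqrt{\mu_i\mu_j})_{ij}=vv^{\mathrm t}$ is rank one with $\|v\|_2^2=\mu(K)$ bounded, an error matrix of the form $R=\diag(v)E\diag(v)$ with $\max_{i,j}|E_{ij}|\le\omega(\delta)$ tends to $0$ in $S^p_n$. This principle is false for $p<2$: with $\mu_i=\mu(K)/n$ and $E=\omega(\delta)H$, where $H_{jk}=e^{2\pi i jk/n}$ (so that $n^{-1/2}H$ is unitary and all entries of $E$ have modulus $\omega(\delta)$), one gets $\|R\|_{S^1}=\mu(K)\,\omega(\delta)\sqrt{n}$, which need not tend to $0$ as the partition refines, since $n\to\infty$ with no a priori relation to $\omega(\delta)$. (Likewise, the operator norm of $E$ is not governed by the modulus of continuity; it can be of order $n\,\omega(\delta)$.) The correct reason the error vanishes is the bounded rank of $T$, which your setup already provides: writing $k_T=\sum_{r=1}^N f_r\otimes\bar g_r$ with $N$ fixed and expanding $k_T(x_i,x_j)-\mathrm{avg}_{B_i\times B_j}k_T=\sum_r\bigl[(f_r(x_i)-\mathrm{avg}_{B_i}f_r)\overline{g_r(x_j)}+(\mathrm{avg}_{B_i}f_r)\bigl(\overline{g_r(x_j)}-\overline{\mathrm{avg}_{B_j}g_r}\bigr)\bigr]$ exhibits $R$ as a sum of $2N$ rank-one matrices $uv^{*}$ with $\|u\|_2\|v\|_2\le\omega(\delta)\,\mu(K)\max_r\bigl(\|f_r\|_\infty+\|g_r\|_\infty\bigr)$, whence $\|R\|_{S^1}\le 2N\cdot O(\omega(\delta))\to0$, and a fortiori $\|R\|_{S^p_n}\to 0$ for every $p$. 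Equivalently, $\mathrm{rank}(R)\le 2N$ gives $\|R\|_{S^1}\le\sqrt{2N}\,\|R\|_{S^2}$, and the Hilbert--Schmidt norm of $R$ is genuinely at most $\omega(\delta)\mu(K)$. With this repair the argument goes through; the rest of your write-up (density of the chosen class of $T$, lower semicontinuity to pass from the dense class to all of $S^p\cap S^2$, and the cb versions) is routine but should be said.
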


\subsection{Schur multipliers on locally compact groups} \label{subsec:mloccpt}
For a locally compact group $G$ and a function $\varphi \in L^{\infty}(G)$, we define the function $\check{\varphi} \in L^{\infty}(G \times G)$ by $\check{\varphi}(g,h)=\varphi(g^{-1}h)$. The notation $\check{\varphi}$ will be used without further mentioning. In what follows, we will consider continuous functions $\varphi:G \lra \bbC$ such that $\check{\varphi}$ is a (completely bounded) Schur multiplier on $S^p(L^2(G))$.

\subsection{$KAK$ decomposition for Lie groups}
Recall that every connected semisimple Lie group $G$ with finite center can be decomposed as $G=KAK$, where $K$ is a maximal compact subgroup (unique up to conjugation) and $A$ is an abelian Lie group such that its Lie algebra $\mathfrak{a}$ is a Cartan subspace of the Lie algebra $\mathfrak{g}$ of $G$. The dimension of $\mathfrak{a}$ is called the real rank of $G$ and is denoted by $\rr(G)$. The $KAK$ decomposition is in general not unique. However, after choosing a set of positive roots and restricting to the closure $\overline{A^{+}}$ of the positive Weyl chamber $A^{+}$, we still have $G=K\overline{A^{+}}K$. Moreover, if $g=k_1ak_2$, where $k_1,k_2 \in K$ and $a \in \overline{A^{+}}$, then $a$ is unique. For more details, see \cite{helgasonlie}, \cite{knapp}.

\subsection{Gelfand pairs and spherical functions} \label{subsec:gpsf1}
Let $G$ be a Lie group with compact subgroup $K$. We denote the (left) Haar measure on $G$ by $dx$ and the normalized Haar measure on $K$ by $dk$. A function $\varphi:G \lra \bbC$ is said to be $K$-bi-invariant if $\varphi(k_1gk_2)=\varphi(g)$ for all $g \in G$ and $k_1,k_2 \in K$. Note that for $\varphi \in C(G)$, the continuous function defined by $\varphi^{K}(g)=\int_K \int_K \varphi(kgk^{\prime})dkdk^{\prime}$ is $K$-bi-invariant. By abuse of notation, we denote the space of $K$-bi-invariant compactly supported continuous functions on $G$ by $C_c(K \backslash G \slash K)$. This space can be considered as a subalgebra of the convolution algebra $C_c(G)$. If this subalgebra is commutative, then the pair $(G,K)$ is said to be a Gelfand pair. Equivalently, if $G$ is a Lie group with compact subgroup $K$, then $(G,K)$ is a Gelfand pair if and only if for every irreducible unitary representation $\pi$ of $G$ on a Hilbert space $\mathcal{H}_{\pi}$, the space $\mathcal{H}_{\pi_e}$ consisting of $K$-invariant vectors, i.e., $\mathcal{H}_{\pi_e}=\{ \xi \in \mathcal{H} \mid \forall k \in K:\,\pi(k)\xi=\xi \}$, is at most one-dimensional. Also, the pair $(G,K)$ is a Gelfand pair if and only if the representation $L^2(G \slash K)$ is multiplicity free.

Let $(G,K)$ be a Gelfand pair. A function $h \in C(K \backslash G \slash K)$ is called a spherical function if the functional $\chi$ on $C_c(K \backslash G \slash K)$ given by $\chi(\varphi)=\int_G \varphi(x)h(x^{-1})dx$ defines a nontrivial character, i.e., $\chi(\varphi \ast \psi)=\chi(\varphi)\chi(\psi)$ for all $\varphi,\psi \in C_c(K \backslash G \slash K)$. Spherical functions arise as the matrix coefficients of $K$-invariant vectors in irreducible representations of $G$.

It is possible to consider Gelfand pairs in more general settings than Lie groups, e.g., in the setting of locally compact groups (see \cite{vandijk},\cite{faraut}).

\subsection{Schur multipliers on compact Gelfand pairs}
Let $G$ and $K$ be Lie groups such that $(G,K)$ is a Gelfand pair, and let $X=G \slash K$ denote the homogeneous space (with quotient topology) corresponding with the canonical (transitive) action of $G$. It follows that $K$ is the stabilizer subgroup of a certain element $e_0 \in X$. In this section we consider Schur multipliers on the Schatten classes $S^p(\mathcal{H})$, where $\mathcal{H}=L^2(G)$ or $L^2(X)$. To this end, it is natural to look at multipliers on $G$ that are $K$-bi-invariant. Denote by $D$ the space $K \backslash G \slash K$ as a topological space, and denote by $f:K \backslash G \slash K \lra D$, $KgK \mapsto \xi$ the corresponding homeomorphism. It follows that every function $\varphi$ in $C(K \backslash G \slash K)$ induces a continuous function $\varphi^0$ on $D$ such that $\varphi(g)=\varphi^0(\xi)$ for all $g \in G$, where $\xi$ is the image under the homeomorphism $f$.

A Gelfand pair $(G,K)$ is called compact if $G$ is a compact group. In this section, all Gelfand pairs are assumed to be compact, unless explicitly stated otherwise. For compact groups every representation on a Hilbert space is equivalent to a unitary representation, every irreducible representation is finite-dimensional, and every unitary representation is the direct sum of irreducible ones. For an irreducible unitary representation $\pi$ of $G$ on a Hilbert space $\mathcal{H}_{\pi}$, let $P_{\pi}=\int_K \pi(k)dk$ denote the projection onto $\mathcal{H}_{\pi_e}$ (see Section \ref{subsec:gpsf1}), and let $\hat{G}_K$ denote the space of equivalence classes of the irreducible unitary representations $\pi$ of $G$ such that $P_{\pi} \neq 0$.
\begin{lem} \label{lem:peterweylspecial}
  Let $(G,K)$ be a compact Gelfand pair, and let $X=G \slash K$ be the corresponding (compact) homogeneous space. Then
\[
  L^2(X)=\oplus_{\pi \in \hat{G}_K} \mathcal{H}_{\pi}.
\]
Let $h_{\pi}$ denote the spherical function corresponding to the equivalence class $\pi$ of representations. Then for every $\varphi \in L^2(K \backslash G \slash K)$ we have
\[
  \varphi=\sum_{\pi \in \hat{G}_K} c_{\pi} \dim{\mathcal{H}_{\pi}} h_{\pi},
\]
where $c_{\pi}=\langle \varphi,h_{\pi} \rangle$. Moreover, denoting by $h^0_{\pi}$ the (spherical) function on $D$ corresponding to $h_{\pi}$, we have $\varphi^0=\sum_{\pi \in \hat{G}_K} c_{\pi} (\dim{\mathcal{H}_{\pi}})h^0_{\pi}$.
\end{lem}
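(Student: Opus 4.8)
The plan is to derive both assertions from the Peter--Weyl theorem, with the Gelfand pair hypothesis serving only to pin down the relevant multiplicities. Recall that Peter--Weyl gives an orthogonal decomposition $L^2(G) = \bigoplus_{\pi \in \hat{G}} \mathcal{H}_\pi \otimes \mathcal{H}_\pi^*$ as a $G \times G$-module, realized concretely by the matrix coefficients $c_{v,w}(g) = \langle \pi(g)v, w\rangle$. A direct computation shows that left translation $\lambda(x)c_{v,w} = c_{v,\pi(x)w}$ acts on the $w$-variable, while right translation $\rho(x)c_{v,w} = c_{\pi(x)v,w}$ acts on the $v$-variable. Moreover, the Schur orthogonality relations give $\langle c_{v,w}, c_{v',w'}\rangle_{L^2(G)} = (\dim \mathcal{H}_\pi)^{-1}\langle v,v'\rangle\overline{\langle w,w'\rangle}$ for coefficients of a single class $\pi$, and orthogonality across distinct classes. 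I would take this as the starting point.

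For the first assertion, identify $L^2(X) = L^2(G \slash K)$ with the space of right-$K$-invariant functions in $L^2(G)$. By the computation above, right-$K$-invariance forces the $v$-variable into $\mathcal{H}_\pi^K = \mathcal{H}_{\pi_e} = \Ima P_\pi$, so $L^2(X) = \bigoplus_\pi \mathcal{H}_{\pi_e} \otimes \mathcal{H}_\pi^*$, where $G$ now acts by left translation on the second factor. The Gelfand condition says $\dim \mathcal{H}_{\pi_e} \leq 1$, with equality precisely when $\pi \in \hat{G}_K$; hence every $\pi \in \hat{G}_K$ contributes exactly one copy of the contragredient $\mathcal{H}_\pi^*$ and all other classes contribute nothing. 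Since $\hat{G}_K$ is stable under passing to contragredients (a $K$-fixed vector for $\pi$ yields one for $\overline{\pi}$), reindexing $\pi \mapsto \overline{\pi}$ and using $\dim \mathcal{H}_\pi = \dim \mathcal{H}_\pi^*$ gives $L^2(X) = \bigoplus_{\pi \in \hat{G}_K} \mathcal{H}_\pi$.

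For the expansion, note that $L^2(K \backslash G \slash K)$ consists of the functions invariant under both left and right translation by $K$, so in the matrix-coefficient picture both $v$ and $w$ must lie in $\mathcal{H}_{\pi_e}$. For $\pi \in \hat{G}_K$, fix a unit vector $\xi_\pi$ spanning $\mathcal{H}_{\pi_e}$; then the diagonal coefficient $g \mapsto \langle \pi(g)\xi_\pi, \xi_\pi\rangle$ is exactly the spherical function $h_\pi$, by the description of spherical functions as matrix coefficients of $K$-invariant vectors recalled in Section \ref{subsec:gpsf1} (note $h_\pi(1) = \langle \xi_\pi, \xi_\pi\rangle = 1$, matching the normalization implicit in the character $\chi$). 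The Gelfand hypothesis ensures these are the only surviving coefficients, so $\{h_\pi : \pi \in \hat{G}_K\}$ is an orthogonal basis of $L^2(K \backslash G \slash K)$. The Schur relations give $\|h_\pi\|_2^2 = (\dim \mathcal{H}_\pi)^{-1}$, whence for $\varphi \in L^2(K \backslash G \slash K)$ the $h_\pi$-coefficient is $\langle \varphi, h_\pi\rangle / \|h_\pi\|_2^2 = (\dim \mathcal{H}_\pi)\langle \varphi, h_\pi\rangle = c_\pi \dim \mathcal{H}_\pi$, which is the claimed identity $\varphi = \sum_{\pi \in \hat{G}_K} c_\pi \dim \mathcal{H}_\pi \, h_\pi$. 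The statement about $\varphi^0$ and $h^0_\pi$ is then immediate: each is obtained by composing the corresponding $K$-bi-invariant function with the homeomorphism $f$, so the identity transports verbatim to $D$.

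The main obstacle I anticipate is bookkeeping rather than a deep point: one must track the left and right actions in the Peter--Weyl decomposition carefully enough to see that two-sided $K$-invariance cuts down to the single vector $\xi_\pi$ in each relevant class, and one must fix the normalization constant $(\dim \mathcal{H}_\pi)^{-1}$ in $\|h_\pi\|_2^2$ so that the factor $\dim \mathcal{H}_\pi$ appears in the right place. A secondary point worth stating explicitly is that the diagonal matrix coefficient of the \emph{normalized} $K$-fixed vector genuinely coincides with the spherical function defined through the character $\chi$; this is standard, but the normalization $h_\pi(1) = 1$ should be checked rather than assumed.
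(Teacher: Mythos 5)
Your argument is correct and is precisely the route the paper takes: it proves the lemma by citing the Peter--Weyl theorem for compact homogeneous spaces (Helgason) and the explicit spherical expansion (Wolf), and your proposal simply carries out that standard derivation — isotypic decomposition, identification of the right-$K$-invariant and $K$-bi-invariant parts via the Gelfand condition, and Schur orthogonality giving $\|h_\pi\|_2^2=(\dim\mathcal{H}_\pi)^{-1}$ and hence the factor $\dim\mathcal{H}_\pi$ in the coefficients. No gaps; the bookkeeping of left/right actions and the normalization $h_\pi(1)=1$ are handled correctly.
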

This lemma follows from the Peter-Weyl theorem applied to a compact homogeneous space (see, e.g., \cite[Section V.4]{helgasongroupsanalysis}). The decomposition of $\varphi$ (and hence $\varphi^0$) is stated explicitly in \cite[Proposition 9.10.4]{wolf}.
\begin{lem} \label{lem:psiX}
  Let $(G,K)$ be a (not necessarily compact) Gelfand pair, and let $X=G \slash K$ denote the corresponding homogeneous space. Choose $e_0 \in X$ so that $K$ is its stabilizer subgroup. Let $\varphi \in C(K \backslash G \slash K)$. Then there exists a continuous function $\psi:X \times X \lra \bbC$ such that for all $g,h \in G$,
\[
  \varphi(g^{-1}h)=\psi(ge_0,he_0).
\]
\end{lem}
\begin{proof}
  If $ge_0=g^{\prime}e_0$ for $g,g^{\prime} \in G$, then $g^{-1}g^{\prime} \in K$, and hence $g^{\prime}=gk$ for some $k \in K$. Hence, by the $K$-bi-invariance of $\varphi$, we know that $\varphi(g^{-1}h)$ depends only on the pair $(ge_0,he_0) \in X \times X$, so there exists a function $\psi:X \times X \lra \bbC$ such that $\varphi(g^{-1}h)=\psi(ge_0,he_0)$. Since $X=G \slash K$ is equipped with the quotient topology, this function is continuous.
\end{proof}
\begin{lem} \label{lem:quotientspace}
Let $(G,K)$ be a compact Gelfand pair. If $\varphi:G \lra \bbC$ is a continuous $K$-bi-invariant function such that $\check{\varphi} \in cbMS^p(L^2(G))$ (see Section \ref{subsec:mloccpt}) for some $p \in [1,\infty]$, then $\|\psi\|_{cbMS^p(L^2(X))} = \|\check{\varphi}\|_{cbMS^p(L^2(G))}$, where $\psi:X \times X \lra \bbC$ is as defined in Lemma \ref{lem:psiX}. If $K$ is an infinite group, then these norms are equal to $\|\check{\varphi}\|_{MS^p(L^2(G))}$.
\end{lem}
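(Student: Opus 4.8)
The plan is to reduce the statement directly to Lemma \ref{lem:schurbcb} by realising $(G,dg)$, as a measure space, as a product $(X \times K, \mu \otimes dk)$ in such a way that $\check{\varphi}$ becomes precisely the inflated symbol $\tilde{\psi}(x,k,y,l) = \psi(x,y)$ appearing there. Once this identification is in place, both the equality of completely bounded norms and the collapse to the ordinary multiplier norm when $K$ is infinite follow immediately from that lemma, applied with $(\Omega,\nu) = (K,dk)$.

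First I would set up the measure-theoretic decomposition. Since $G$ is compact, $K$ is a compact subgroup and $X = G \slash K$ carries a $G$-invariant Radon probability measure $\mu$; recall $dk$ is normalised to have total mass one. The quotient map $\pi \colon G \lra X$, $g \mapsto ge_0$, admits a Borel section $s \colon X \lra G$, and I would use it to define the Borel isomorphism $\Phi \colon X \times K \lra G$, $(x,k) \mapsto s(x)k$, whose inverse is $g \mapsto (\pi(g), s(\pi(g))^{-1}g)$. Weil's integration formula then shows that $\Phi$ transports $\mu \otimes dk$ to $dg$, so $\Phi$ is an isomorphism of measure spaces.

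Next I would identify the symbols. Writing $g = \Phi(x,k)$ and $h = \Phi(y,l)$, one has $ge_0 = s(x)ke_0 = s(x)e_0 = x$ and $he_0 = y$ (using that $K$ stabilises $e_0$), so Lemma \ref{lem:psiX} gives $\check{\varphi}(g,h) = \varphi(g^{-1}h) = \psi(x,y)$; that is, $\check{\varphi} \circ (\Phi \times \Phi) = \tilde{\psi}$. A measure isomorphism induces a unitary $U \colon L^2(G) \lra L^2(X \times K)$, and conjugation by $U$ is a complete isometry between the corresponding Schatten classes that carries the Schur multiplier with symbol $\check{\varphi}$ to the one with symbol $\tilde{\psi}$ (the kernel of a Hilbert--Schmidt operator transforms by the diagonal action of $\Phi$, and pointwise multiplication by the symbol transforms compatibly). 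Hence $\|\check{\varphi}\|_{cbMS^p(L^2(G))} = \|\tilde{\psi}\|_{cbMS^p(L^2(X \times K))}$, and Lemma \ref{lem:schurbcb} equates the right-hand side with $\|\psi\|_{cbMS^p(L^2(X))}$. If $K$ is infinite, then $L^2(K)$ is infinite-dimensional, and the final clause of Lemma \ref{lem:schurbcb} identifies all of these with $\|\tilde{\psi}\|_{MS^p(L^2(X \times K))} = \|\check{\varphi}\|_{MS^p(L^2(G))}$.

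The main obstacle I anticipate lies in the careful justification of the two measure-theoretic ingredients: the existence of a Borel section and the resulting product decomposition of Haar measure via Weil's formula, and the fact that a measure-space isomorphism induces a complete isometry of Schatten classes intertwining Schur multipliers with their transported symbols. Both are standard, but they are exactly what lets the geometric passage from $G$ to $X \times K$ be recast as the purely measure-theoretic inflation of Lemma \ref{lem:schurbcb}; and the fact that the norms depend only on the support of the measure, which here is all of $X$, can be read off from Lemma \ref{lem:finitecharacterization}, so the normalisation choices play no role.
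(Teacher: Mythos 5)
Your proof is correct and follows essentially the same route as the paper: both use a Borel cross section of the quotient map $G \lra G\slash K$ (the paper cites Mackey) to identify $G$ with $X \times K$ as a measure space, observe that $\check{\varphi}$ becomes the inflated symbol $\tilde{\psi}$, and then apply Lemma \ref{lem:schurbcb} with $\Omega = K$. You merely spell out the details (Weil's formula, the unitary implementing the identification) that the paper leaves implicit.
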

\begin{proof}
By \cite[Lemma 1.1]{mackey}, the quotient map $G \lra G \slash K$ has a Borel cross section. Let $Y$ denote the image of this cross section. The result now follows directly from Lemma \ref{lem:schurbcb} by putting $\Omega=K$, so that $G=Y \times K$ as a measure space by the map $(y,k) \mapsto yk$ for $y \in Y$ and $k \in K$.
\end{proof}
We can now prove a decomposition result for Schur multipliers on $S^p(L^2(G))$ coming from $K$-bi-invariant functions.
\begin{prp} \label{prp:multiplierdecompositionestimate}
Let $(G,K)$ be a compact Gelfand pair, suppose that $K$ has infinitely many elements, and let $p \in [1,\infty)$. Let $\varphi:G \lra \bbC$ be a continuous $K$-bi-invariant function such that $\check{\varphi} \in MS^p(L^2(G))$. Then
\[
  \left( \sum_{\pi \in \hat{G}_K} |c_{\pi}|^p (\dim{\mathcal{H}_{\pi}}) \right)^{\frac{1}{p}} \leq \|\check{\varphi}\|_{MS^p(L^2(G))},
\]
where $c_{\pi}$ and $\mathcal{H}_{\pi}$ are as in Lemma \ref{lem:peterweylspecial}.
\end{prp}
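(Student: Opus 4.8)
The plan is to extract the quantity $\left(\sum_{\pi}|c_\pi|^p\dim\mathcal{H}_\pi\right)^{1/p}$ as the $S^p$-norm of a single, explicit diagonal operator, and then recognize that this operator is obtained from an element of the form $M_{\check\varphi}(T)$ by a composition of norm-one (or norm-controlling) maps. First I would pass from $L^2(G)$ to the homogeneous space $X=G/K$: by Lemma \ref{lem:quotientspace}, since $K$ is infinite, we have $\|\psi\|_{MS^p(L^2(X))}=\|\check\varphi\|_{MS^p(L^2(G))}$, where $\psi(ge_0,he_0)=\varphi(g^{-1}h)$ is the function from Lemma \ref{lem:psiX}. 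So it suffices to produce, acting on $S^p(L^2(X))$, a test operator $T$ whose image under the Schur multiplier $M_\psi$ has $S^p$-norm at least the left-hand side.

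The key step is to choose the right test operator using the Peter--Weyl decomposition $L^2(X)=\bigoplus_{\pi\in\hat G_K}\mathcal{H}_\pi$ from Lemma \ref{lem:peterweylspecial}. I would take $T=\id$ (or a suitable finite truncation thereof), viewed as an element of $S^2(L^2(X))$, whose integral kernel is the reproducing kernel, i.e.\ the kernel $k(x,y)$ representing the identity on $L^2(X)$ decomposed along the $\mathcal{H}_\pi$. The point of Lemma \ref{lem:peterweylspecial} is precisely that the Schur multiplier $M_\psi$ acts on this decomposition by scaling the $\pi$-block by the spherical-function coefficient: since $\varphi=\sum_\pi c_\pi(\dim\mathcal{H}_\pi)h_\pi$ with $c_\pi=\langle\varphi,h_\pi\rangle$, multiplying the kernel by $\psi$ produces the operator $\bigoplus_\pi c_\pi P_\pi$, where $P_\pi$ is the orthogonal projection of $L^2(X)$ onto $\mathcal{H}_\pi$. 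This is a diagonal operator in the block decomposition, and its $S^p$-norm is computed directly: each block contributes $|c_\pi|^p\Tr(P_\pi)=|c_\pi|^p\dim\mathcal{H}_\pi$, so that $\|M_\psi(T)\|_p=\left(\sum_\pi|c_\pi|^p\dim\mathcal{H}_\pi\right)^{1/p}$.

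To turn this into the stated inequality I would normalize $T$ to have unit $S^p$-norm and compare. A clean way is to exploit the duality $1/p+1/p'=1$ and pair $M_\psi(T)$ against a well-chosen dual element, or more simply to observe that taking $T$ itself to be the (truncated) identity makes $\|T\|_{S^{p'}}$ computable and then apply $\|M_\psi(T)\|_{S^p}\leq\|M_\psi\|\,\|T\|_{S^p}$ in the right direction. The cleanest route is: the map $M_\psi$ has norm $\|\psi\|_{MS^p(L^2(X))}$, and applying it to the normalized reproducing kernel yields an operator whose $S^p$-norm is exactly the left-hand side after dividing by the $S^p$-norm of the test kernel; since the blocks are orthogonal and $M_\psi$ acts blockwise by the scalars $c_\pi$, the ratio collapses to $\left(\sum_\pi|c_\pi|^p\dim\mathcal{H}_\pi\right)^{1/p}$, bounded above by $\|M_\psi\|=\|\check\varphi\|_{MS^p(L^2(G))}$.

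The main obstacle I anticipate is the convergence and truncation bookkeeping when $\hat G_K$ is infinite: the identity operator is not in $S^p$ for $p<\infty$, so one must work with finite partial sums $\sum_{\pi\in F}P_\pi$ over finite subsets $F\subset\hat G_K$, verify that $M_\psi$ genuinely acts as the scalar $c_\pi$ on each block (which requires that $\psi k$ is again a legitimate Hilbert--Schmidt kernel representing $\bigoplus_{\pi\in F}c_\pi P_\pi$, using Definition \ref{dfn:multiplier} on the intersection $S^p\cap S^2$), and then let $F$ exhaust $\hat G_K$ and appeal to monotone convergence for the $\ell^p$-sum. A secondary technical point is confirming that $M_\psi$ maps the truncated identity, which lies in $S^p\cap S^2$, back into $S^p$ with the claimed block-diagonal form; this is exactly where the orthogonality of the Peter--Weyl decomposition and the reproducing-kernel identity for each $P_\pi$ must be invoked carefully.
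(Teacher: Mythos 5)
There is a genuine gap, and it sits at the heart of your argument: the claim that the Schur multiplier $M_\psi$ acts on the (truncated) reproducing kernel of the identity ``by scaling the $\pi$-block by $c_\pi$'' is false. A Schur multiplier multiplies kernels pointwise: if $k_F(x,y)=\sum_{\pi\in F}\dim\mathcal{H}_\pi\,h'_\pi(x,y)$ is the kernel of $\sum_{\pi\in F}P_{\mathcal{H}_\pi}$, then $M_\psi(T_{k_F})=T_{\psi k_F}$, and the pointwise product $\psi\,k_F=\bigl(\sum_\sigma c_\sigma\dim\mathcal{H}_\sigma h'_\sigma\bigr)\bigl(\sum_{\pi\in F}\dim\mathcal{H}_\pi h'_\pi\bigr)$ involves products of spherical functions, which relinearize into sums over \emph{other} spherical functions; the resulting operator is not $\bigoplus_{\pi\in F}c_\pi P_{\mathcal{H}_\pi}$. (In the extreme case $T=\id$ the kernel is a delta distribution and formally $\psi\cdot\delta(x-y)=\varphi(e)\,\delta(x-y)$, again not the block-diagonal operator you want.) So the quantity $\bigl(\sum_\pi|c_\pi|^p\dim\mathcal{H}_\pi\bigr)^{1/p}$ does not come out of your test operator, and the truncation/monotone-convergence bookkeeping you describe is repairing the wrong difficulty.

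The fix, which is exactly what the paper does, is to take as test operator not the identity but the operator $T_1$ with kernel constantly equal to $1$, i.e.\ the rank-one projection of $L^2(X)$ onto the constants. Then $\|T_1\|_{S^p(L^2(X))}=1$ for every $p$, no truncation is needed since $T_1\in S^p\cap S^2$, and multiplying the constant kernel by $\psi$ does nothing but return $\psi$ itself: $M_\psi(T_1)=T_\psi$. Now the block-diagonal structure comes not from any diagonal action of $M_\psi$ but from the Peter--Weyl expansion of the kernel $\psi$ itself: $\psi=\sum_\pi c_\pi\dim\mathcal{H}_\pi h'_\pi$ and $\dim\mathcal{H}_\pi\,T_{h'_\pi}=P_{\mathcal{H}_\pi}$ (Theorem V.4.3 of Helgason), so $T_\psi=\sum_\pi c_\pi P_{\mathcal{H}_\pi}$ and $\|T_\psi\|_{S^p}^p=\sum_\pi|c_\pi|^p\dim\mathcal{H}_\pi$. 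Combined with $\|T_\psi\|_{S^p}=\|M_\psi(T_1)\|_{S^p}\leq\|\psi\|_{MS^p(L^2(X))}\|T_1\|_{S^p}$ and your (correct) reduction from $L^2(G)$ to $L^2(X)$ via Lemma \ref{lem:quotientspace}, this gives the stated inequality. Your overall strategy --- pass to $X$, hit a norm-controlled test operator with $M_\psi$, and compute the $S^p$-norm of a block-diagonal output --- is the right one; only the choice of test operator and the mechanism producing the block-diagonal operator need to be corrected.
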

\begin{proof}
As before, let $(T_kf)(x)=\int_G k(x,y)f(y)dy$. Then $T_1$ is the projection on $\bbC 1 \in L^2(X)$. It follows that $\|T_1\|_{S^p(L^2(X))}=1$. It is sufficient to prove that $(\sum_{\pi \in \hat{G}_K} |c_{\pi}|^p (\dim{\mathcal{H}_{\pi}}))^{\frac{1}{p}} \leq \|T_{\psi}\|_{S^p(L^2(X))}$, where $\psi$ is as before. Indeed, we have $\|T_{\psi}\|_{S^p(L^2(X))} = \frac{\|T_{\psi}\|_{S^p(L^2(X))}}{\|T_1\|_{S^p(L^2(X))}} \leq \|\psi\|_{MS^p(L^2(X))}$, which is smaller than or equal to $\|\psi\|_{cbMS^p(L^2(X))}=\|\check{\varphi}\|_{MS^p(L^2(G))}$ by Lemma \ref{lem:quotientspace} under the assumption that $K$ is an infinite group.

By Lemma \ref{lem:peterweylspecial}, we have $\varphi=\sum_{\pi \in \hat{G}_K} c_{\pi} \dim{\mathcal{H}_{\pi}} h_{\pi}$. By \cite[Theorem V.4.3]{helgasongroupsanalysis}, it follows that the operator $P_{\mathcal{H}_{\pi}}=\dim{\mathcal{H}}_{\pi}T_{h^{\prime}_{\pi}}$ is the projection onto $\mathcal{H}_{\pi}$, where $h^{\prime}_{\pi}:X \times X \lra \bbC$ denotes the function induced by $h_{\pi}$ (see Lemma \ref{lem:psiX}). Since $L^2(X)$ decomposes as a direct sum of Hilbert spaces, we have
\begin{equation} \nonumber
\begin{split}
  \left\|T_{\psi}\right\|_{S^p(L^2(X))}^p &= \left\|\sum_{\pi \in \hat{G}_K} c_{\pi} \dim{\mathcal{H}_{\pi}} T_{h^{\prime}_{\pi}}\right\|_{S^p(L^2(X))}^p \\
    &= \sum_{\pi \in \hat{G}_K} |c_{\pi}|^p \Tr(|P_{\mathcal{H}_{\pi}}|^p)=\sum_{\pi \in \hat{G}_K} |c_{\pi}|^p \dim{\mathcal{H}_{\pi}}.
\end{split}
\end{equation}
\end{proof}
\begin{lem} \label{lem:averaging}
  Let $G$ be a locally compact group with compact subgroup $K$. For $p \in [1,\infty]$, let $\varphi \in C(G)$ be such that $\check{\varphi} \in MS^p(L^2(G))$. Then the continuous function $\varphi^K$ defined by $\varphi^K(g)=\int_K\int_K \varphi(kgk^{\prime})dkdk^{\prime}$ induces an element $\check{\varphi}^K$ of $MS^p(L^2(G))$, and $\|\check{\varphi}^K\|_{MS^p(L^2(G))} \leq \|\check{\varphi}\|_{MS^p(L^2(G))}$. The analogous statement holds in the completely bounded case.
\end{lem}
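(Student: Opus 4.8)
The plan is to reduce everything to finite sets via the finite characterisation of Lemma \ref{lem:finitecharacterization} and then exploit that averaging over the compact group $K \times K$ cannot increase the multiplier norm. First observe that, since $\check{\varphi} \in MS^p(L^2(G)) \subseteq L^{\infty}(G \times G)$, the function $\varphi$ is bounded; as it is also continuous, the average $\varphi^K$ is a well-defined bounded continuous $K$-bi-invariant function, and $\check{\varphi}^K(g,h):=\varphi^K(g^{-1}h)$ is bounded and continuous on $G \times G$. Writing $\varphi(kg^{-1}hk')=\varphi((gk^{-1})^{-1}(hk'))$ gives the basic identity
\[
  \check{\varphi}^K(g,h)=\int_K\int_K\check{\varphi}(gk^{-1},hk')\,dk\,dk',
\]
which displays $\check{\varphi}^K$ as an average, over the probability space $K \times K$, of the point-translated symbols $\psi_{k,k'}(g,h):=\check{\varphi}(gk^{-1},hk')$.

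By Lemma \ref{lem:finitecharacterization} (whose hypotheses hold whenever Haar measure is $\sigma$-finite of full support, e.g.\ for second countable $G$), it suffices to bound, for each finite set $F=\{g_1,\dots,g_n\} \subset G$, the Schur multiplier norm on $S_n^p$ of the matrix $A:=(\check{\varphi}^K(g_i,g_j))_{i,j}=\int_K\int_K B_{k,k'}\,dk\,dk'$, where $B_{k,k'}=(\check{\varphi}(g_ik^{-1},g_jk'))_{i,j}$. The central step is the estimate $\|M_{B_{k,k'}}\|_{S_n^p} \le C:=\|\check{\varphi}\|_{MS^p(L^2(G))}$ for every $(k,k')$. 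I would obtain this by regarding $B_{k,k'}$ as a submatrix of the matrix of $\check{\varphi}$ on the finite set $F'=\{g_1k^{-1},\dots,g_nk^{-1}\} \cup \{g_1k',\dots,g_nk'\} \subset G$: by Lemma \ref{lem:finitecharacterization} applied to $\check{\varphi}$ this larger matrix is a Schur multiplier of norm $\le C$, and selecting the rows indexed by the points $g_ik^{-1}$ and the columns indexed by the points $g_jk'$ is a compression, which does not increase the norm. Concretely, the embedding $S_n^p \hookrightarrow S^p(\ell^2(F'))$ sending an $n \times n$ matrix to the block placed at these row/column positions is isometric (being of the form $x \mapsto PxQ^*$ for the isometries $P,Q$ determined by the two injective labelings), and it intertwines $M_{B_{k,k'}}$ with the restriction of the larger multiplier, giving the bound.

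Finally, since all of this takes place in finite dimensions, averaging is immediate: linearity of $B \mapsto M_B$ in the symbol together with the triangle inequality for the norm-integral yields
\[
  \|M_A\|_{S_n^p} \le \int_K\int_K \|M_{B_{k,k'}}\|_{S_n^p}\,dk\,dk' \le C,
\]
using that $dk$ is normalized. As the finite set $F$ was arbitrary, Lemma \ref{lem:finitecharacterization} gives $\|\check{\varphi}^K\|_{MS^p(L^2(G))} \le \|\check{\varphi}\|_{MS^p(L^2(G))}$; the completely bounded case is identical, invoking the completely bounded version of Lemma \ref{lem:finitecharacterization} and noting that compressions and convex averages do not increase the completely bounded norm either. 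The main obstacle is the compression step: one must verify both that a submatrix of a Schur multiplier is again a Schur multiplier of no larger norm and that the isometric embedding above is unaffected by possible coincidences among the translated points $g_ik^{-1}$ and $g_jk'$. A more conceptual alternative, valid for arbitrary locally compact $G$, avoids Lemma \ref{lem:finitecharacterization} entirely: since the modular function is trivial on the compact subgroup $K$, right translations by $K$ are unitary on $L^2(G)$, so each $M_{\psi_{k,k'}}$ is a conjugate of $M_{\check{\varphi}}$ by a complete isometry $T \mapsto \rho(k)^*T\rho(k'^{-1})$ of $S^p(L^2(G))$, and the average is carried out directly on $S^p$, at the cost of justifying the corresponding Bochner integral.
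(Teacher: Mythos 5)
Your proof is correct and rests on the same two ingredients as the paper's: the finite-set characterization of Lemma \ref{lem:finitecharacterization} and the fact that the two-sided translates $(g,h)\mapsto\check{\varphi}(gk^{-1},hk')$ have the same multiplier norm as $\check{\varphi}$ (which the paper dismisses as ``one easily checks'' and you justify carefully via the compression/relabeling argument). The only cosmetic difference is that you integrate the finite matrices exactly over $K\times K$ in finite dimensions, whereas the paper approximates the Haar measure by finitely supported probability measures and passes to the pointwise limit of convex combinations of translates.
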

\begin{proof}
  Let $\nu_n$ be a sequence of finitely supported probability measures on $K$ pointwise converging to the Haar measure $\mu$. Let $\varphi_n:G \lra \bbC$ be defined by $\varphi_n(g)=\int_K\int_K \varphi(kgk^{\prime})d\nu_n(k)d\nu_n(k^{\prime})$. Each $\varphi_n$ is a convex combination of functions $_{k}\varphi_{k^{\prime}}$ of the form $_{k}\varphi_{k^{\prime}}(g)=\varphi(kgk^{\prime})$, where $k,k^{\prime} \in K$ are fixed. Hence, $\varphi^K$ is an element of the pointwise closure of $\conv\{_{k}\varphi_{k^{\prime}} \mid k,k^{\prime} \in K\}$. One easily checks that for all $k,k^{\prime} \in K$, we have $\|_{k}\check{\varphi}_{k^{\prime}}\|_{MS^p(L^2(G))} = \|\check{\varphi}\|_{MS^p(L^2(G))}$. Hence, by Lemma \ref{lem:finitecharacterization}, we have $\check{\varphi}^K \in MS^p(L^2(G))$, and $\|\check{\varphi}^K\|_{MS^p(L^2(G))} \leq \|\check{\varphi}\|_{MS^p(L^2(G))}$. The result for the completely bounded case follows in an analogous way.
\end{proof}

\subsection{The property $\apschur$} \label{sec:apschur}
In this section we recall the definition of the $\apschur$, as given by Lafforgue and de la Salle in \cite{ldls}. First, recall that the Fourier algebra $A(G)$ (see \cite{eymard}) consists of the coefficients of the left-regular representation of $G$. More precisely, $\varphi \in A(G)$ if and only if there exist $\xi,\eta \in L^2(G)$ such that for all $x \in G$ we have $\varphi(x)=\langle \lambda(x)\xi,\eta \rangle$. With the norm $\|\varphi\|_{A(G)}=\min \{ \|\xi\|\|\eta\| \mid \forall x \in G \; \varphi(x)=\langle \lambda(x)\xi,\eta \rangle \}$, it is a Banach space.
\begin{dfn}(\cite[Definition 2.2]{ldls}) Let $G$ be a locally compact Hausdorff second countable group, and let $1 \leq p \leq \infty$. The group $G$ is said to have the property of completely bounded approximation by Schur multipliers on $S^p$, denoted $\apschur$, if there exists a constant $C > 0$ and a net $\varphi_{\alpha} \in A(G)$ such that $\varphi_{\alpha} \to 1$ uniformly on compacta and $\sup_{\alpha} \|\check{\varphi}_{\alpha}\|_{cbMS^p(L^2(G))} \leq C$. The infimum of these $C$'s is denoted by $\lambdaapschur (G)$.
\end{dfn}
The following result is a key property of the $\apschur$ (see \cite[Theorem 2.5]{ldls}).
\begin{thm} \label{thm:apschurlattices}
  Let $G$ be a locally compact Hausdorff group, and let $\Gamma$ be a lattice in $G$. Then for $1 \leq p \leq \infty$, we have $\Lambda_{p,\mathrm{cb}}^{\mathrm{Schur}}(\Gamma)=\Lambda_{p,\mathrm{cb}}^{\mathrm{Schur}}(G)$.
\end{thm}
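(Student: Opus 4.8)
The plan is to prove the two inequalities $\lambdaapschur(\Gamma) \le \lambdaapschur(G)$ and $\lambdaapschur(G) \le \lambdaapschur(\Gamma)$ separately, the first by restriction and the second by induction. Throughout I would fix a Borel fundamental domain $\Omega$ for the (right) action of $\Gamma$ on $G$, so that $G = \bigsqcup_{\gamma \in \Gamma}\Omega\gamma$ with $0 < |\Omega| < \infty$, identify $L^2(G)$ with $L^2(\Omega) \otimes \ell^2(\Gamma)$ via $(\omega,\gamma)\mapsto\omega\gamma$, and write $g = \omega(g)\gamma(g)$ for the corresponding coordinates. (Existence of a lattice forces $G$ to be unimodular, which I would use freely.)

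For $\lambdaapschur(\Gamma)\le\lambdaapschur(G)$, I would start from a net $\varphi_\alpha \in A(G)$ witnessing $\lambdaapschur(G) \le C$ and restrict it to $\Gamma$. By the Herz restriction theorem, $\varphi_\alpha|_\Gamma \in A(\Gamma)$, and since $\Gamma$ is discrete its compacta are finite sets, so $\varphi_\alpha|_\Gamma \to 1$ uniformly on compacta is immediate from uniform convergence on compacta of $G$. For the multiplier norm I would invoke Lemma \ref{lem:finitecharacterization}: the value $\|\check{(\varphi_\alpha|_\Gamma)}\|_{cbMS^p(\ell^2(\Gamma))}$ is the supremum of the cb Schur norms of the matrices $(\varphi_\alpha(\gamma_i^{-1}\gamma_j))$ over finite $F \subset \Gamma$, and each such $F$ is also a finite subset of $G$, so this supremum is dominated by $\|\check\varphi_\alpha\|_{cbMS^p(L^2(G))}$. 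Taking the infimum over witnessing nets gives the inequality.

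The substantial direction is $\lambdaapschur(G) \le \lambdaapschur(\Gamma)$, where I must manufacture a net on $G$ out of a net $\psi_\alpha \in A(\Gamma)$ with $\sup_\alpha\|\check\psi_\alpha\|_{cbMS^p(\ell^2(\Gamma))}\le C$ and $\psi_\alpha \to 1$ pointwise. The key transfer tool is the inflation statement of Lemma \ref{lem:schurbcb}: the symbol $\Psi(g,h)=\psi_\alpha(\gamma(g)^{-1}\gamma(h))$ on $L^2(\Omega)\otimes\ell^2(\Gamma)\cong L^2(G)$ is obtained from $\check\psi_\alpha$ by adjoining the dummy variable running over $\Omega$, so $\|\Psi\|_{cbMS^p(L^2(G))}=\|\check\psi_\alpha\|_{cbMS^p(\ell^2(\Gamma))}$. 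The difficulty is that $\Psi$ is not left-$G$-invariant, hence not of the form $\check\varphi$ required by $\apschur$. I would therefore pass to the averaged candidate
\[
  \varphi_\alpha(x)=\frac{1}{|\Omega|}\int_\Omega \psi_\alpha\bigl(\gamma(\omega x)\bigr)\,d\omega,
\]
for which $\varphi_\alpha(\gamma)$ recovers $\psi_\alpha(\gamma)$ at $\gamma\in\Gamma$ (up to the tiling conventions), and verify, using the identification above, that $\varphi_\alpha \in A(G)$ and that this averaging does not increase the cb Schur norm, so that $\|\check\varphi_\alpha\|_{cbMS^p(L^2(G))}\le\|\check\psi_\alpha\|_{cbMS^p(\ell^2(\Gamma))}$. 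This norm non-increase I would extract from convexity together with Lemma \ref{lem:finitecharacterization}, in exactly the spirit of the averaging argument in Lemma \ref{lem:averaging}.

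The hardest point will be the convergence $\varphi_\alpha \to 1$ uniformly on compacta while keeping the constant sharp. When $\Gamma$ is cocompact, $\Omega$ is relatively compact, the $\Gamma$-coordinates $\gamma(\omega x)$ range over a finite set as $x$ runs over a compact set, and the convergence follows at once from uniform convergence of $\psi_\alpha$ on finite sets. For a non-uniform lattice $\Omega$ is non-compact, so infinitely many $\Gamma$-coordinates contribute and the naive argument fails; here I would exploit the finiteness of the invariant measure on $G/\Gamma$ to show that the mass coming from the non-compact part of $\Omega$ is uniformly small, and combine this with an $\epsilon$-truncation reducing to compactly supported $\psi_\alpha$. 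This measure-finiteness, which is precisely the defining property of a lattice, is what forces both inequalities to close with the same constant and yields the claimed equality $\lambdaapschur(\Gamma)=\lambdaapschur(G)$.
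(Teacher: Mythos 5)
The paper itself gives no proof of Theorem \ref{thm:apschurlattices}: it is quoted from \cite[Theorem 2.5]{ldls}. The proof there has exactly the two-part shape you propose (restriction to the closed subgroup for $\lambdaapschur(\Gamma)\le\lambdaapschur(G)$, induction over a finite-volume fundamental domain for the converse), so your architecture is the standard one. Your restriction half is correct as written. Your convergence discussion is also sound, and simpler than you fear: writing $\varphi_\alpha(x)=\sum_{\delta\in\Gamma}\psi_\alpha(\delta)m_\delta(x)$ with weights $m_\delta(x)\geq 0$ that are continuous in $x$ and sum to $1$ (unimodularity), Dini's theorem applied to $x\mapsto\sum_{\delta\in F}m_\delta(x)$ gives uniform tightness on compacta; combined with $\sup_\alpha\|\psi_\alpha\|_\infty\leq C$ and pointwise convergence on $\Gamma$, this handles uniform and non-uniform lattices alike, with no truncation of the $\psi_\alpha$.

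The genuine gap is in the cb-norm estimate of the induction step, and it is a left/right mismatch that breaks precisely the convexity argument you invoke. Your tiling $G=\bigsqcup_{\gamma}\Omega\gamma$ makes $\gamma(\cdot)$ right-equivariant, $\gamma(u\delta)=\gamma(u)\delta$, whereas the symbol you must control, $\check{\varphi}_\alpha(g,h)=\varphi_\alpha(g^{-1}h)$, is invariant under simultaneous \emph{left} translation; these do not pair. Concretely, the identity $\gamma(\omega g^{-1}h)=\gamma(\omega g)^{-1}\gamma(\omega h)$ is false (already for $G=\bbR$), and the relabelled inflations $(g,h)\mapsto\psi_\alpha(\gamma(gt)^{-1}\gamma(ht))$ transform under $t\mapsto t\delta$ by conjugation of the argument rather than invariance, so "averaging them over $\Omega$" has no invariant meaning and never produces a function of $g^{-1}h$. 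Conversely, your actual $\check{\varphi}_\alpha$ expands with $\omega$-integrands $(g,h)\mapsto\psi_\alpha(\gamma(\omega g^{-1}h))$ that are not norm-controlled symbols at all: for $G=\bbR$, $\Gamma=\mathbb{Z}$, $\Omega=[0,1)$, $\psi_\alpha=\delta_0$ (so $\|\check{\psi}_\alpha\|_{cbMS^p(\ell^2(\mathbb{Z}))}=1$), each integrand is a band indicator $1[0\leq \omega+h-g<1]$, whose discretizations along arithmetic progressions of mesh $\epsilon$ are band matrices of Schur norm $\asymp\log(1/\epsilon)$; thus for $p=\infty$ (a case the theorem covers --- it is the weak amenability statement) the constituents are unbounded, and for $1<p<\infty$ their norm is the triangular-truncation constant of $S^p$, so at best you would lose a constant and never obtain the claimed equality. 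The repair is local: keep your formula but tile by \emph{left} cosets, $G=\bigsqcup_{\gamma}\gamma\Omega$ with $\gamma(\delta u)=\delta\gamma(u)$. Then
\[
  \check{\varphi}_\alpha(g,h)=\frac{1}{|\Omega|}\int_{\Gamma\backslash G}\psi_\alpha\bigl(\gamma(tg)^{-1}\gamma(th)\bigr)\,d\dot{t},
\]
a genuine average over the probability space $\Gamma\backslash G$ of the symbols $\Psi\circ(L_t\times L_t)$ ($L_t$ = left translation), each of cb norm exactly $\|\check{\psi}_\alpha\|_{cbMS^p(\ell^2(\Gamma))}$ by Lemma \ref{lem:schurbcb}, whence the bound follows by convexity and Lemma \ref{lem:finitecharacterization}; moreover $\varphi_\alpha$ is then visibly a coefficient of the regular representation of $G$ (take $\xi\otimes|\Omega|^{-1/2}1_\Omega$-type vectors), so $\varphi_\alpha\in A(G)$. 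With this orientation the exact restriction $\varphi_\alpha|_\Gamma=\psi_\alpha$ you advertise is lost, but, as noted above, it is never needed.
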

Lafforgue and de la Salle also proved that for a discrete group $\Gamma$ and $p \in (1,\infty)$, it follows that $\Lambda_{p,\mathrm{cb}}^{\mathrm{Schur}}(\Gamma) \in \{1,\infty\}$. Since a semisimple Lie group $G$ has lattices \cite{borelharishchandra}, we conclude by the above proposition that for such a group, it also follows that $\Lambda_{p,\mathrm{cb}}^{\mathrm{Schur}}(G) \in \{1,\infty\}$ for $p \in (1,\infty)$.
\begin{prp} \label{prp:apschurproperties}
Let $G$ be a locally compact Hausdorff group. The $\apschur$ satisfies the following properties:
\begin{enumerate}
 \item for $p=\infty$ (or $p=1$, by the third statement of this proposition), the group $G$ has the $\apschur$ if and only if it is weakly amenable, and $\lambdaapschur (G)=\Lambda(G)$, where $\Lambda(G)$ denotes the Cowling-Haagerup constant of $G$;
 \item for every locally compact group, $\Lambda_{2,\mathrm{cb}}^{\mathrm{Schur}}(G)=1$;
 \item if $p,q \in [1,\infty]$ such that $\frac{1}{p}+\frac{1}{q}=1$, then $\Lambda_{p,\mathrm{cb}}^{\mathrm{Schur}}(G)=\Lambda_{q,\mathrm{cb}}^{\mathrm{Schur}}(G)$;
 \item if $2 \leq p \leq q \leq \infty$, then $\Lambda_{p,\mathrm{cb}}^{\mathrm{Schur}}(G) \leq \Lambda_{q,\mathrm{cb}}^{\mathrm{Schur}}(G)$;
 \item if $H$ is a closed subgroup of $G$ and $1 \leq p \leq \infty$, then $\Lambda_{p,\mathrm{cb}}^{\mathrm{Schur}}(H) \leq \Lambda_{p,\mathrm{cb}}^{\mathrm{Schur}}(G)$;
 \item if $G$ has a compact subgroup $K$, and if $\varphi_{\alpha}$ is a net in $A(G)$ converging to $1$ uniformly on compacta such that $\sup_{\alpha} \|\check{\varphi}_{\alpha}\|_{cbMS^p(L^2(G))} \leq C$, then there exists a net $\tilde{\varphi}_{\alpha}$ in $A(G) \cap C(K \backslash G \slash K)$ such that $\sup_{\alpha} \|\check{\tilde{\varphi}}_{\alpha}\|_{cbMS^p(L^2(G))} \leq C$ that converges to $1$ uniformly on compacta.
 \item if $K$ is a compact normal subgroup of $G$ and $1 \leq p \leq \infty$, then $\Lambda_{p,\mathrm{cb}}^{\mathrm{Schur}}(G)=\Lambda_{p,\mathrm{cb}}^{\mathrm{Schur}}(G \slash K)$;
 \item if $G_1$ and $G_2$ are locally isomorphic connected (semi)simple Lie groups with finite centers, then for $p \in [1,\infty]$, we have $\Lambda_{p,\mathrm{cb}}^{\mathrm{Schur}}(G_1)=\Lambda_{p,\mathrm{cb}}^{\mathrm{Schur}}(G_2)$;
\end{enumerate}
\end{prp}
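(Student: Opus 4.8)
The plan is to establish the eight items largely independently, obtaining~(2)--(5) from the general properties of the Schatten multiplier norm recorded in Section~\ref{sec:smosc}, deducing~(6) from Lemma~\ref{lem:averaging}, proving~(7) by combining~(6) with Lemma~\ref{lem:schurbcb}, and reducing~(8) to~(7). I would dispatch~(2), (3) and~(4) first, since for a fixed symbol the relevant facts hold already at the level of the multiplier norm. For~(3) and~(4) the identities $\|\check\varphi_\alpha\|_{cbMS^p(L^2(G))}=\|\check\varphi_\alpha\|_{cbMS^q(L^2(G))}$ (for $\tfrac1p+\tfrac1q=1$) and the monotonicity $\|\check\varphi_\alpha\|_{cbMS^p(L^2(G))}\le\|\check\varphi_\alpha\|_{cbMS^q(L^2(G))}$ (for $2\le p\le q$) from Section~\ref{sec:smosc} apply to each member of an approximating net, so the same net witnesses both constants and the desired (in)equalities follow. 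For~(2) I would use that on $S^2$ a Schur multiplier acts by pointwise multiplication, so that Lemma~\ref{lem:schurbcb} gives $\|\check\varphi\|_{cbMS^2(L^2(G))}=\|\varphi\|_\infty$; since there is always a net $u_\alpha\in A(G)$ with $0\le u_\alpha\le1$ converging to $1$ uniformly on compacta (for instance the normalized convolutions $|V|^{-1}\langle\lambda(\cdot)\mathbf{1}_V,\mathbf{1}_{CV}\rangle$ for increasing compacta $C$), and since $\|\check\varphi_\alpha\|_{cbMS^p(L^2(G))}\ge\|\varphi_\alpha\|_\infty\ge1-o(1)$ along any admissible net, the $p=2$ constant equals~$1$. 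For item~(1), the only genuine input is the identification of $\|\check\varphi\|_{cbMS^\infty(L^2(G))}$ with the completely bounded Fourier (Herz--Schur) multiplier norm $\|\varphi\|_{M_0A(G)}$ (Bo\.zejko--Fendler); granting this, the definition of the $\apschur$ at $p=\infty$ coincides verbatim with the definition of weak amenability, so $\lambdaapschur(G)=\Lambda(G)$, and the case $p=1$ follows from~(3).

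Items~(5) and~(6) are restriction and averaging. For~(5) I would restrict each $\varphi_\alpha$ to the closed subgroup $H$: by Herz's restriction theorem $\varphi_\alpha|_H\in A(H)$ and it still converges to $1$ uniformly on compacta of $H$, while Lemma~\ref{lem:finitecharacterization} gives $\|\check{(\varphi_\alpha|_H)}\|_{cbMS^p(L^2(H))}\le\|\check\varphi_\alpha\|_{cbMS^p(L^2(G))}$, since the completely bounded multiplier norm is determined by finite subsets and every finite subset of $H$ is a finite subset of $G$. Item~(6) is essentially Lemma~\ref{lem:averaging}: setting $\tilde\varphi_\alpha=\varphi_\alpha^K$ keeps us inside $A(G)\cap C(K\backslash G\slash K)$ (the average is an integral of translates of $\varphi_\alpha$, which remains in $A(G)$), bounds $\|\check{\tilde\varphi}_\alpha\|_{cbMS^p(L^2(G))}$ by $C$, and preserves uniform-on-compacta convergence to $1$, because for $g$ in a compact set the double coset $KgK$ is compact and $\varphi_\alpha\to1$ uniformly there.

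The step carrying the real weight is~(7), and I expect its clean execution to be the main obstacle. With $K$ compact and normal I would first invoke~(6) to assume the approximating net for $G$ is $K$-bi-invariant; since $K$ is normal, each $\tilde\varphi_\alpha$ then factors through $G\slash K$, yielding functions $\bar\varphi_\alpha\in A(G\slash K)$ converging to $1$ uniformly on compacta. The comparison of norms is the delicate point: following the proof of Lemma~\ref{lem:quotientspace}, I would fix a Borel section of $G\to G\slash K$ to write $G\cong Y\times K$ as a measure space and apply Lemma~\ref{lem:schurbcb} with $\Omega=K$; since $\check{\tilde\varphi}_\alpha$ is constant in the $K$-variables, its $cbMS^p$-norm on $L^2(G)$ equals that of its descent $\check{\bar\varphi}_\alpha$ on $L^2(G\slash K)$. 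This gives $\lambdaapschur(G\slash K)\le\lambdaapschur(G)$; the reverse inequality follows by pulling an approximating net on $G\slash K$ back along the quotient map to a $K$-bi-invariant net on $G$ and applying the same norm identity. The care needed in identifying $A(G\slash K)$ with the $K$-invariant part of $A(G)$ and in transporting the multiplier norm through the product decomposition is exactly where the argument must be carried out precisely.

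Finally, item~(8) reduces to~(7). Locally isomorphic $G_1,G_2$ share a universal covering group $\tilde G$ with Lie algebra $\mathfrak g$, and $G_i=\tilde G\slash Z_i$ for discrete central subgroups $Z_i\le Z(\tilde G)$. Finiteness of the centers of $G_1$ and $G_2$ forces each $Z_i$ to have finite index in $Z(\tilde G)$, so $Z_3:=Z_1\cap Z_2$ also has finite index; then $Z_i\slash Z_3$ is a finite, hence compact, normal subgroup of $\tilde G\slash Z_3$ with $(\tilde G\slash Z_3)\slash(Z_i\slash Z_3)\cong G_i$. Applying~(7) twice yields $\lambdaapschur(G_1)=\lambdaapschur(\tilde G\slash Z_3)=\lambdaapschur(G_2)$.
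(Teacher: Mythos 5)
Your proposal is correct and follows essentially the same route as the paper: items (2)--(5) are precisely the facts from \cite{ldls} that the paper cites (your sketches of them match that source), item (6) is Lemma \ref{lem:averaging}, and item (7) combines (6) with the Borel-section/Fubini argument of Lemma \ref{lem:quotientspace}, exactly as in the text. The only divergence is item (8), where you pass to a common quotient of the universal cover by the finite-index intersection $Z_1\cap Z_2$ of central subgroups, whereas the paper applies (7) directly to the finite centers and uses that the adjoint groups $G_1/Z(G_1)\cong G_2/Z(G_2)$ are isomorphic; both reductions are valid and of comparable length.
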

\begin{proof}
  The first statement is clear. The second through the fifth statement are covered in \cite[Section 2]{ldls}. The sixth statement follows from Lemma \ref{lem:averaging}. By combining the sixth statement and Lemma \ref{lem:quotientspace}, the seventh statement follows. The fact that the net on the group converges uniformly on compacta if and only if the net on the quotient does, is straightforward (see \cite{cowlinghaagerup}). For the eighth statement, note that the center is a normal subgroup of a group. Using the seventh statement and the fact that the adjoint groups $G_1 \slash Z(G_1)$ and $G_2 \slash Z(G_2)$, where $Z(G_i)$ denotes the center of $G_i$, are isomorphic, we obtain the result.
\end{proof}

\subsection{Approximation properties for noncommutative $L^p$-spaces}
The operator space structure on a noncommutative $L^p$-space $L^p(M,\tau)$ can be obtained by considering this space as a certain interpolation space (see \cite{kosaki}). Indeed, the pair of spaces $(M,L^1(M,\tau))$ becomes a compatible couple of operator spaces, and for $1 < p < \infty$ we have the isometry $L^p(M,\tau) \cong [M,L^1(M,\tau)]_{\frac{1}{p}}$. By \cite[Lemma 1.7]{pisiernoncomvvlpsp}, we know that for a linear map $T:L^p(M,\tau) \lra L^p(M,\tau)$, its completely bounded norm $\|T\|_{cb}$ corresponds to $\sup_{n \in \mathbb{N}} \|\id_{S_n^p} \otimes\; T:S^p_n[L^p(M)] \lra S^p_n[L^p(M)]\|$. Using \cite[Corollary 1.4]{pisiernoncomvvlpsp} and the fact that $S^1_n \otimes L^1(M)=L^1(M \otimes M_n)$, we obtain that $S^p_n[L^p(M)]=L^p(M \otimes M_n)$, which implies that $\|T\|_{cb}=\sup_{n \in \mathbb{N}}\|T \otimes \id:L^p(M \otimes M_n) \lra L^p(M \otimes M_n)\|$.

In Section \ref{sec:introduction} of this article, we recalled the definition of the CBAP, CCAP and OAP. It was shown by Junge and Ruan \cite{jungeruan} that if $\Gamma$ is a discrete group with the AP (of Haagerup and Kraus), and if $p \in (1,\infty)$, then $L^p(L(\Gamma))$ has the OAP, where $L(\Gamma)$ denotes the group von Neumann algebra of $\Gamma$. Lafforgue and de la Salle related the AP for groups and the OAP for noncommutative $L^p$-spaces to the $\apschur$.
\begin{lem} (\cite[Corollary 3.12]{ldls}) \label{lem:apapschur}
  If $\Gamma$ is a countable discrete group with the AP, and if $p \in (1,\infty)$, then $\Lambda_{p,\mathrm{cb}}^{\mathrm{Schur}}(\Gamma)=1$.
\end{lem}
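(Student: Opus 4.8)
The plan is to deduce finiteness of $\lambdaapschur(\Gamma)$ and then invoke the dichotomy recorded above, namely that for a discrete group and $p \in (1,\infty)$ one has $\lambdaapschur(\Gamma) \in \{1,\infty\}$. Thus it suffices to exhibit a single finite constant $C$ and a net $\varphi_\alpha \in A(\Gamma)$ with $\varphi_\alpha \to 1$ pointwise and $\sup_\alpha \|\check\varphi_\alpha\|_{cbMS^p(\ell^2(\Gamma))} \le C$; the dichotomy then forces $\lambdaapschur(\Gamma)=1$. By Proposition \ref{prp:apschurproperties}(2) the case $p=2$ is trivial, and by Proposition \ref{prp:apschurproperties}(3) we may assume $p \ge 2$, so the real content is $p \in (2,\infty)$.

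First I would record the two endpoint descriptions that frame the problem. For a symbol $\check\varphi$ one has $\|\check\varphi\|_{cbMS^2(\ell^2(\Gamma))}=\|\varphi\|_\infty$, since Schur multipliers on the Hilbert space $S^2$ act by multiplication, while $\|\check\varphi\|_{cbMS^\infty(\ell^2(\Gamma))}$ is the Herz--Schur multiplier norm of $\varphi$. Complex interpolation between these endpoints (sharpening the monotonicity stated before Lemma \ref{lem:schurbcb}) gives $\|\check\varphi\|_{cbMS^p(\ell^2(\Gamma))} \le \|\varphi\|_\infty^{2/p}\,\|\check\varphi\|_{cbMS^\infty(\ell^2(\Gamma))}^{(p-2)/p}$. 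If $\Gamma$ were weakly amenable with constant one this would finish the proof at once, but the AP is strictly weaker: it provides only a net converging to $1$ in a weak-$*$ topology, along which the Herz--Schur norms are allowed to diverge. Hence the interpolation bound alone does not keep $\|\check\varphi_\alpha\|_{cbMS^p(\ell^2(\Gamma))}$ bounded, and this is the crux of the matter.

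The hard part, then, is to convert the weak-$*$ approximation of the identity supplied by the AP into a genuinely $cbMS^p$-bounded net, and this is precisely where the hypothesis $p \in (1,\infty)$ enters through the reflexivity of $S^p$. The mechanism I would set up is a duality: using the characterization $\|T\|_{cb}=\sup_n\|T \otimes \id_{S^p_n}\|$ together with reflexivity of $S^p$, the space of $\Gamma$-invariant completely bounded Schur multipliers on $S^p(\ell^2(\Gamma))$ --- those of the form $\check\varphi$ --- is a dual Banach space, with a concrete predual built from a projective-type tensor product of $S^p$ and $S^{p'}$ carrying the diagonal $\Gamma$-action. In this picture the constant symbol $1$ is the identity multiplier, of $cbMS^p$-norm exactly one. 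The AP net, transferred into this duality, converges to $1$ in the associated weak-$*$ topology; a weak-$*$ compactness argument together with a Mazur-type convexity step in the predual then produces finitely supported symbols $\varphi_\alpha \to 1$ pointwise whose $cbMS^p$-norms are uniformly bounded (indeed arbitrarily close to one). Making this duality precise, and checking that AP-convergence is continuous for the relevant topology, is where I expect essentially all the work to lie; it is also where $p=1,\infty$ genuinely break down, reflecting the role of reflexivity.

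As a cross-check and an alternative route, one can instead invoke the theorem of Junge and Ruan quoted above, that the AP forces $L^p(L(\Gamma))$ to have the OAP for $p \in (1,\infty)$, and then transfer from Fourier multipliers on $L^p(L(\Gamma))$ to Schur multipliers on $S^p(\ell^2(\Gamma))$ via the standard domination $\|\check\varphi\|_{cbMS^p(\ell^2(\Gamma))} \le \|M_\varphi\|_{cb}$, where $M_\varphi$ is the associated Fourier multiplier on $L^p(L(\Gamma))$. Either approach yields the finiteness of $\lambdaapschur(\Gamma)$, and the dichotomy then upgrades this to $\lambdaapschur(\Gamma)=1$, as required.
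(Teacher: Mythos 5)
The paper offers no proof of this lemma: it is quoted verbatim from \cite[Corollary 3.12]{ldls}, so there is no internal argument to compare against, and your proposal must be judged on its own merits. It contains one viable route and one that does not work. The viable one is what you relegate to a ``cross-check'': the theorem of Junge and Ruan that the AP forces $L^p(L(\Gamma))$ to have the OAP for $p\in(1,\infty)$ is quoted in this paper in the paragraph preceding the lemma, and Lemma \ref{lem:oapapschur} (that the OAP of $L^p(L(\Gamma))$ implies $\lambdaapschur(\Gamma)=1$) is quoted immediately after it; chaining the two gives the statement in one line, with no need for the dichotomy at all. Note, however, that your description of the second step is off: the OAP supplies finite-rank approximants, not Fourier multipliers, so the domination $\|\check\varphi\|_{cbMS^p(\ell^2(\Gamma))}\le\|M_\varphi\|_{\cb}$ is not directly applicable. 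The passage from finite-rank maps to finitely supported Schur symbols of controlled norm is an averaging argument, and it is precisely the content of \cite[Corollary 3.13]{ldls}, which you should simply invoke as Lemma \ref{lem:oapapschur}. With that correction this route is complete and is essentially how Lafforgue and de la Salle obtain the result.

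Your main route, by contrast, has a genuine gap at exactly the point you identify as carrying all the work. The AP provides a net $\varphi_\alpha\to 1$ only in the weak-$*$ topology of Haagerup and Kraus \cite{haagerupkraus}, with no norm bound in any multiplier norm. A weak-$*$ compactness argument is therefore unavailable from the outset: Banach--Alaoglu applies only to norm-bounded sets, so no predual of the space of invariant $cbMS^p$-multipliers can extract a bounded subnet from an unbounded net. Likewise, a Mazur-type convexity step converts weak closures of convex sets into norm closures; it does not manufacture a uniform bound for a net that has none, and the constant $1$ is in any case never in the \emph{norm} closure of the finitely supported symbols for infinite $\Gamma$. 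Producing the bounded net is the entire difficulty, and the known resolution is the OAP route above, not a duality on the multiplier space itself. Your reduction via the dichotomy $\lambdaapschur(\Gamma)\in\{1,\infty\}$ and the interpolation inequality between the $p=2$ and $p=\infty$ endpoints are both correct, but, as you acknowledge, neither touches this difficulty.
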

\begin{lem} (\cite[Corollary 3.13]{ldls}) \label{lem:oapapschur}
  If $p \in (1,\infty)$ and $\Gamma$ is a countable discrete group such that $L^p(L(\Gamma))$ has the OAP, then $\Lambda_{p,\mathrm{cb}}^{\mathrm{Schur}}(\Gamma)=1$.
\end{lem}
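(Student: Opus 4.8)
The plan is to route the argument through Fourier multipliers on $L^p(L(\Gamma))$ and then invoke the dichotomy, recalled above, that $\Lambda_{p,\mathrm{cb}}^{\mathrm{Schur}}(\Gamma) \in \{1,\infty\}$ for discrete $\Gamma$ and $p \in (1,\infty)$. Because of this dichotomy it suffices to prove that $\Gamma$ has the $\apschur$ with \emph{some} finite constant; the value $1$ is then automatic. The tool connecting the two sides is the transference identity underlying the definition of the $\apschur$: for $\varphi:\Gamma \to \bbC$ the Fourier multiplier $M_\varphi:\lambda_g \mapsto \varphi(g)\lambda_g$ on $L^p(L(\Gamma))$ is completely bounded if and only if $\check\varphi$ is completely bounded on $S^p(\ell^2(\Gamma))$, with $\|M_\varphi\|_{cb}=\|\check\varphi\|_{cbMS^p(\ell^2(\Gamma))}$. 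Thus the task reduces to producing a net of finitely supported $\varphi_\alpha \in A(\Gamma)$ with $\varphi_\alpha \to 1$ pointwise and $\sup_\alpha \|M_{\varphi_\alpha}\|_{cb} < \infty$.

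First I would extract approximating maps from the hypothesis. The OAP of $E = L^p(L(\Gamma))$ furnishes a net $F_\alpha$ of finite-rank maps with $\|(\id_{\mathcal{K}(\ell^2)} \otimes F_\alpha)x - x\| \to 0$ for all $x \in \mathcal{K}(\ell^2) \otimes_{\min} E$. Since the operators $\id_{\mathcal{K}(\ell^2)} \otimes F_\alpha$ converge pointwise on the Banach space $\mathcal{K}(\ell^2) \otimes_{\min} E$, the uniform boundedness principle gives $C := \sup_\alpha \|\id_{\mathcal{K}(\ell^2)} \otimes F_\alpha\| < \infty$. Next I would project each $F_\alpha$ onto the Fourier multipliers by the idempotent $\Phi \mapsto M_{\hat\Phi}$, where $\hat\Phi(g) = \tau(\lambda_g^* \Phi(\lambda_g))$, which picks out the multiplier part and does not increase the relevant norms (it is realised by averaging the bi-translation action). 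This yields Fourier multipliers $M_{\varphi_\alpha}$; the stabilised point-norm convergence of $F_\alpha$ forces $\varphi_\alpha(g) \to 1$ for every $g$, and truncating the symbols keeps them finitely supported and in $A(\Gamma)$ without destroying pointwise convergence.

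The main obstacle is the passage from boundedness to the \emph{right} boundedness. The uniform boundedness principle controls the $\mathcal{K}(\ell^2)\otimes_{\min}$-stabilised norm of the $F_\alpha$, whereas the $\apschur$ requires a uniform bound on the genuinely different quantity $\|\check\varphi_\alpha\|_{cbMS^p(\ell^2(\Gamma))} = \|M_{\varphi_\alpha}\|_{cb}$, computed with the vector-valued Schatten classes $S^p[\,\cdot\,]$. Bridging these two norms for Fourier multipliers is the crux of the argument. I would attempt this by testing on finite subsets: by Lemma \ref{lem:finitecharacterization} the Schur cb-norm $\|\check\varphi_\alpha\|_{cbMS^p}$ is the supremum over finite $F \subset \Gamma$ of the norms of the matrices $(\check\varphi_\alpha(x_i,x_j))$, and on each finite block the min-stabilised estimate, together with the uniform convexity and smoothness of $S^p$ for $p \in (1,\infty)$, should convert the bound $C$ into a uniform bound on the Schur cb-norms. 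Once a finite bound is in hand, the dichotomy $\Lambda_{p,\mathrm{cb}}^{\mathrm{Schur}}(\Gamma) \in \{1,\infty\}$ closes the proof, giving $\Lambda_{p,\mathrm{cb}}^{\mathrm{Schur}}(\Gamma)=1$. The step I expect to require the most care is precisely this comparison of the min-stabilised norm with the $S^p$-Schur cb-norm, where the restriction $p \in (1,\infty)$ is essential.
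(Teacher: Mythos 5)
The paper does not actually prove this lemma: it is imported verbatim from Lafforgue--de la Salle (their Corollary~3.13), so your proposal has to be measured against that proof. Your overall architecture --- reduce, via the dichotomy $\lambdaapschur(\Gamma)\in\{1,\infty\}$, to producing \emph{some} uniformly bounded approximating net; transfer Fourier multipliers on $L^p(L(\Gamma))$ to Schur multipliers on $S^p(\ell^2(\Gamma))$; extract multipliers from the OAP net by averaging --- is the right one. But there is a genuine gap at the step everything else depends on.

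The uniform boundedness principle does not apply to a \emph{net}: a pointwise convergent net of operators need not be pointwise bounded (a convergent net of scalars need not be bounded), so you cannot conclude $\sup_\alpha\|\id_{\mathcal{K}(\ell^2)}\otimes F_\alpha\|<\infty$. Since $\|\id_{\mathcal{K}(\ell^2)}\otimes F_\alpha\|$ on $\mathcal{K}(\ell^2)\otimes_{\min}E$ is exactly $\|F_\alpha\|_{cb}$, your step, were it valid, would show that the OAP always furnishes a uniformly completely bounded net of finite-rank maps, i.e.\ essentially the CBAP --- precisely the strengthening the OAP is designed to avoid. The gap is not cosmetic: without a uniform bound the remainder collapses, because for $1<p<\infty$ \emph{every} finitely supported $\varphi$ satisfies $\|\check\varphi\|_{cbMS^p(L^2(\Gamma))}\le\|\varphi\|_{\ell^1}<\infty$ (the Schur projection onto a single generalized diagonal $\{(g,h):g^{-1}h=s\}$ is completely contractive on every $S^p$), so the unquantified statement ``there is a net of finitely supported Schur multipliers whose symbols tend to $1$ pointwise'' holds for every discrete group and for every $p$, and cannot imply the $\apschur$. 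The published proof never bounds the $F_\alpha$; it exploits the convexity of the set of finite-rank maps, dualizing the OAP (Hahn--Banach against the predual pairing defining the stable point-norm topology) and transferring the resulting density statement to $S^p(\ell^2(\Gamma))$, where the bounded net is produced only at the level of Schur multipliers.

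Two further points. The transference you invoke, $\|\check\varphi\|_{cbMS^p(L^2(\Gamma))}\le\|M_\varphi\|_{cb(L^p(L(\Gamma)))}$, is itself a nontrivial theorem of Lafforgue--de la Salle, proved by embedding $S^p(\ell^2(\Gamma))$ into $S^p[L^p(L(\Gamma))]$ via conjugation by the unitary $W\delta_{(g,h)}=\delta_{(g,gh)}$ (a Fell-absorption argument) and compressing back; ``uniform convexity and smoothness of $S^p$'' is not the mechanism, testing on finite blocks does not produce it, and only this one inequality --- not the equality you assert --- is what is available and needed. Finally, the two quantities you propose to bridge in your last paragraph, $\sup_n\|\id_{M_n}\otimes F\|$ and $\sup_n\|\id_{S^p_n}\otimes F\|_{S^p_n[L^p(M)]}$, already coincide by the lemma of Pisier quoted in the paper; the genuine bridge to be built is between Fourier multipliers on $L^p(L(\Gamma))$ and Schur multipliers on $S^p(\ell^2(\Gamma))$, which is where the real work of the cited proof lies.
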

One of the main results of Lafforgue and de la Salle is the following.
\begin{thm}(\cite[Theorem E]{ldls}) \label{thm:ldlsmain}
  Let $n \geq 3$. For $p \in [1,\frac{4}{3}) \cup (4,\infty]$, the (exact) group $\SL(n,\bbR)$ does not have the $\apschur$.
\end{thm}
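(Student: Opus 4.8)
The plan is to argue by contradiction and to extract from a hypothetical approximating net a lower bound that blows up, using the decomposition estimate of Proposition \ref{prp:multiplierdecompositionestimate}. Suppose $\SL(n,\bbR)$ has the $\apschur$ for some $p$ in the stated range, with constant $C$; that is, there is a net $\varphi_\alpha \in A(G)$ with $\varphi_\alpha \to 1$ uniformly on compacta and $\sup_\alpha \|\check\varphi_\alpha\|_{cbMS^p(L^2(G))} \leq C$. First I would reduce to the case $n=3$: since $\SL(3,\bbR)$ embeds as a closed block-diagonal subgroup of $\SL(n,\bbR)$, Proposition \ref{prp:apschurproperties}(5) gives $\lambdaapschur(\SL(3,\bbR)) \leq \lambdaapschur(\SL(n,\bbR))$, so it suffices to show $\SL(3,\bbR)$ fails the $\apschur$; this also explains why the threshold is uniform in $n$. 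Next, by Proposition \ref{prp:apschurproperties}(6) I may assume each $\varphi_\alpha$ is $K$-bi-invariant for $K=\SO(3)$, without increasing the cb-norm bound. Finally, by the duality statement Proposition \ref{prp:apschurproperties}(3), it is enough to treat $p \in [1,\frac{4}{3})$, since $\frac{4}{3}$ and $4$ are conjugate exponents and the range $(4,\infty]$ then follows automatically.

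The heart of the argument is to convert the $K$-bi-invariant multipliers $\check\varphi_\alpha$ on $G=\SL(3,\bbR)$ into Schur multipliers on a compact Gelfand pair, so that Proposition \ref{prp:multiplierdecompositionestimate} applies. By Lemma \ref{lem:psiX} each $\varphi_\alpha$ gives a $G$-invariant kernel $\psi_\alpha$ on the symmetric space $X=G/K$, i.e.\ a function of the ``distance'' recorded by the $\overline{A^{+}}$-coordinate. The idea is to analyse the angular (i.e.\ $K/M$, boundary) dependence of $\psi_\alpha$ in the regime where the two arguments are pushed far apart along a flat. By the Harish-Chandra asymptotics of spherical functions, in this limit the kernels $\psi_\alpha$ induce Schur multipliers on a fixed compact Gelfand pair $(U,L)$ attached to $\SL(3,\bbR)$, namely the one coming from the compact dual symmetric space $\SU(3)/\SO(3)$, with $L=\SO(3)$ infinite, and with cb-norms still bounded by $C$. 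Here I would use Lemma \ref{lem:finitecharacterization} to transfer the norm control through finite configurations, comparing the finite Schur matrices on $X$ with those on the compact dual, and the fact that on a pair with infinite $L$ the $MS^p$- and $cbMS^p$-norms coincide (Lemmas \ref{lem:quotientspace} and \ref{lem:finitecharacterization}), so that the cb-bound $C$ feeds directly into the $MS^p$-bound required by Proposition \ref{prp:multiplierdecompositionestimate}.

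With the transfer in hand, Proposition \ref{prp:multiplierdecompositionestimate} yields, for each $\alpha$,
\[
  \Big( \sum_{\pi \in \hat{U}_L} |c_\pi^{(\alpha)}|^p \dim \mathcal{H}_\pi \Big)^{1/p} \leq C,
\]
where $c_\pi^{(\alpha)}$ are the spherical coefficients of the transferred kernel. The contradiction comes from identifying the limit of the profiles $(c_\pi^{(\alpha)})_\pi$. The key point is that $\varphi_\alpha \to 1$ on $G$ together with $\varphi_\alpha \in A(G)$, hence decaying at infinity, forces the transferred kernels to converge not to a constant but to a fixed, genuinely non-constant kernel on $U$ --- essentially a boundary Poisson-type kernel raised to the critical power dictated by the $\rho$-shift. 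This limiting kernel has spherical coefficients decaying like a definite power of $\dim \mathcal{H}_\pi$, and for $p$ small enough the weighted sum $\sum_\pi |c_\pi^{\infty}|^p \dim \mathcal{H}_\pi$ diverges; letting $\alpha$ run then violates the displayed bound, contradicting $\sup_\alpha \|\check\varphi_\alpha\|_{cbMS^p} \leq C$. The threshold $p=\frac{4}{3}$ (and dually $p=4$) is exactly the value at which this weighted $\ell^p$-sum passes from divergence to convergence, determined by matching the decay exponent of the limiting coefficients against the polynomial growth of $\dim \mathcal{H}_\pi$ for $\SU(3)/\SO(3)$.

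The main obstacle is precisely this asymptotic analysis: making rigorous the passage from the noncompact kernels $\psi_\alpha$ on $X$ to a Schur multiplier on the compact Gelfand pair, controlling the limit uniformly in $\alpha$, and computing the decay exponent of the limiting spherical coefficients sharply enough to pin down the critical exponent $\frac{4}{3}$. This requires quantitative control of the Harish-Chandra spherical functions of $\SL(3,\bbR)$ and of the $L$-type dimensions of the compact dual, and it is here that the specific value $\frac{4}{3}$, rather than some other number, is forced. The earlier reductions via Proposition \ref{prp:apschurproperties}(3),(5),(6) and the general multiplier lemmas are routine bookkeeping by comparison.
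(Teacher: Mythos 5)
First, a point of order: the paper does not prove this statement at all --- it is quoted verbatim from Lafforgue and de la Salle \cite[Theorem~E]{ldls}, so there is no internal proof to compare against. The closest analogue inside the paper is the Section~\ref{sec:sp2} argument for $\Sp(2,\bbR)$, which follows the same template as the $\SL(3,\bbR)$ proof in \cite{ldls}. Measured against that template, your reductions are correct and standard: passing to the closed subgroup $\SL(3,\bbR)$ via Proposition~\ref{prp:apschurproperties}(5), averaging to $K$-bi-invariant functions via Proposition~\ref{prp:apschurproperties}(6), using the duality (3) to treat only one side of the range, and aiming to feed a compact Gelfand pair into Proposition~\ref{prp:multiplierdecompositionestimate} are all the right moves.

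The genuine gap is your transfer step, which is where the entire content of the theorem lies. There is no passage ``by Harish-Chandra asymptotics'' from the invariant kernels on $X=G/K$ to Schur multipliers on the compact dual $\SU(3)/\SO(3)$, and no Poisson-kernel or $\rho$-shift computation anywhere in the actual argument; nothing in this paper or in \cite{ldls} supports a limit transfer of cb-multiplier norms to the compact dual, and it is not clear such a transfer exists. What actually happens is much more elementary: the compact Gelfand pair sits inside the maximal compact subgroup itself. For $\SL(3,\bbR)$ one uses $(\SO(3),\SO(2))$, whose spherical functions are the Legendre polynomials $P_n$ on $S^2$ with $\dim\mathcal{H}_n=2n+1$; one restricts the $K$-bi-invariant multiplier to subsets of $G$ of the form $D_r K D_r$ with $D_r$ diagonal, which by Lemma~\ref{lem:finitecharacterization} does not increase the $MS^p$-norm and yields an $\SO(2)$-bi-invariant multiplier on $\SO(3)$ (this is exactly the role played by Lemmas~\ref{lem:fromGtoK} and~\ref{lem:chialpha} in the $\Sp(2,\bbR)$ case). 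Proposition~\ref{prp:multiplierdecompositionestimate} then gives $\bigl(\sum_n |c_n|^p(2n+1)\bigr)^{1/p}\le C$, and combining this with the two bounds $|P_n(x)-P_n(y)|\lesssim n^{-1/2}$ and $|P_n(x)-P_n(y)|\lesssim n^{1/2}|x-y|$ of Lemma~\ref{lem:hoeldersu2} produces a uniform H\"older estimate with exponent $\frac{1}{4}-\frac{1}{p}$, convergent precisely when $p>4$; this, and not the decay rate of coefficients of a limiting kernel, is what forces the thresholds $4$ and $\frac{4}{3}$. The contradiction is then also obtained differently from what you propose: the H\"older estimate propagates along the Cartan subgroup to a uniform exponential-decay bound $|\varphi(D(a))-\varphi_\infty|\le C'\,e^{-c\|a\|}$ (the analogue of Proposition~\ref{prp:sp2abapschur}); since $\varphi_\alpha\in A(G)$ vanishes at infinity, $\varphi_{\alpha,\infty}=0$, which is incompatible with $\varphi_\alpha\to 1$ uniformly on compacta. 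Your ``genuinely non-constant limiting kernel'' has no counterpart in the real proof and would require an entirely new argument to substantiate.
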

As a consequence, the group $\SL(n,\bbR)$ does not have the AP, and for $p \in [1,\frac{4}{3}) \cup (4,\infty]$ and a lattice $\Gamma$ in $\SL(n,\bbR)$, the noncommutative $L^p$-space $L^p(L(\Gamma))$ does not have the OAP or CBAP.

\section{The group $\Sp(2,\bbR)$} \label{sec:sp2}
In this section, we prove the following theorem. The proof is along the same lines as the proof of the failure of the AP for $\Sp(2,\bbR)$ in \cite{haagerupdelaat1} (and for some details we will refer to that article), but obtaining sufficiently sharp estimates for Schur multipliers on Schatten classes is technically more involved.
\begin{thm} \label{thm:sp2notapschur}
  For $p \in [1,\frac{12}{11}) \cup (12,\infty]$, the group $\Sp(2,\bbR)$ does not have the $\apschur$.
\end{thm}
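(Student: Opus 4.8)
The plan is to argue by contradiction: from a hypothetical net witnessing the $\apschur$ I will extract, via the compact Gelfand pair estimate of Proposition~\ref{prp:multiplierdecompositionestimate}, a lower bound for the $S^p$-multiplier norm that a net converging to $1$ cannot sustain once $p$ leaves $[\frac{12}{11},12]$.

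I begin with reductions. Suppose $\Sp(2,\bbR)$ had the $\apschur$ for some $p$ in the stated range, with constant $C$. Since $\frac{1}{12}+\frac{11}{12}=1$, the endpoints $\frac{12}{11}$ and $12$ are conjugate, so by the duality clause of Proposition~\ref{prp:apschurproperties} it suffices to treat $p\in(12,\infty]$. The endpoint $p=\infty$ reduces, by the first clause of that proposition, to the classical failure of weak amenability for $\Sp(2,\bbR)$, so I concentrate on finite $p>12$, for which Proposition~\ref{prp:multiplierdecompositionestimate} is available; by the monotonicity clause a failure at a single $p_0$ only propagates to $[p_0,\infty]$, so the estimates below must be pushed down to the sharp threshold $p=12$. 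Let $K=\U(2)$ be the maximal compact subgroup of $G=\Sp(2,\bbR)$. By the $K$-bi-invariance clause of Proposition~\ref{prp:apschurproperties}, I may assume the net $\varphi_\alpha$ lies in $A(G)\cap C(K\backslash G\slash K)$, converges to $1$ uniformly on compacta, and satisfies $\sup_\alpha\|\check\varphi_\alpha\|_{cbMS^p(L^2(G))}\le C$. Since $K$ is infinite, Lemma~\ref{lem:quotientspace} transfers the bound to the induced kernels $\psi_\alpha$ on $X\times X$, where $X=G\slash K$ is the rank-two symmetric space of $\Sp(2,\bbR)$, giving $\|\psi_\alpha\|_{MS^p(L^2(X))}\le C$.

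The heart of the argument is to convert this single global bound into a divergent sum. Writing elements of $G$ via the $KAK$ decomposition, each $\varphi_\alpha$ is a function of the two polar coordinates on the closed chamber $\overline{A^+}$, and I want to read off its behavior along the two singular directions through compact spherical analysis. The mechanism is the duality between $X$ and its compact dual: the compact symplectic group $\Sp(2)$ gives a compact Gelfand pair $(\Sp(2),\U(2))$ of rank two whose spherical functions are the analytic continuations of those of $(\Sp(2,\bbR),\U(2))$. Passing through the common complexification $\Sp(4,\bbC)$, I transfer $\varphi_\alpha$ to a $\U(2)$-bi-invariant function on a chain of compact Gelfand pairs $(G_n,K_n)$ built from the compact dual symmetric space, with $MS^p$-norm still controlled by $C$ via Lemma~\ref{lem:averaging} and the subgroup clause of Proposition~\ref{prp:apschurproperties}. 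For each such pair I expand the transferred function in spherical functions with Fourier coefficients $c_\pi$ and apply Proposition~\ref{prp:multiplierdecompositionestimate} (legitimate since $K_n$ is infinite) to obtain
\[
  \Big(\sum_\pi |c_\pi|^p\,\dim\mathcal{H}_\pi\Big)^{1/p}\le C,
\]
the sum running over the spherical representations of $(G_n,K_n)$.

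The contradiction comes from the asymptotics. As $\varphi_\alpha\to1$ uniformly on compacta and $n\to\infty$, the coefficients $c_\pi$ concentrate in the way forced by the rank-two spherical (Plancherel) measure, while the multiplicities $\dim\mathcal{H}_\pi$ grow polynomially in the highest weight. The critical exponent $12$ is precisely the value of $p$ at which the resulting sum $\sum_\pi|c_\pi|^p\dim\mathcal{H}_\pi$ passes from convergent to divergent; for $p>12$ it diverges, contradicting the uniform bound $C$. I expect the main obstacle to be exactly this quantitative step, as the paper itself warns: one needs the sharp asymptotics of the rank-two spherical functions of $\Sp(2,\bbR)$ (and of the multiplicities $\dim\mathcal{H}_\pi$) together with uniform control of the sum in both the net parameter $\alpha$ and the truncation $n$, in order to locate the threshold at $12$ rather than merely at some larger exponent.
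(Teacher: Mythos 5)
Your reductions (duality to $p>12$, the endpoint $p=\infty$ via weak amenability, passage to $K$-bi-invariant nets, and the transfer to $X=G\slash K$ via Lemma~\ref{lem:quotientspace}) are correct and match the paper's setup. But the core of your argument -- transferring $\varphi_\alpha$ to the compact dual $\Sp(2)\slash\U(2)$ ``through the common complexification'' and then reading off a divergent coefficient sum from the rank-two spherical Plancherel measure -- is a genuine gap, not just a missing computation. A multiplier $\varphi_\alpha$ in the approximating net is merely continuous, so there is no analytic continuation of it to the compact dual, and no mechanism in the paper (or elsewhere) transports an $MS^p(L^2(G))$ bound for a noncompact group to an $MS^p$ bound on its compact dual symmetric space. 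Proposition~\ref{prp:multiplierdecompositionestimate} applies only to genuinely compact Gelfand pairs on which you actually have a bi-invariant multiplier with controlled norm, and your proposal never produces one. Moreover, even granting the transfer, ``the sum diverges for $p>12$'' is asserted, not derived; you yourself flag this as the main obstacle, so the threshold $12$ is never located.

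The paper's actual route is quite different and worth contrasting. It never leaves $G$: for $\alpha\in\bbR$ it restricts $\varphi$ to the compact subsets $D_\alpha K D_\alpha$ and $D'_\alpha K v D'_\alpha$ of $G$, and by Lemma~\ref{lem:finitecharacterization} (restriction to a subset does not increase the $MS^p$ norm) obtains bi-invariant multipliers for the two \emph{rank-one} compact Gelfand pairs $(\U(2),\U(1))$ and $(\SU(2),\SO(2))$ sitting inside the maximal compact $K$ (Lemmas~\ref{lem:fromGtoK} and~\ref{lem:chialpha}). For these pairs, Proposition~\ref{prp:multiplierdecompositionestimate} gives an $\ell^p$ bound on the spherical coefficients, which combined with the H\"older estimates for Jacobi and Legendre polynomials (Lemmas~\ref{lem:hoelderu2} and~\ref{lem:hoeldersu2}) yields H\"older continuity of $\varphi$ along these compact slices; the exponent $p>12$ arises precisely from the summability condition on $(l+m+1)^{1+p\epsilon-\frac{p}{4}}$ over the spherical dual of $(\U(2),\U(1))$, not from any rank-two Plancherel asymptotics. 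The geometric Lemmas~\ref{lem:hyperbolaseqs}, \ref{lem:circleseqs}, \ref{lem:betagamma} and~\ref{lem:rhosigma} then convert these local H\"older bounds into the exponential decay of $\varphi(D(\alpha_1,\alpha_2))$ to a limit at infinity (Proposition~\ref{prp:sp2abapschur}), which is incompatible with a uniformly bounded net converging to $1$ on compacta. Without the restriction-to-compact-slices idea and the $KAK$ geometry, your outline cannot be completed.
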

In this section, we write $G=\Sp(2,\bbR)$. Recall that $G$ is defined as the Lie group
\[
	G:=\{g \in \GL(4,\bbR) \mid g^t J g = J\},
\]
where
\[
  J=\left( \begin{array}{cc} 0 & I_2 \\ -I_2 & 0 \end{array} \right).
\]
Here $I_2$ denotes the $2 \times 2$ identity matrix. The maximal compact subgroup $K$ of $G$ is isomorphic to $\U(2)$ and explicitly given by
\[
  K= \bigg\{ \left( \begin{array}{cc} A & -B \\ B & A \end{array} \right) \in \mathrm{M}_4(\bbR) \biggm\vert A+iB \in \U(2) \bigg\}.
\]
Let $\overline{A^{+}}=\left\{D(\alpha_1,\alpha_2)= \diag(e^{\alpha_1},e^{\alpha_2},e^{-\alpha_1},e^{-\alpha_2}) \mid \alpha_1 \geq \alpha_2 \geq 0\right\}$. It follows that $G=K\overline{A^{+}}K$.

For $p=1$ and $\infty$, the $\apschur$ is equivalent to weak amenability (as mentioned in Proposition \ref{prp:apschurproperties}), and the failure of weak amenability for $G$ was proved in \cite{haagerupgroupcsacbap}. Therefore, we can restrict ourselves to the case $p \in (1,\infty)$. As follows from Proposition \ref{prp:apschurproperties}, it suffices to consider approximating nets consisting of $K$-bi-invariant functions. The following result gives a certain asymptotic behaviour of continuous $K$-bi-invariant functions $\varphi$ for which the induced function $\check{\varphi}$ is a Schur multiplier on $S^p(L^2(G))$. From this, it follows that the constant function $1$ cannot be approximated pointwise (and hence not uniformly on compacta) by a $K$-bi-invariant net in $A(G)$ in such a way that the net of associated multipliers is uniformly bounded in the $MS^p(L^2(G))$-norm. This implies Theorem \ref{thm:sp2notapschur}.
\begin{prp} \label{prp:sp2abapschur}
  Let $p > 12$. There exist constants $C_1(p),C_2(p)$ (depending on $p$ only) such that for all $\varphi \in C(K \backslash G \slash K)$ for which $\check{\varphi} \in MS^p(L^2(G))$, the limit $\varphi_{\infty}=\lim_{\|\alpha\| \to \infty} \varphi(D(\alpha_1,\alpha_2))$ exists, and for all $\alpha_1 \geq \alpha_2 \geq 0$,
\[
  |\varphi(D(\alpha_1,\alpha_2))-\varphi_{\infty}| \leq C_1(p)\|\check{\varphi}\|_{MS^p(L^2(G))}e^{-C_2(p)\|a\|_2},
\]
where $\|\alpha\|_2=\sqrt{\alpha_1^2+\alpha_2^2}$.
\end{prp}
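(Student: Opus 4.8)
The plan is to reduce everything to a statement about Schur multipliers on \emph{compact} Gelfand pairs, where the decomposition estimate of Proposition \ref{prp:multiplierdecompositionestimate} is available. By Proposition \ref{prp:apschurproperties}(6) and Lemma \ref{lem:quotientspace} I may assume $\varphi$ is $K$-bi-invariant and pass to the continuous kernel $\psi\colon X\times X\to\bbC$ of Lemma \ref{lem:psiX} on $X=G/K$, with $\|\psi\|_{MS^p(L^2(X))}=\|\check\varphi\|_{MS^p(L^2(G))}$. Since $\varphi$ is $K$-bi-invariant, $\psi(x,y)$ depends only on the $G$-invariant of the pair $(x,y)$, i.e.\ on the Cartan coordinate $(\alpha_1,\alpha_2)$ attached to $x^{-1}y$ by the $KAK$ decomposition, so proving the statement amounts to controlling $\psi$ as a function of this invariant.

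Next I would transfer the multiplier bound onto compact Gelfand pairs. The point of Lemma \ref{lem:finitecharacterization} is that $\|\psi\|_{MS^p(L^2(X))}$ dominates the $S^p$-Schur-multiplier norm of every finite submatrix $(\psi(x_i,x_j))_{i,j}$ with $x_i\in X$. I would therefore choose, for each large parameter, finite point configurations in $X$ whose pairwise invariants match, in a contraction (Mehler--Heine) limit, those of a finite configuration on a compact symmetric space dual to $X$, namely on $\Sp(2)/\U(2)$ and on its rank-one sub-pairs. Because only the finite invariants enter, this realizes $\psi$ as a limit of Schur multipliers on these compact Gelfand pairs $(U,L)$ (with $L$ infinite), with norms bounded by $\|\check\varphi\|_{MS^p(L^2(G))}$. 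Proposition \ref{prp:multiplierdecompositionestimate} then gives, uniformly in the parameter, the weighted bound $\sum_{\pi}|c_\pi|^p\dim\mathcal H_\pi\le\|\check\varphi\|_{MS^p(L^2(G))}^p$ on the spherical coefficients produced by Lemma \ref{lem:peterweylspecial}, where the scale of the compact pair has been chosen so that the relevant double-coset angle tracks $\|\alpha\|_2$ and reaches arbitrarily large values in the limit.

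The analytic heart is then to reconstruct $\varphi$ from these coefficients. By Lemma \ref{lem:peterweylspecial} the spherical functions $h_\pi$ are Jacobi-type polynomials, with parameters dictated by the $C_2$ root multiplicities of $\Sp(2,\bbR)$ (all equal to $1$), and their rescaled limits reproduce the spherical functions governing $\psi$ on $X$, which decay like $e^{-\rho(\log a)}$ with $\rho=2e_1+e_2$; note $\rho(\log a)=2\alpha_1+\alpha_2\ge\sqrt2\,\|\alpha\|_2$ on the chamber $\alpha_1\ge\alpha_2\ge0$. Writing $\varphi(D(\alpha))-\varphi_\infty$ as the contribution of the non-trivial part of the spherical expansion and applying Hölder with the dual exponent $q$ to the weighted $\ell^p$-bound, I would bound it by the $\ell^p$-norm of the coefficients times an $\ell^q$-norm of the spherical functions $\phi_\lambda(D(\alpha))$ against the Plancherel weight. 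Existence of $\varphi_\infty$ and the rate $e^{-C_2(p)\|\alpha\|_2}$ both follow once this $\ell^q$-norm is finite and decays; the convergence of the controlling series, which pits the polynomial growth of $\dim\mathcal H_\pi$ (equivalently the Plancherel density) against the decay of $\phi_\lambda(D(\alpha))$, is exactly what forces the restriction $p>12$. The conjugate range $p\in[1,\tfrac{12}{11})$ of Theorem \ref{thm:sp2notapschur} is not needed here and follows from Proposition \ref{prp:apschurproperties}(3) together with $\|\psi\|_{MS^p}=\|\psi\|_{MS^q}$.

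\emph{The main obstacle} I anticipate is twofold and lives in the second and third steps: first, engineering the compact Gelfand pairs and the contraction so that the compact double-coset parameter faithfully and quantitatively tracks the Cartan coordinate $(\alpha_1,\alpha_2)$ while keeping the coefficient bound uniform; and second, proving sharp enough uniform (in both the angle and the growing parameters) pointwise bounds on the relevant Jacobi polynomials to make the summability threshold come out at precisely $p>12$ rather than a weaker range. This is the step the introduction flags as ``technically more involved'' than the $M_0A(G)$ estimate in \cite{haagerupdelaat1}, since there the Hankel/Hilbert-matrix input is replaced by sharp $S^p$-estimates for these polynomial kernels.
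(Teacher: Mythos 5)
Your first reduction (to $K$-bi-invariant $\varphi$ and the kernel $\psi$ on $X=G/K$) matches the paper, but the central transfer step of your plan has a genuine gap. You propose to realize $\psi$, via Lemma \ref{lem:finitecharacterization} and a ``contraction (Mehler--Heine) limit,'' as a limit of Schur multipliers on the \emph{compact dual} $\Sp(2)/\U(2)$, arranging ``the relevant double-coset angle to track $\|\alpha\|_2$ and reach arbitrarily large values in the limit.'' This cannot work as stated: the double-coset invariant of a compact Gelfand pair ranges over a compact set, so finite configurations in $X$ with large pairwise Cartan coordinates have no counterpart with matching invariants on the compact dual, and the contraction limits of compact spherical functions recover Euclidean (flat) spherical functions, not the values of noncompact spherical functions at large argument. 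Your fallback --- expanding $\varphi$ directly in the spherical functions of the noncompact pair $(G,K)$ and applying a weighted $\ell^p$ coefficient bound --- is also unavailable, because Proposition \ref{prp:multiplierdecompositionestimate} is proved only for compact Gelfand pairs: its proof rests on the discrete Peter--Weyl decomposition of Lemma \ref{lem:peterweylspecial} and on the finite-dimensional projections $P_{\mathcal{H}_\pi}$ having finite $S^p$-norm, none of which survives in the noncompact setting, where the decomposition is a direct integral against Plancherel measure.

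The mechanism the paper actually uses is different and worth internalizing: one restricts $\check\varphi$ to translated copies $D_\alpha K D_\alpha$ and $D'_\alpha K v D'_\alpha$ of the maximal compact subgroup, which are honest subsets of $G$, so Lemma \ref{lem:finitecharacterization} gives Schur multipliers on the \emph{compact} pairs $(\U(2),\U(1))$ and $(\SU(2),\SO(2))$ with norm at most $\|\check\varphi\|_{MS^p(L^2(G))}$ (Lemmas \ref{lem:fromGtoK} and \ref{lem:chialpha}). Proposition \ref{prp:multiplierdecompositionestimate} then applies legitimately, and combined with H\"older-continuity estimates for the Jacobi and Legendre spherical functions it yields H\"older continuity of the restricted multipliers (Lemmas \ref{lem:behavioru2}, \ref{lem:behaviorsu2}); the threshold $p>12$ arises there, from summing $(l+m+1)^{1+p\epsilon-p/4}$ over the $(\U(2),\U(1))$ spectrum, not from a Plancherel-density-versus-$e^{-\rho(\log a)}$ computation. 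Finally, the exponential decay in $\|\alpha\|_2$ does not come from decay of noncompact spherical functions at all: it comes from the explicit $\sinh$-identities (Lemmas \ref{lem:hyperbolaseqs}, \ref{lem:circleseqs}, \ref{lem:betagamma}, \ref{lem:rhosigma}) showing that the compact double-coset parameters of the points representing $KD(\beta,\gamma)K$ and $KD(2s,s)K$ (resp.\ $KD(2t,t)K$) are exponentially close, which feeds the H\"older estimate and then a Cauchy-net argument along the ray $(2t,t)$ (Lemmas \ref{lem:betagammacir}--\ref{lem:limit}). Without replacing your dual-pair limit by this restriction-to-$D_\alpha K D_\alpha$ device, the proof does not go through.
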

\begin{rmk}
  Note that Proposition \ref{prp:sp2abapschur} is stated in terms of the $MS^p(L^2(G))$-norm rather than the $cbMS^p(L^2(G))$-norm. However, we have $\|.\|_{MS^p(L^2(G))} \leq \|.\|_{cbMS^p(L^2(G))}$, which shows that Proposition \ref{prp:sp2abapschur} is indeed sufficient to prove Theorem \ref{thm:sp2notapschur}. Moreover, by \cite[Theorem 1.18]{ldls}, the claims are equivalent for non-discrete groups.
\end{rmk}
For the proof of Proposition \ref{prp:sp2abapschur}, we will identify two Gelfand pairs in $G$ and describe certain properties of their spherical functions.

Consider the group $\U(2)$, which contains the circle group $\U(1)$ as a subgroup via the embedding
\[
  \U(1) \hookrightarrow \left( \begin{array}{cc} 1 &  0 \\ 0 & \U(1) \end{array} \right) \subset \U(2).
\]
Let $K_1$ denote the copy of $\U(1)$ in $G$ under the identification of $\U(2)$ with $K$. It goes back to Weyl \cite{weyl} that $(\U(2),\U(1))$ is a Gelfand pair (see, e.g., \cite[Theorem IX.9.14]{knapp}). The homogeneous space $\U(2) \slash \U(1)$ is homeomorphic to the complex $1$-sphere $S_{\bbC}^1 \subset \bbC^2$ and the double coset space $\U(1) \backslash \U(2) \slash \U(1)$ is homeomorphic to the closed unit disc $\overline{\mathbb{D}} \subset \bbC$ by the map
\[
  \U(1)\left( \begin{array}{cc} u_{11} & u_{12} \\ u_{21} & u_{22} \end{array} \right)\U(1) \mapsto u_{11}.
\]
The spherical functions for $(\U(2),\U(1))$ can be found in \cite{koornwinder}. By the homeomorphism $\U(1) \backslash \U(2) \slash \U(1) \cong \overline{\mathbb{D}}$, they can be considered as functions of one complex variable in the closed unit disc. They are indexed by the integers $l,m \geq 0$ and explicitly given by
\[
  h_{l,m} \left( \begin{array}{cc} u_{11} & u_{12} \\ u_{21} & u_{22} \end{array} \right)=h_{l,m}^0(u_{11}),
\]
where in the point $z \in \overline{\mathbb{D}}$, the function $h_{l,m}^0$ is explicitly given by
\[
  h_{l,m}^0(z)=\left\{ \begin{array}{ll} z^{l-m} P_m^{(0,l-m)}(2|z|^2-1) & \qquad l \geq m, \\ \overline{z}^{m-l} P_l^{(0,m-l)}(2|z|^2-1) & \qquad l < m. \end{array} \right.
\]
Here $P_n^{(\alpha,\beta)}$ denotes the $n^{\textrm{th}}$ Jacobi polynomial. These spherical functions satisfy a certain H\"older continuity condition, as is stated in the following lemma (see \cite[Corollary 3.5]{haagerupdelaat1}). The proof of this Lemma makes use of recent results by Haagerup and Schlichtkrull \cite{hsjacobi}.
\begin{lem} \label{lem:hoelderu2}
  For all $l,m \geq 0$, and for $\theta_1,\theta_2 \in [0,2\pi)$, we have
\[
  \biggl\vert h_{l,m}^0 \left( \frac{e^{i\theta_1}}{\sqrt{2}} \right) - h_{l,m}^0 \left( \frac{e^{i\theta_2}}{\sqrt{2}} \right) \biggr\vert \leq C(l+m+1)^{\frac{3}{4}}|\theta_1-\theta_2|,
\]
\[
  \biggl\vert h_{l,m}^0 \left( \frac{e^{i\theta_1}}{\sqrt{2}} \right) - h_{l,m}^0 \left( \frac{e^{i\theta_2}}{\sqrt{2}} \right) \biggr\vert \leq 2C(l+m+1)^{-\frac{1}{4}}.
\]
Here $C>0$ is a uniform constant. Combining the two, we get
\[
  \biggl\vert h_{l,m}^0 \left( \frac{e^{i\theta_1}}{\sqrt{2}} \right) - h_{l,m}^0 \left( \frac{e^{i\theta_2}}{\sqrt{2}} \right) \biggr\vert \leq 2^{\frac{3}{4}}C|\theta_1-\theta_2|^{\frac{1}{4}}.
\]
\end{lem}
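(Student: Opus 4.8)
The plan is to exploit the fact that the estimates are required only on the circle $|z|=\frac{1}{\sqrt 2}$, where the spherical functions collapse to an elementary form. First I would observe that for $z=\frac{e^{i\theta}}{\sqrt 2}$ one has $2|z|^2-1=0$, so the Jacobi polynomial appearing in $h_{l,m}^0$ is always evaluated at the single interior point $0$. Writing $n=\min(l,m)$ and $d=|l-m|$, this gives
\[
  h_{l,m}^0\!\left(\frac{e^{i\theta}}{\sqrt 2}\right)=c_{l,m}\,e^{i(l-m)\theta},\qquad c_{l,m}:=2^{-d/2}P_n^{(0,d)}(0),
\]
in both cases $l\geq m$ and $l<m$ (they differ only in the sign of the exponent $l-m$, which is immaterial below). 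Thus the entire $\theta$-dependence sits in a single unimodular phase, and the difference factors as
\[
  h_{l,m}^0\!\left(\frac{e^{i\theta_1}}{\sqrt 2}\right)-h_{l,m}^0\!\left(\frac{e^{i\theta_2}}{\sqrt 2}\right)=c_{l,m}\bigl(e^{i(l-m)\theta_1}-e^{i(l-m)\theta_2}\bigr).
\]

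Using the elementary bounds $|e^{ik\theta_1}-e^{ik\theta_2}|\leq |k|\,|\theta_1-\theta_2|$ and $|e^{ik\theta_1}-e^{ik\theta_2}|\leq 2$ with $k=l-m$, the two displayed inequalities of the lemma reduce to the single scalar estimate
\[
  |c_{l,m}|=2^{-d/2}\bigl|P_n^{(0,d)}(0)\bigr|\leq C\,(l+m+1)^{-\frac14}.
\]
Indeed, the second inequality of the lemma is exactly $2|c_{l,m}|\leq 2C(l+m+1)^{-1/4}$, and the first then follows at once because $|l-m|=d\leq l+m+1$, so that $|c_{l,m}|\,|l-m|\leq C(l+m+1)^{-1/4}(l+m+1)=C(l+m+1)^{3/4}$. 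Here it is convenient that $l+m+1=2n+d+1$, which interpolates between the two natural regimes of the parameters.

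The heart of the matter is therefore the uniform bound on $2^{-d/2}|P_n^{(0,d)}(0)|$, and this is where the estimates of Haagerup and Schlichtkrull \cite{hsjacobi} enter. The point $x=0$ corresponds to $\theta=\frac{\pi}{2}$ under the substitution $x=\cos\theta$, and depending on the ratio $d/n$ it may lie in the oscillatory region of $P_n^{(0,d)}$, near a turning point, or already in the exponentially small region; the decay rate of the amplitude there, once combined with the normalising factor $2^{-d/2}$, is precisely of order $(2n+d+1)^{-1/4}$. I expect the main obstacle to be obtaining this bound \emph{uniformly} in the two parameters $n$ and $d$ at once, since the classical Jacobi asymptotics are not uniform as the parameter $d$ grows with the degree; it is exactly this uniformity that the Haagerup--Schlichtkrull inequalities supply. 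In the degenerate cases the estimate is transparent: for $d=0$ one has $|P_n(0)|=O(n^{-1/2})$, and for $n=0$ one has $|c_{l,m}|=2^{-d/2}$, both comfortably dominated by $(l+m+1)^{-1/4}$.

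Finally, the combined H\"older-$\frac14$ estimate follows by interpolating the two bounds with weights $\frac14$ and $\frac34$: writing $A$ for the left-hand side,
\[
  A=A^{1/4}A^{3/4}\leq\bigl(C(l+m+1)^{3/4}|\theta_1-\theta_2|\bigr)^{1/4}\bigl(2C(l+m+1)^{-1/4}\bigr)^{3/4}=2^{3/4}C\,|\theta_1-\theta_2|^{1/4},
\]
the powers of $(l+m+1)$ cancelling exactly. This yields all three inequalities of the lemma.
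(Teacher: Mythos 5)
Your argument is correct and is essentially the proof the paper relies on: the paper cites \cite[Corollary 3.5]{haagerupdelaat1}, which proceeds exactly as you do, reducing everything on the circle $|z|=\tfrac{1}{\sqrt{2}}$ to the single scalar bound $2^{-|l-m|/2}\bigl|P_{\min(l,m)}^{(0,|l-m|)}(0)\bigr|\leq C(l+m+1)^{-1/4}$ supplied by the uniform Jacobi-polynomial inequalities of \cite{hsjacobi}, and then interpolating the Lipschitz and sup bounds with weights $\tfrac14$ and $\tfrac34$. Your reduction, the handling of both cases $l\geq m$ and $l<m$, and the final interpolation all match the source.
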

Let $\varphi:\U(2) \lra \bbC$ be a $\U(1)$-bi-invariant continuous function. Then
\[
  \varphi(u)=\varphi\left( \begin{array}{ll} u_{11} & u_{12} \\ u_{21} & u_{22} \end{array} \right)=\varphi^0(u_{11}), \quad u \in \U(2),\,u_{11} \in \overline{\mathbb{D}},
\]
for some continuous function $\varphi^0:\overline{\mathbb{D}} \lra \bbC$. By Lemma \ref{lem:peterweylspecial}, we know that $L^2(X)=\oplus_{l,m \geq 0} \mathcal{H}_{l,m}$, where $X=\U(2) \slash \U(1) \cong S_1^{\bbC}$. It is known that $\dim{\mathcal{H}_{l,m}}=l+m+1$, so, by Proposition \ref{prp:multiplierdecompositionestimate}, we get
\[
  \varphi^0=\sum_{l,m=0}^{\infty} c_{l,m}(l+m+1) h_{l,m}^0,
\]
for certain $c_{l,m} \in \bbC$. Moreover, by the same proposition, we obtain that if $p \in (1,\infty)$, then $(\sum_{l,m \geq 0} |c_{l,m}|^p (l+m+1))^{\frac{1}{p}} \leq \|\check{\varphi}\|_{MS^p(L^2(\U(2)))}$, where $\check{\varphi}$ is defined as above by $\check{\varphi}(g,h)=\varphi(g^{-1}h)$.
\begin{lem} \label{lem:behavioru2}
Let $p > 12$, and let $\varphi:\U(2) \lra \bbC$ be a continuous $\U(1)$-bi-invariant function such that $\check{\varphi}$ is an element of $MS^p(L^2(\U(2)))$. Then $\varphi^0$ satisfies
\[
  \left|\varphi^0\left(\frac{e^{i\theta_1}}{\sqrt{2}}\right)-\varphi^0\left(\frac{e^{i\theta_2}}{\sqrt{2}}\right)\right| \leq \tilde{C}(p)\|{\check{\varphi}}\|_{MS^p(L^2(\U(2)))}|\theta_1-\theta_2|^{\frac{1}{8}-\frac{3}{2p}}
\]
for $\theta_1,\theta_2 \in [0,2\pi)$. Here, $\tilde{C}(p)$ is a constant depending only on $p$.
\end{lem}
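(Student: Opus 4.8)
The plan is to feed the coefficient estimate of Proposition \ref{prp:multiplierdecompositionestimate} into the two H\"older bounds for the spherical functions recorded in Lemma \ref{lem:hoelderu2}, and thereby reduce the assertion to a one-variable modulus-of-continuity estimate on the circle $\{|z|=1/\sqrt2\}$. Since $(\U(2),\U(1))$ is a compact Gelfand pair with the infinite compact subgroup $K=\U(1)$, Proposition \ref{prp:multiplierdecompositionestimate} gives the expansion $\varphi^0=\sum_{l,m\ge 0}c_{l,m}(l+m+1)h_{l,m}^0$ together with the control $\bigl(\sum_{l,m}|c_{l,m}|^p(l+m+1)\bigr)^{1/p}\le\|\check\varphi\|_{MS^p(L^2(\U(2)))}$. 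Writing $a_{l,m}=c_{l,m}(l+m+1)$, $N=l+m+1$, and $\delta=|\theta_1-\theta_2|$, the triangle inequality reduces the claim to estimating
\[
  \sum_{l,m}|a_{l,m}|\,\Bigl|h_{l,m}^0\bigl(\tfrac{e^{i\theta_1}}{\sqrt2}\bigr)-h_{l,m}^0\bigl(\tfrac{e^{i\theta_2}}{\sqrt2}\bigr)\Bigr|,
\]
and I would bound each increment by interpolating the two inequalities of Lemma \ref{lem:hoelderu2}: the Lipschitz-type bound $C N^{3/4}\delta$ for small $N$ and the decay bound $2CN^{-1/4}$ for large $N$.

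The mechanism meant to produce the exponent is a split of the sum at a threshold $N\asymp\delta^{-1}$ (equivalently, the geometric interpolation $g_N\le C'\delta^{\beta}N^{\beta-1/4}$ of the two bounds), followed by H\"older's inequality with $\frac1p+\frac1q=1$ in the form
\[
  \sum_{l,m}|a_{l,m}|\,g_{l,m}\le\|\check\varphi\|_{MS^p(L^2(\U(2)))}\Bigl(\sum_{l,m}(l+m+1)\,g_{l,m}^{\,q}\Bigr)^{1/q},
\]
so that the dimension weight $(l+m+1)$ from Proposition \ref{prp:multiplierdecompositionestimate} is exactly absorbed. Balancing the two competing powers $3/4$ and $-1/4$ against $\delta$ is what is designed to yield the H\"older exponent $\tfrac18-\tfrac{3}{2p}$ and the constant $\tilde C(p)$, with $p>12$ appearing as the threshold below which this exponent ceases to be positive.

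The main obstacle, and the reason the argument is genuinely harder than in the $S^\infty$ setting, is the convergence of the large-$N$ part. At level $N$ there are $N$ pairs $(l,m)$, whereas the spherical functions decay only like $N^{-1/4}$, so the H\"older pairing above leads to a series comparable to $\sum_N N^2 g_N^{\,q}$, which diverges precisely when $q$ is close to $1$, i.e.\ $p$ large; the crude termwise estimate therefore does not suffice. To beat this I would retain cancellation by using the explicit form of the spherical functions on the circle: for $|z|=1/\sqrt2$ one has $2|z|^2-1=0$, so $h_{l,m}^0(e^{i\theta}/\sqrt2)$ is a single Fourier mode $e^{i(l-m)\theta}$ times a Jacobi-polynomial amplitude carrying the exponentially small factor $2^{-|l-m|/2}$. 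Reorganising the double sum by the frequency $k=l-m$ — where $2^{-|k|/2}$ forces rapid convergence in $k$ while $|e^{ik\theta_1}-e^{ik\theta_2}|\le\min(2,|k|\delta)$ supplies the modulus of continuity — is the step I expect to be the crux, and it is exactly here that the sharp Haagerup--Schlichtkrull estimates underlying Lemma \ref{lem:hoelderu2} and the restriction $p>12$ must be used in an essential way.
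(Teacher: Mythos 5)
Your outline assembles the right ingredients (the coefficient bound from Proposition \ref{prp:multiplierdecompositionestimate}, the two estimates of Lemma \ref{lem:hoelderu2}, and an interpolation between them), but the H\"older pairing is applied the wrong way round, and this is a genuine gap rather than a presentational one. You place the coefficients in $\ell^p$ and the spherical increments $g_{l,m}$ in $\ell^q$; since $q=p/(p-1)<\tfrac{12}{11}$ when $p>12$, the resulting series $\sum_N N\cdot N\cdot g_N^{\,q}\asymp\sum_N N^{2-q/4}$ diverges, as you yourself observe. The rescue you propose does not close the gap: on the circle $|z|=1/\sqrt2$ one has $h^0_{l,m}(e^{i\theta}/\sqrt2)=e^{i(l-m)\theta}\,2^{-|l-m|/2}P^{(0,|l-m|)}_{\min(l,m)}(0)$, but the amplitude $2^{-|l-m|/2}\bigl|P^{(0,|l-m|)}_{\min(l,m)}(0)\bigr|$ is \emph{not} rapidly decaying in the frequency $k=l-m$; it is only $O\bigl((l+m+1)^{-1/4}\bigr)$ --- that is precisely the Haagerup--Schlichtkrull bound already encoded in Lemma \ref{lem:hoelderu2}, with the factor $2^{-|k|/2}$ needed just to compensate the growth of $P^{(0,|k|)}_{\min(l,m)}(0)$. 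At level $N=l+m+1$ there are still $\asymp N$ pairs with $|k|\geq N/2$, each contributing $\asymp N^{-1/4}\min(2,|k|\delta)$, so the $\ell^q$ sum over increments still diverges for $q$ near $1$. No cancellation is available either, because the coefficients $c_{l,m}$ carry no sign information.

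The missing idea is the self-duality of the multiplier norm, $\|\check{\varphi}\|_{MS^q(L^2(\U(2)))}=\|\check{\varphi}\|_{MS^p(L^2(\U(2)))}$ for $\tfrac1p+\tfrac1q=1$. This lets you apply Proposition \ref{prp:multiplierdecompositionestimate} with exponent $q$ and run H\"older the other way: coefficients in $\ell^q$ (controlled by $\|\check{\varphi}\|_{MS^p}$) and increments in $\ell^p$. Then, interpolating the two bounds of Lemma \ref{lem:hoelderu2} termwise as $g_{l,m}\leq 2^{1-\epsilon}C\,|\theta_1-\theta_2|^{\epsilon}(l+m+1)^{\epsilon-\frac14}$, the relevant series becomes
\[
\sum_{l,m\geq 0}(l+m+1)\,g_{l,m}^{\,p}\;\lesssim\;|\theta_1-\theta_2|^{p\epsilon}\sum_{N\geq 1}N^{\,2+p\epsilon-\frac{p}{4}},
\]
which converges exactly when $\epsilon<\tfrac14-\tfrac3p$; the condition $p>12$ is precisely what makes this range nonempty, and the choice $\epsilon=\tfrac12\bigl(\tfrac14-\tfrac3p\bigr)=\tfrac18-\tfrac{3}{2p}$ yields the stated exponent. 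With this reversal your termwise estimate suffices and no frequency reorganisation is needed.
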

\begin{proof}
Let $p,q \in (1,\infty)$ be such that $\frac{1}{p}+\frac{1}{q}=1$. Then for $\theta_1,\theta_2 \in [0,2\pi)$,
\begin{equation} \nonumber
\begin{split}
  &\left|\varphi^0\left(\frac{e^{i\theta_1}}{\sqrt{2}}\right)-\varphi^0\left(\frac{e^{i\theta_2}}{\sqrt{2}}\right)\right| = \sum_{l,m \geq 0} |c_{l,m}|(l+m+1)\left|h_{l,m}^0\left(\frac{e^{i\theta_1}}{\sqrt{2}}\right)-h_{l,m}^0\left(\frac{e^{i\theta_2}}{\sqrt{2}}\right)\right| \\
  &\leq \left( \sum_{l,m \geq 0} |c_{l,m}|^q(l+m+1) \right)^{\frac{1}{q}} \left( \sum_{l,m \geq 0} (l+m+1)\left|h_{l,m}^0\left(\frac{e^{i\theta_1}}{\sqrt{2}}\right)-h_{l,m}^0\left(\frac{e^{i\theta_2}}{\sqrt{2}}\right)\right|^p \right)^{\frac{1}{p}} \\
  &\leq \|\check{\varphi}\|_{MS^q(L^2(\U(2)))} \left( \sum_{l,m \geq 0} (l+m+1) \left|h_{l,m}^0\left(\frac{e^{i\theta_1}}{\sqrt{2}}\right)-h_{l,m}^0\left(\frac{e^{i\theta_2}}{\sqrt{2}}\right)\right|^p \right)^{\frac{1}{p}}.
\end{split}
\end{equation}
Note that $\|\check{\varphi}\|_{MS^q(L^2(\U(2)))}=\|\check{\varphi}\|_{MS^p(L^2(\U(2)))}$. If we look at the terms of the last sum, we get, using Lemma \ref{lem:hoelderu2} and the fact that $\min\{x,y\} \leq x^{\epsilon}y^{1-\epsilon}$ for $x,y > 0$ and $\epsilon \in (0,1)$, that
\begin{equation} \nonumber
\begin{split}
    &(l+m+1)  \left|h_{l,m}^0\left(\frac{e^{i\theta_1}}{\sqrt{2}}\right)-h_{l,m}^0\left(\frac{e^{i\theta_2}}{\sqrt{2}}\right)\right|^p \\
    &\quad\leq \min\{C^p(l+m+1)^{1+\frac{3}{4}p}|\theta_1-\theta_2|^p,2^pC^p(l+m+n)^{1-\frac{1}{4}p}\} \\
    &\quad\leq 2^{p(1-\epsilon)}C^p |\theta_1-\theta_2|^{p\epsilon} (l+m+1)^{1+p\epsilon-\frac{1}{4}p}
\end{split}
\end{equation}
for $\epsilon \in (0,1)$. Hence, the sum converges for $0 < \epsilon < \frac{1}{4}-\frac{3}{p}$. Such an $\epsilon$ only exists for $p > 12$. Hence, if $p > 12$, and putting $\epsilon=\frac{1}{2}(\frac{1}{4}-\frac{3}{p})=\frac{1}{8}-\frac{3}{2p}$, then
\[
  \left|\varphi^0\left(\frac{e^{i\theta_1}}{\sqrt{2}}\right)-\varphi^0\left(\frac{e^{i\theta_2}}{\sqrt{2}}\right)\right| \leq \tilde{C}(p) \|\check{\varphi}\|_{MS^p(L^2(\U(2))} |\theta_1-\theta_2|^{\frac{1}{8}-\frac{3}{2p}}
\]
for some constant $\tilde{C}(p)$ depending only on $p$.
\end{proof}
For $\alpha \in \bbR$ consider the map $K \lra G$ defined by $k \mapsto D_{\alpha}kD_{\alpha}$, where $D_{\alpha} = \diag(e^{\alpha},1,e^{-\alpha},1)$.
\begin{lem} \label{lem:fromGtoK}
	Let $\varphi:G \lra \bbC$ be a continuous $K$-bi-invariant function such that $\check{\varphi} \in MS^p(L^2(G))$ for some $p \in (1,\infty)$, and for $\alpha \in \bbR$, let $\psi_{\alpha}:K \lra \bbC$ be defined by $\psi_{\alpha}(k)=\varphi(D_{\alpha}kD_{\alpha})$. Then $\psi_{\alpha}$ is $K_1$-bi-invariant and satisfies
	\[
		\|\check{\psi}_{\alpha}\|_{MS^p(L^2(\U(2))} \leq \|\check{\varphi}\|_{MS^p(L^2(G))}.
	\]
\end{lem}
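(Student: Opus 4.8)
The plan is to prove the two assertions of the lemma in turn: the $K_1$-bi-invariance of $\psi_{\alpha}$, which is a direct algebraic consequence of a commutation relation, and the norm estimate, which I would obtain by exhibiting $\check{\psi}_{\alpha}$ as a rectangular submatrix of $\check{\varphi}$ and then invoking the finite characterization of Lemma \ref{lem:finitecharacterization}.

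First I would record that $K_1$ commutes with $D_{\alpha}$. Writing an element of $K_1 \subset K \cong \U(2)$ as the $4 \times 4$ matrix that acts as the identity on the coordinates $\{1,3\}$ and as a rotation in the plane spanned by the coordinates $\{2,4\}$, and noting that $D_{\alpha}=\diag(e^{\alpha},1,e^{-\alpha},1)$ scales the coordinates $\{1,3\}$ while fixing $\{2,4\}$, one sees that the two matrices act on complementary coordinate blocks and therefore commute: $D_{\alpha}k=kD_{\alpha}$ for every $k \in K_1$. Hence, for $k_1,k_2 \in K_1$,
\[
  \psi_{\alpha}(k_1 k k_2)=\varphi(D_{\alpha}k_1 k k_2 D_{\alpha})=\varphi\bigl(k_1 (D_{\alpha}k D_{\alpha}) k_2\bigr)=\varphi(D_{\alpha}k D_{\alpha})=\psi_{\alpha}(k),
\]
where the second equality uses the commutation and the third uses the $K$-bi-invariance of $\varphi$ together with $K_1 \subset K$. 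This yields the $K_1$-bi-invariance.

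For the norm estimate, the idea is that the kernel $\check{\psi}_{\alpha}$ on $K=\U(2)$ is a block of the kernel $\check{\varphi}$ on $G$. By Lemma \ref{lem:finitecharacterization} it suffices to bound, uniformly over finite subsets $\{k_1,\ldots,k_n\} \subset \U(2)$, the Schur multiplier with symbol $(\check{\psi}_{\alpha}(k_i,k_j))_{i,j}=(\varphi(D_{\alpha}k_i^{-1}k_j D_{\alpha}))_{i,j}$ on $S^p(\ell^2_n)$. The crucial observation is that, setting $r_i=k_i D_{\alpha}^{-1}$ and $c_j=k_j D_{\alpha}$ in $G$, one has
\[
  r_i^{-1}c_j=D_{\alpha}k_i^{-1}k_j D_{\alpha}, \qquad \text{so} \qquad \check{\varphi}(r_i,c_j)=\varphi(r_i^{-1}c_j)=\check{\psi}_{\alpha}(k_i,k_j).
\]
Thus $(\check{\psi}_{\alpha}(k_i,k_j))_{i,j}$ is precisely the rectangular block of $\check{\varphi}$ with row indices $\{r_i\}$ and column indices $\{c_j\}$.

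The main point is then a restriction (corner) principle for Schur multipliers. Applying Lemma \ref{lem:finitecharacterization} to $\check{\varphi}$ on the finite set $F=\{r_1,\ldots,r_n,c_1,\ldots,c_n\} \subset G$ shows that the Schur multiplier with symbol $(\check{\varphi}(x,y))_{x,y \in F}$ is bounded on $S^p(\ell^2(F))$ with norm at most $\|\check{\varphi}\|_{MS^p(L^2(G))}$, and a rectangular block of a Schur multiplier has norm no larger than the whole: padding a test matrix supported on the $\{r_i\}\times\{c_j\}$ block by zeros to all of $\ell^2(F)$ is $S^p$-isometric, the full multiplier acts entrywise, and compressing back to the block is contractive. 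Hence the multiplier $(\check{\psi}_{\alpha}(k_i,k_j))_{i,j}$ is bounded on $S^p(\ell^2_n)$ with norm at most $\|\check{\varphi}\|_{MS^p(L^2(G))}$. Taking the supremum over all finite subsets of $\U(2)$ and applying Lemma \ref{lem:finitecharacterization} once more, now to the bounded continuous kernel $\check{\psi}_{\alpha}$ on the compact group $\U(2)$, gives $\|\check{\psi}_{\alpha}\|_{MS^p(L^2(\U(2)))} \leq \|\check{\varphi}\|_{MS^p(L^2(G))}$. The only step needing genuine care is this rectangular restriction principle, since the rows and columns are indexed by the distinct subsets $\{r_i\}$ and $\{c_j\}$ of $F$ (one checks $r_i \neq c_j$ for $\alpha \neq 0$, the case $\alpha=0$ being trivial); this is where I expect the main, if routine, bookkeeping to lie.
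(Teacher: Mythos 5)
Your proof is correct and follows essentially the same route as the paper: the same commutation argument ($D_{\alpha}$ commutes with $K_1$) for bi-invariance, and for the norm estimate the same reduction via Lemma \ref{lem:finitecharacterization} to finite subsets of $G$, using that $D_{\alpha}KD_{\alpha} \subset G$. The paper states the second part in one sentence; your identification of $(\check{\psi}_{\alpha}(k_i,k_j))_{i,j}$ with the $\{k_iD_{\alpha}^{-1}\}\times\{k_jD_{\alpha}\}$ block of $\check{\varphi}$ and the pad-and-compress argument is exactly the bookkeeping that sentence suppresses.
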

\begin{proof}
Using the fact that the group elements $D_{\alpha}$ commute with $K_1$, it follows that for all $k \in K$ and $k_1,k_2 \in K_1 \subset K_2$,
\[
  \psi_{\alpha}(k_1kk_2)=\varphi(D_{\alpha}k_1kk_2D_{\alpha})=\varphi(k_1D_{\alpha}kD_{\alpha}k_2)=\varphi(D_{\alpha}kD_{\alpha})=\psi_{\alpha}(k),
\]
so $\psi_{\alpha}$ is $K_1$-bi-invariant.

The second part follows by the fact that $D_{\alpha}KD_{\alpha}$ is a subset of $G$ and by applying Lemma \ref{lem:finitecharacterization}.
\end{proof}
From the fact that $\psi_{\alpha}$ is $K_1$-bi-invariant, it follows that $\psi_{\alpha}(u)=\psi_{\alpha}^0(u_{11})$, where $\psi_{\alpha}^0:\overline{\mathbb{D}} \lra \bbC$ is a continuous function.

Suppose that $\alpha_1 \geq \alpha_2 \geq 0$, and let $D(\alpha_1,\alpha_2)$ be as defined above. If we find an element of the form $D_{\alpha}kD_{\alpha}$ in $KD(\alpha_1,\alpha_2)K$, we can relate the value of a $K$-bi-invariant multiplier $\varphi$ to the value of the multiplier $\psi_{\alpha}$ that was just defined. This only works for certain $\alpha_1,\alpha_2 \geq 0$. It turns out to be sufficient to consider certain candidates for $k$, namely the ones of the form
\begin{equation} \label{eq:uform}
u=\left( \begin{array}{cc} a+ib & -\sqrt{1-a^2-b^2} \\ \sqrt{1-a^2-b^2} & a-ib \end{array} \right)
\end{equation}
with $a^2+b^2 \leq 1$. For a proof of the following result, see \cite[Lemma 3.9]{haagerupdelaat1}.
\begin{lem} \label{lem:hyperbolaseqs}
  Let $\alpha \geq 0$ and $\beta \geq \gamma \geq 0$. If $u \in K$ is of the form \eqref{eq:uform} with respect to the identification of $K$ with $\U(2)$, then $D_{\alpha}uD_{\alpha} \in KD(\beta,\gamma)K$ if and only if
\begin{equation} \label{eq:hyperbolaseqs}
  \begin{cases}
    & \sinh \beta \sinh \gamma = \sinh^2 \alpha (1-a^2-b^2), \\  
    & \sinh \beta - \sinh \gamma = \sinh(2\alpha)|a|.
  \end{cases}
  \end{equation}
\end{lem}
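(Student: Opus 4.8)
The plan is to reduce the double-coset membership of $M := D_\alpha u D_\alpha$ to a statement about the spectrum of $M^tM$, and then to extract the two equations of \eqref{eq:hyperbolaseqs} by computing two coefficients of the characteristic polynomial of $M^tM$. First I would record that the maximal compact subgroup is exactly $K = G \cap \Or(4)$: writing $\tilde u = A+iB \in \U(2)$, unitarity is equivalent to $A^tA + B^tB = I$ and $A^tB = B^tA$, and a direct block computation shows that these are precisely the conditions making the real realisation $\left(\begin{smallmatrix} A & -B \\ B & A\end{smallmatrix}\right)$ orthogonal. Consequently, if $M = k_1 D(\beta,\gamma)k_2$ with $k_1,k_2 \in K$, then $M^tM = k_2^t D(\beta,\gamma)^2 k_2$ is orthogonally conjugate to $D(\beta,\gamma)^2 = \diag(e^{2\beta},e^{2\gamma},e^{-2\beta},e^{-2\gamma})$. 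Since the $\overline{A^+}$-part of the $KAK$ decomposition is unique and $\beta \geq \gamma \geq 0$ are read off from the eigenvalues of $M^tM$, membership $M \in KD(\beta,\gamma)K$ is \emph{equivalent} to $M^tM$ having eigenvalues $e^{\pm 2\beta}, e^{\pm 2\gamma}$.

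Next I would use the symplectic structure to constrain that spectrum. As $\Sp(2,\bbR)$ is stable under transposition (from $g^tJg=J$ one deduces $gJg^t=J$ using $J^{-1}=-J$), the matrix $N := M^tM$ is a symmetric positive-definite symplectic matrix; then $N^{-1} = -JNJ = JNJ^{-1}$ is conjugate to $N$, so the eigenvalues of $N$ occur in reciprocal pairs $\{\lambda,\lambda^{-1}\}$. Hence its characteristic polynomial is palindromic,
\begin{equation*}
  \det(\lambda I - M^tM) = \lambda^4 - p\lambda^3 + q\lambda^2 - p\lambda + 1, \qquad p = \Tr(M^tM),\quad q = e_2(M^tM),
\end{equation*}
where $e_2$ is the second elementary symmetric function of the eigenvalues and the constant term is $\det(M^tM) = (\det M)^2 = 1$. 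Matching with $(\lambda^2 - 2\cosh(2\beta)\lambda + 1)(\lambda^2 - 2\cosh(2\gamma)\lambda + 1)$ shows that $M \in KD(\beta,\gamma)K$ holds iff $p = 2\cosh 2\beta + 2\cosh 2\gamma$ and $q = 2 + 4\cosh 2\beta \cosh 2\gamma$.

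It then remains to compute $p$ and $q$ explicitly. Writing $c = \sqrt{1-a^2-b^2}$ and reading off the sixteen entries of $M = D_\alpha u D_\alpha$, the trace $p = \sum_{i,j} M_{ij}^2$ is a short sum; using $a^2+b^2+c^2 = 1$ and $\cosh 2x = 1 + 2\sinh^2 x$, it collapses to $p = 4 + 4\sinh^2(2\alpha)\,a^2 + 8\sinh^2(\alpha)\,c^2$, and comparison with $p = 4 + 4(\sinh^2\beta + \sinh^2\gamma)$ yields
\begin{equation*}
  \sinh^2\beta + \sinh^2\gamma = \sinh^2(2\alpha)\,a^2 + 2\sinh^2(\alpha)\,c^2. \tag{$\ast$}
\end{equation*}
For the second relation I would compute $q = e_2(M^tM) = \tfrac12\big(p^2 - \Tr((M^tM)^2)\big)$; after the same substitutions, and using $(\ast)$, this should reduce to $\sinh^2\beta\sinh^2\gamma = \sinh^4(\alpha)\,c^4$, i.e. $\sinh\beta\sinh\gamma = \sinh^2(\alpha)\,c^2$ (the nonnegative root being forced by $\beta,\gamma \geq 0$), which is the first line of \eqref{eq:hyperbolaseqs}. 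Subtracting twice this identity from $(\ast)$ gives $(\sinh\beta - \sinh\gamma)^2 = \sinh^2(2\alpha)\,a^2$, and the nonnegative root (legitimate since $\beta \geq \gamma$ and $\alpha \geq 0$) is the second line. The converse is then automatic: equations \eqref{eq:hyperbolaseqs} fix $\sinh^2\beta+\sinh^2\gamma$ and $\sinh\beta\sinh\gamma$, hence $p$ and $q$, hence the spectrum of $M^tM$, so the reduction of the first paragraph closes the equivalence. The main obstacle is the bookkeeping in $q$: forming $M^tM$ and tracking the cancellations in $\Tr((M^tM)^2)$ (or equivalently in the six $2\times 2$ principal minors) down to the clean value $\sinh^4(\alpha)\,c^4$ is the one genuinely laborious step, everything else being structural.
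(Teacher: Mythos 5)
Your proof is correct, but note that the paper does not actually prove this lemma: it is quoted from \cite{haagerupdelaat1}, Lemma 3.9, so the comparison is with the argument there. Both share the same backbone --- since $K\subset\Or(4)$ and the $\overline{A^{+}}$-component of the $KAK$ decomposition is unique, membership of $M=D_{\alpha}uD_{\alpha}$ in $KD(\beta,\gamma)K$ is equivalent to $M^tM$ having spectrum $\{e^{\pm 2\beta},e^{\pm2\gamma}\}$, which for a positive symmetric symplectic matrix is detected by two scalar invariants --- but they extract the two equations differently. You match the coefficients $p=\Tr(M^tM)$ and $q=e_2(M^tM)$ of the palindromic characteristic polynomial; I checked the step you flag as laborious, and it does come out as you predict: summing the squares of the $2\times2$ minors of $M$ and using $a^2+b^2+c^2=1$ gives $e_2(M^tM)=6+8\bigl(\sinh^2(2\alpha)\,a^2+2\sinh^2(\alpha)\,c^2\bigr)+16\sinh^4(\alpha)\,c^4$, so matching $q$ modulo $(\ast)$ is precisely $\sinh\beta\sinh\gamma=\sinh^2(\alpha)\,c^2$, and the rest of your argument closes correctly. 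The cited proof rests on the same spectral reduction but organizes the computation through the complex structure rather than the full $4\times4$ Gram matrix: writing $g\in\Sp(2,\bbR)$ as a real-linear map $z\mapsto Pz+Q\bar z$ of $\bbC^2$, one finds $k_1D(\beta,\gamma)k_2\leftrightarrow (P,Q)$ with $Q=\tilde k_1\diag(\sinh\beta,\sinh\gamma)\overline{\tilde k_2}$ for unitaries $\tilde k_1,\tilde k_2$, so $g\in KD(\beta,\gamma)K$ if and only if the antilinear block $Q$ has singular values $\sinh\beta\geq\sinh\gamma$, i.e.\ if and only if $\Tr(Q^*Q)=\sinh^2\beta+\sinh^2\gamma$ and $|\det Q|=\sinh\beta\sinh\gamma$. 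For $M=D_{\alpha}uD_{\alpha}$ this block is the $2\times2$ matrix $\left(\begin{smallmatrix} a\sinh(2\alpha) & -c\sinh\alpha \\ c\sinh\alpha & 0\end{smallmatrix}\right)$, and the two lines of \eqref{eq:hyperbolaseqs} drop out from its determinant and trace with no further work. In short: same structural idea, but your route trades that conceptual setup for a heavier (though entirely correct) coefficient computation.
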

Consider the second Gelfand pair sitting inside $G$, namely the pair of groups $(\SU(2),\SO(2))$. Both groups are subgroups of $\U(2)$, so under the embedding into $G$, they give rise to compact Lie subgroups of $G$. The subgroup corresponding to $\SU(2)$ will be called $K_2$, and the one corresponding to $\SO(2)$ will be called $K_3$. The group $K_3$ commutes with the group generated by the elements $D_{\alpha}^{\prime}=\diag(e^{\alpha},e^{\alpha},e^{-\alpha},e^{-\alpha})$, where $\alpha \in \bbR$. The subgroup $\SU(2) \subset \U(2)$ consists of matrices of the form
\begin{equation} \nonumber
  u=\left( \begin{array}{cc} a+ib & -c+id \\ c+id & a-ib \end{array} \right),
\end{equation}
with $a,b,c,d \in \bbR$ such that $a^2+b^2+c^2+d^2=1$.

By \cite[Theorem 47.6]{bump}, the pair $(\SU(2),\SO(2))$ is a Gelfand pair. This also follows from \cite[Chapter 9]{farautanalysisonliegroups}. The homogeneous space $\SU(2)\slash \SO(2)$ is the sphere $S^2$, and the spherical functions on the double coset space $[-1,1]$ are indexed by $n \geq 0$, and given by the Legendre polynomials
\[
  P_n(2(a^2+c^2)-1)=P_n(a^2-b^2+c^2-d^2).
\]
Note that the double cosets of $\SO(2)$ in $\SU(2)$ are labeled by $a^2-b^2+c^2-d^2$. We use the following estimate (see \cite[Lemma 3.11]{haagerupdelaat1}).
\begin{lem} \label{lem:hoeldersu2}
For all non-negative integers $n$, and $x,y \in [-\frac{1}{2},\frac{1}{2}]$,
\begin{equation} \nonumber
\begin{split}
  |P_n(x)-P_n(y)| &\leq |P_n(x)|+|P_n(y)| \leq \frac{4}{\sqrt{n}},\\
  |P_n(x)-P_n(y)| &\leq \left|\int_x^y P_n^{\prime}(t)dt\right| \leq 4\sqrt{n}|x-y|.
\end{split}
\end{equation}
Combining the two, we get
	\[
	  |P_n(x)-P_n(y)|\leq 4|x-y|^{\frac{1}{2}}
	\]
for $x,y \in [-\frac{1}{2},\frac{1}{2}]$, i.e., the Legendre polynomials are uniformly H\"older continuous on $[-\frac{1}{2},\frac{1}{2}]$ with exponent $\frac{1}{2}$.
\end{lem}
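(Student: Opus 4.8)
The plan is to deduce the H\"older estimate from two complementary one-sided bounds that trade off in $n$: a uniform sup bound $\sup_{x \in [-1/2,1/2]}|P_n(x)| \leq C n^{-1/2}$ and a uniform derivative bound $\sup_{x \in [-1/2,1/2]}|P_n'(x)| \leq C n^{1/2}$. The first, together with the triangle inequality, yields the first displayed inequality $|P_n(x)-P_n(y)| \leq |P_n(x)|+|P_n(y)| \leq 4 n^{-1/2}$; the second, together with the fundamental theorem of calculus, yields $|P_n(x)-P_n(y)| = |\int_x^y P_n'(t)\,dt| \leq 4 n^{1/2}|x-y|$. Combining them is then exactly the elementary interpolation $\min\{a,b\} \leq a^{\epsilon}b^{1-\epsilon}$ used in the proof of Lemma \ref{lem:behavioru2}, now with $\epsilon=\tfrac12$: from $|P_n(x)-P_n(y)| \leq \min\{4 n^{-1/2}, 4 n^{1/2}|x-y|\}$ the geometric mean gives $|P_n(x)-P_n(y)| \leq \sqrt{4 n^{-1/2}\cdot 4 n^{1/2}|x-y|} = 4|x-y|^{1/2}$. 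The case $n=0$ is trivial since $P_0 \equiv 1$. Thus all the content is in the two uniform bounds, and the decisive structural point is that restricting to $[-1/2,1/2]$ keeps the argument away from the turning points $\pm 1$, where $P_n$ concentrates and the $n^{-1/2}$ decay fails.

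For the sup bound I would use the Laplace--Heine integral representation
\[
  P_n(\cos\theta) = \frac{1}{\pi}\int_0^{\pi}\left(\cos\theta + i\sin\theta\cos\phi\right)^n\,d\phi,
\]
which gives $|P_n(\cos\theta)| \leq \frac{1}{\pi}\int_0^{\pi}(1-\sin^2\theta\sin^2\phi)^{n/2}\,d\phi$, since the modulus squared of the integrand is $\cos^2\theta+\sin^2\theta\cos^2\phi = 1-\sin^2\theta\sin^2\phi$. Writing $x=\cos\theta$, the constraint $x \in [-\tfrac12,\tfrac12]$ forces $\theta \in [\tfrac{\pi}{3},\tfrac{2\pi}{3}]$, hence $\sin^2\theta \geq \tfrac34$ uniformly; this is the geometric heart of the matter. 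Using $(1-\tfrac34\sin^2\phi)^{n/2} \leq e^{-\frac{3n}{8}\sin^2\phi}$ and Jordan's inequality $\sin\phi \geq \tfrac{2}{\pi}\phi$ on $[0,\tfrac{\pi}{2}]$, the integral is dominated by a Gaussian integral $\int_0^{\infty}e^{-cn\phi^2}\,d\phi = O(n^{-1/2})$, producing $C n^{-1/2}$ with an explicit constant comfortably below $2$, so that the sum of two such terms stays below $4$.

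For the derivative bound I would proceed in parallel, differentiating the representation under the integral sign: this brings out a factor $n$ and an extra factor $1 - i\frac{x\cos\phi}{\sqrt{1-x^2}}$, whose modulus is at most $1 + \frac{|x|}{\sqrt{1-x^2}} \leq 1 + \frac{1}{\sqrt 3}$ on $[-\tfrac12,\tfrac12]$, while the remaining integrand is of the same type as before with exponent $(n-1)/2$. The net effect is a bound $n\cdot O((n-1)^{-1/2}) = O(n^{1/2})$. (Alternatively, one can invoke the classical identity $(1-x^2)P_n'(x) = n\bigl(P_{n-1}(x)-xP_n(x)\bigr)$ and combine the sup bound on $P_n, P_{n-1}$ with $1-x^2 \geq \tfrac34$.) The cases $n=0,1$ are checked by hand.

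The main obstacle is not the structure, which is short, but the constant bookkeeping needed to land the two bounds exactly at $4 n^{-1/2}$ and $4 n^{1/2}$ so that the geometric mean comes out precisely to $4|x-y|^{1/2}$; one must in particular confirm that the explicit constants from the Gaussian estimate and from $1+\tfrac{1}{\sqrt3}$ also respect the small values of $n$. For the application in Lemma \ref{lem:behavioru2} only the H\"older exponent $\tfrac12$ and finiteness of the constant matter, and the sharp Jacobi-polynomial estimates of Haagerup and Schlichtkrull \cite{hsjacobi}, specialized to the Legendre case $\alpha=\beta=0$, streamline this constant-tracking considerably.
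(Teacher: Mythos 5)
Your proposal is correct, and structurally it coincides with the argument the paper relies on: two uniform bounds on $[-\frac12,\frac12]$ (a sup bound of order $n^{-1/2}$ and a derivative bound of order $n^{1/2}$), combined by taking the geometric mean of the two resulting estimates for $|P_n(x)-P_n(y)|$. Note, however, that the paper does not prove this lemma at all; it quotes it verbatim from \cite[Lemma~3.11]{haagerupdelaat1}, where the two ingredient bounds are obtained by citing the classical Szeg\H{o}/Bernstein inequality $|P_n(\cos\theta)|\le\bigl(\tfrac{2}{\pi n\sin\theta}\bigr)^{1/2}$ (together with $\sin\theta\ge\tfrac{\sqrt3}{2}$ when $\cos\theta\in[-\tfrac12,\tfrac12]$) and a derivative recurrence, rather than by a from-scratch estimate of the Laplace--Heine integral. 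Your self-contained derivation checks out: the modulus computation $|\cos\theta+i\sin\theta\cos\phi|^2=1-\sin^2\theta\sin^2\phi$ is right, the Gaussian comparison gives $\sup_{|x|\le 1/2}|P_n(x)|\le\sqrt{2\pi/(3n)}\approx 1.45\,n^{-1/2}<2n^{-1/2}$, and differentiating under the integral sign produces the extra factor of modulus at most $1+\tfrac{1}{\sqrt3}$ times an integral of the same type with exponent $\tfrac{n-1}{2}$, which lands below $4\sqrt{n}$ for all $n\ge 2$ (with $n=0,1$ trivial). The only caution is the one you already flag: if one instead uses the recurrence $(1-x^2)P_n'(x)=n\bigl(P_{n-1}(x)-xP_n(x)\bigr)$ with the crude bound $2n^{-1/2}$ on $P_n,P_{n-1}$, the resulting constant slightly exceeds $4$ for small $n$, so the sharper sup constant from the Gaussian estimate (or the classical inequality) is genuinely needed to land exactly at $4\sqrt{n}$. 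For the application in Lemma~\ref{lem:behaviorsu2} only the exponent $\tfrac12$ and finiteness of the constant matter, so this bookkeeping is cosmetic.
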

Let $\varphi:\SU(2) \lra \bbC$ be a $\SO(2)$-bi-invariant continuous function. Then
\[
  \varphi(u)=\varphi\left( \begin{array}{ll} a+ib & -c+id \\ c+id & a-ib \end{array} \right)=\varphi^0(2(a^2+c^2)-1)=\varphi^0(a^2-b^2+c^2-d^2),
\]
where $u \in \U(2),\,u_{11} \in \overline{\mathbb{D}}$, and where $\varphi^0:\overline{\mathbb{D}} \lra \bbC$ is some continuous function. By Lemma \ref{lem:peterweylspecial}, we know that $L^2(X)=\oplus_{n \geq 0} \mathcal{H}_n$, where $X = \SU(2) \slash \SO(2) \cong S^2$. It is known that $\dim{\mathcal{H}_n}=2n+1$, so, by Proposition \ref{prp:multiplierdecompositionestimate}, we get
\[
  \varphi^0=\sum_{n=0}^{\infty} c_n(2n+1) P_n,
\]
for certain $c_n \in \bbC$. Moreover, by the same proposition, we obtain that if $p \in (1,\infty)$, then $(\sum_{n \geq 0} |c_n|^p (2n+1))^{\frac{1}{p}} \leq \|\check{\varphi}\|_{MS^p(L^2(\SU(2)))}$, where $\check{\varphi}$ is defined as above by $\check{\varphi}(g,h)=\varphi(g^{-1}h)$.
\begin{lem} \label{lem:behaviorsu2}
Let $p>4$, and let $\varphi:\SU(2) \lra \bbC$ be a continuous $\SO(2)$-bi-invariant function such that $\check{\varphi} \in MS^p(L^2(\SU(2)))$. Then $\varphi^0$ satisfies
\[
  |\varphi^0(\delta_1)-\varphi^0(\delta_2)| \leq \hat{C}(p)\|{\varphi}\|_{MS^p(L^2(\SU(2))}|\delta_1-\delta_2|^{\frac{1}{4}-\frac{1}{p}}
\]
for $\delta_1,\delta_2 \in [-\frac{1}{2},\frac{1}{2}]$. Here $\hat{C}(p)$ is a constant depending only on $p$.
\end{lem}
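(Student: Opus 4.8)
The plan is to run the same Hölder-interpolation argument that proved Lemma \ref{lem:behavioru2}, but now using the Gelfand pair $(\SU(2),\SO(2))$ instead of $(\U(2),\U(1))$. The key inputs are already in place: the spherical expansion $\varphi^0 = \sum_{n \geq 0} c_n (2n+1) P_n$ with the coefficient bound $(\sum_{n \geq 0} |c_n|^p (2n+1))^{1/p} \leq \|\check{\varphi}\|_{MS^p(L^2(\SU(2)))}$ from Proposition \ref{prp:multiplierdecompositionestimate}, together with the two-sided Hölder estimate for Legendre polynomials in Lemma \ref{lem:hoeldersu2}. First I would write the difference $\varphi^0(\delta_1) - \varphi^0(\delta_2)$ as the series $\sum_{n \geq 0} c_n (2n+1)(P_n(\delta_1) - P_n(\delta_2))$ and apply Hölder's inequality with conjugate exponents $p,q$, splitting off the factor $(2n+1)^{1/q}$ onto the coefficient side and $(2n+1)^{1/p}$ onto the polynomial side. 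This bounds the quantity by $\|\check{\varphi}\|_{MS^q(L^2(\SU(2)))}$ times $\bigl(\sum_{n \geq 0} (2n+1)|P_n(\delta_1)-P_n(\delta_2)|^p\bigr)^{1/p}$, and since $\frac{1}{p}+\frac{1}{q}=1$ the first factor equals $\|\check{\varphi}\|_{MS^p(L^2(\SU(2)))}$.

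Next I would estimate the individual terms $(2n+1)|P_n(\delta_1)-P_n(\delta_2)|^p$ exactly as in the proof of Lemma \ref{lem:behavioru2}. Using the two estimates of Lemma \ref{lem:hoeldersu2}, namely $|P_n(\delta_1)-P_n(\delta_2)| \leq \frac{4}{\sqrt{n}}$ and $|P_n(\delta_1)-P_n(\delta_2)| \leq 4\sqrt{n}|\delta_1-\delta_2|$, and the elementary inequality $\min\{x,y\} \leq x^{\epsilon} y^{1-\epsilon}$ for $x,y > 0$ and $\epsilon \in (0,1)$, the $n$th term is bounded (up to a constant depending on $p$) by $|\delta_1-\delta_2|^{p\epsilon} (2n+1)^{1 + p\epsilon - \frac{1}{2}p}$ after interpolating between the two bounds. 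The series $\sum_n (2n+1)^{1+p\epsilon-\frac{1}{2}p}$ converges precisely when $1 + p\epsilon - \frac{1}{2}p < -1$, i.e.\ when $\epsilon < \frac{1}{2} - \frac{2}{p}$; such an $\epsilon \in (0,1)$ exists exactly when $p > 4$, matching the hypothesis. Choosing $\epsilon = \frac{1}{2}\bigl(\frac{1}{2}-\frac{2}{p}\bigr) = \frac{1}{4}-\frac{1}{p}$ yields the claimed exponent $|\delta_1-\delta_2|^{\frac{1}{4}-\frac{1}{p}}$, and absorbing the convergent series and all numerical constants into a single $\hat{C}(p)$ finishes the estimate.

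The argument is structurally identical to Lemma \ref{lem:behavioru2}, so I do not expect any genuine obstacle; the only point requiring a little care is tracking the exponents through the interpolation so that the threshold $p > 4$ and the final Hölder exponent come out correctly. In particular, the power of $(2n+1)$ here carries a $-\frac{1}{2}p$ (coming from the $\frac{1}{\sqrt{n}}$ decay of $P_n$) rather than the $-\frac{1}{4}p$ that appeared in the $\U(2)$ case, which is precisely why the critical value shifts from $p=12$ down to $p=4$. One should also note, as in the previous lemma, that $\|\check{\varphi}\|_{MS^q} = \|\check{\varphi}\|_{MS^p}$ is used to return from the dual exponent to $p$, and that $\delta_1,\delta_2$ are restricted to $[-\tfrac{1}{2},\tfrac{1}{2}]$ so that Lemma \ref{lem:hoeldersu2} applies verbatim.
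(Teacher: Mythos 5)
Your proposal is correct and follows essentially the same route as the paper's own proof: the spherical expansion from Proposition \ref{prp:multiplierdecompositionestimate}, H\"older's inequality with the dual exponent identity $\|\check{\varphi}\|_{MS^q}=\|\check{\varphi}\|_{MS^p}$, and interpolation of the two Legendre estimates of Lemma \ref{lem:hoeldersu2} via $\min\{x,y\}\leq x^{\epsilon}y^{1-\epsilon}$ with $\epsilon=\frac{1}{4}-\frac{1}{p}$. The exponent bookkeeping, the convergence threshold $p>4$, and the final H\"older exponent all match the paper exactly.
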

\begin{proof}
Let $p,q \in (1,\infty)$ be such that $\frac{1}{p}+\frac{1}{q}=1$, and let $\delta_1,\delta_2 \in [-\frac{1}{2},\frac{1}{2}]$. Then
\begin{equation} \nonumber
\begin{split}
  &|\varphi^0(\delta_1)-\varphi^0(\delta_2)|=\sum_{n \geq 0} |c_n|(2n+1)|P_n(\delta_1)-P_n(\delta_2)| \\
    &\quad\leq \left( \sum_{n \geq 0} |c_n|^q(2n+1) \right)^{\frac{1}{q}} \left( \sum_{n \geq 0} (2n+1)|P_n(\delta_1)-P_n(\delta_2)|^p \right)^{\frac{1}{p}} \\
    &\quad\leq \|\check{\varphi}\|_{MS^q(L^2(\SU(2)))} \left( \sum_{n \geq 0} (2n+1) |P_n(\delta_1)-P_n(\delta_2)|^p \right)^{\frac{1}{p}}.
\end{split}
\end{equation}
Note that $\|\check{\varphi}\|_{MS^q(L^2(\SU(2)))}=\|\check{\varphi}\|_{MS^p(L^2(\SU(2)))}$. If we look at the terms of the last sum, we get, using Lemma \ref{lem:hoeldersu2} and the fact that $\min\{x,y\} \leq x^{\epsilon}y^{1-\epsilon}$ for $x,y > 0$ and $\epsilon \in (0,1)$, that
\begin{equation} \nonumber
\begin{split}
  (2n+1) |P_n(\delta_1)-P_n(\delta_2)|^p &\leq \min \{4^p(2n+1) n^{-\frac{p}{2}}, 4^p(2n+1) n^{\frac{p}{2}} |\delta_1-\delta_2|^p \} \\
    &\leq 4^p (3n)^{1+p\epsilon-\frac{p}{2}} |\delta_1-\delta_2|^{p\epsilon}
\end{split}
\end{equation}
for $\epsilon \in (0,1)$. Hence, the sum converges for $\epsilon \in (0,\frac{1}{2}-\frac{2}{p})$. Such an $\epsilon$ only exists for $p > 4$. Hence, if $p > 4$, and putting $\epsilon=\frac{1}{2}(\frac{1}{2}-\frac{2}{p})=\frac{1}{4}-\frac{1}{p}$, we have
\[
  |\varphi^0(\delta_1)-\varphi^0(\delta_2)| \leq \hat{C}(p)\|\check{\varphi}\|_{MS^pL^2(\U(2))}|\delta_1-\delta_2|^{\frac{1}{4}-\frac{1}{p}},
\]
where $\hat{C}(p)$ is a constant depending only on $p$.
\end{proof}
For $\alpha \in \bbR$ consider the map $K \lra G$ defined by $k \mapsto D_{\alpha}^{\prime}kvD_{\alpha}^{\prime}$, where $D_{\alpha}^{\prime}=\diag(e^{\alpha},e^{\alpha},e^{-\alpha},e^{-\alpha})$ and $v \in Z(K)$ is chosen to be the matrix in $K$ that in the $\U(2)$-representation of $K$ is given by
\begin{equation} \label{eq:v}
  v=\left( \begin{array}{cc} \frac{1}{\sqrt{2}}(1+i) & 0 \\ 0 & \frac{1}{\sqrt{2}}(1+i) \end{array} \right).
\end{equation}
Given a $K$-bi-invariant multiplier on $G$, this map gives rise to a $K_3$-bi-invariant multiplier on $K$. We state the following result, but omit its proof, as it is similar to the one of Lemma \ref{lem:fromGtoK}.
\begin{lem} \label{lem:chialpha}
	Let $\varphi:G \lra \bbC$ be a continuous $K$-bi-invariant function such that $\check{\varphi} \in MS^p(L^2(G))$ for some $p \in (1,\infty)$, and for $\alpha \in \bbR$ let $\tilde{\chi}_{\alpha}:K \lra \bbC$ be defined by $\tilde{\chi}_{\alpha}(k)=\varphi(D_{\alpha}^{\prime}kvD_{\alpha}^{\prime})$. Then $\tilde{\chi}_{\alpha}$ is $K_3$-bi-invariant and satisfies
	\[
		\|\check{\tilde{\chi}}_{\alpha}\|_{MS^p(L^2(K))} \leq \|\check{\varphi}\|_{MS^p(L^2(G))}.
	\]
\end{lem}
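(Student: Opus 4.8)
The plan is to mimic the proof of Lemma~\ref{lem:fromGtoK} almost verbatim, replacing the map $k\mapsto D_\alpha k D_\alpha$ by $k\mapsto D_\alpha^\prime k v D_\alpha^\prime$, and the pair $(K,K_1)$ by $(K,K_3)$. Two things must be checked: that $\tilde\chi_\alpha$ is $K_3$-bi-invariant, and that the multiplier norm does not increase. Both rely only on the two commutation facts already recorded in the text together with Lemma~\ref{lem:finitecharacterization}, so no new analytic input is needed.

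For the bi-invariance I would use that the elements $D_\alpha^\prime=\diag(e^\alpha,e^\alpha,e^{-\alpha},e^{-\alpha})$ commute with $K_3$ (the copy of $\SO(2)$), and that $v$, being a scalar multiple of the identity in the $\U(2)$-picture, lies in $Z(K)$ and hence commutes with every element of $K_3$. Then, for $k\in K$ and $k_1,k_2\in K_3$, one computes
\[
  \tilde\chi_\alpha(k_1 k k_2)=\varphi(D_\alpha^\prime k_1 k k_2 v D_\alpha^\prime)
    =\varphi\big(k_1 (D_\alpha^\prime k v D_\alpha^\prime) k_2\big),
\]
where $k_1$ is moved to the left past $D_\alpha^\prime$ and $k_2$ is moved to the right past $v$ (central) and then past $D_\alpha^\prime$ (commuting). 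Since $K_3\subset K$, the $K$-bi-invariance of $\varphi$ makes the last expression equal to $\varphi(D_\alpha^\prime k v D_\alpha^\prime)=\tilde\chi_\alpha(k)$, which is exactly $K_3$-bi-invariance.

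For the norm estimate I would exhibit the symbol of $\check{\tilde{\chi}}_\alpha$ as a reindexing of the symbol of $\check\varphi$. Using $\check\varphi(g,h)=\varphi(g^{-1}h)$, for $k,k^\prime\in K$ one has
\[
  \check{\tilde{\chi}}_\alpha(k,k^\prime)=\tilde\chi_\alpha(k^{-1}k^\prime)=\varphi(D_\alpha^\prime k^{-1} k^\prime v D_\alpha^\prime)
    =\check\varphi\big(k (D_\alpha^\prime)^{-1},\,k^\prime v D_\alpha^\prime\big).
\]
The maps $k\mapsto k(D_\alpha^\prime)^{-1}$ and $k^\prime\mapsto k^\prime v D_\alpha^\prime$ both send $K$ into $G$, so the kernel $\check{\tilde{\chi}}_\alpha$ on $K\times K$ is obtained from the kernel $\check\varphi$ on $G\times G$ by relabeling rows and columns. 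Given a finite set $F\subset K$, the matrix $\big(\check{\tilde{\chi}}_\alpha(k_i,k_j)\big)$ is the compression of the principal submatrix of $\check\varphi$ on the finite set $\{k_i(D_\alpha^\prime)^{-1}\}\cup\{k_j v D_\alpha^\prime\}\subset G$ to the indicated rows and columns; since selecting rows and columns does not increase the $S^p$-Schur-multiplier norm, Lemma~\ref{lem:finitecharacterization} applied on $G$ bounds its norm by $\|\check\varphi\|_{MS^p(L^2(G))}$. Taking the supremum over finite $F$ and applying Lemma~\ref{lem:finitecharacterization} once more on $K$ then gives $\|\check{\tilde{\chi}}_\alpha\|_{MS^p(L^2(K))}\le\|\check\varphi\|_{MS^p(L^2(G))}$.

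The only genuine subtlety, and the reason the estimate is slightly less transparent than a literal restriction, is that the row relabeling $k\mapsto k(D_\alpha^\prime)^{-1}$ and the column relabeling $k^\prime\mapsto k^\prime v D_\alpha^\prime$ are \emph{different} maps; one must therefore invoke the invariance of the Schur-multiplier norm under independent relabeling of rows and columns (equivalently, that passing to a non-principal rectangular submatrix does not increase the $S^p$-norm), rather than restriction to a principal submatrix. This is precisely the mechanism already used silently in Lemma~\ref{lem:fromGtoK}, so once the placement of $v$ and of the two factors $D_\alpha^\prime$ is tracked correctly the argument is routine bookkeeping.
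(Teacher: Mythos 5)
Your proposal is correct and is essentially the paper's own argument: the paper omits the proof of Lemma~\ref{lem:chialpha}, referring to the proof of Lemma~\ref{lem:fromGtoK}, which establishes bi-invariance via the same commutation relations and obtains the norm bound from Lemma~\ref{lem:finitecharacterization} using that $D_{\alpha}^{\prime}KvD_{\alpha}^{\prime}\subset G$. Your explicit observation that the rows and columns are relabelled by \emph{different} injections of $K$ into $G$, so that one needs stability of the $S^p$-Schur-multiplier norm under passing to rectangular (non-principal) submatrices, correctly fills in the detail the paper leaves implicit.
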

Consider the restriction $\chi_{\alpha}=\tilde{\chi}_{\alpha}\vert_{K_2}$, which is a $K_3$-bi-invariant multiplier on $K_2$. It follows that $\chi_{\alpha}(u)=\chi_{\alpha}^0(a^2-b^2+c^2-d^2)$, where $u \in K_2$, and where $a,b,c,d$ are as before, and $\|\check{\chi}_{\alpha}\|_{MS^p(L^2(K_2))} \leq \|\check{\varphi}\|_{MS^p(L^2(G))}$.

Suppose that $\alpha_1 \geq \alpha_2 \geq 0$ and let $D(\alpha_1,\alpha_2)$ be as defined above. Again, if we find an element of the form $D_{\alpha}^{\prime}uvD_{\alpha}^{\prime}$ in $KD(\alpha_1,\alpha_2)K$, where now $u$ has to be an element of $\SU(2)$, we can relate the value of a $K$-bi-invariant multiplier $\varphi$ to the value of the multiplier $\chi_{\alpha}$. This again only works for certain $\alpha_1,\alpha_2 \geq 0$. Consider a general element of $\SU(2)$,
\[
  u=\left( \begin{array}{cc} a+ib & -c+id \\ c+id & a-ib \end{array} \right),
\]
with $a^2+b^2+c^2+d^2=1$. For a proof of the following, see \cite[Lemma 3.15]{haagerupdelaat1}.
\begin{lem} \label{lem:circleseqs}
Let $\alpha \geq 0$ and $\beta \geq \gamma \geq 0$, and let $u,v \in K$ be of the form as in \eqref{eq:uform} and \eqref{eq:v} with respect to the identification of $K$ with $\U(2)$. Then $D_{\alpha}^{\prime}uvD_{\alpha}^{\prime} \in KD(\beta,\gamma)K$ if and only if
\begin{equation} \nonumber
  \begin{cases}
    & \sinh^2 \beta + \sinh^2 \gamma = \sinh^2 (2\alpha), \\  
    & \sinh \beta \sinh \gamma = \frac{1}{2}\sinh^2(2\alpha)|r|,
  \end{cases}
  \end{equation}
where $r=a^2-b^2+c^2-d^2$.
\end{lem}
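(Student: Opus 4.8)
The plan is to use the uniqueness of the $\overline{A^{+}}$-part in the decomposition $G=K\overline{A^{+}}K$: since that part is unique, the element $g:=D_\alpha'uvD_\alpha'$ lies in $KD(\beta,\gamma)K$ if and only if $g$ and $D(\beta,\gamma)$ have the same $\overline{A^{+}}$-part. The explicit form of $K$ shows that $K$ consists of orthogonal matrices, and $D_\alpha'=(D_\alpha')^t$, so the singular data of $g$ are encoded by the positive-definite matrix $g^tg$. For $g=k_1 a k_2$ with $a=D(\beta,\gamma)$ one has $g^tg=k_2^{-1}a^2k_2$, and conversely the multiset of eigenvalues $\{e^{\pm 2\beta},e^{\pm 2\gamma}\}$ determines $(\beta,\gamma)$ uniquely under $\beta\ge\gamma\ge0$. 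Hence membership in $KD(\beta,\gamma)K$ is equivalent to the statement that $g^tg$ and $D(\beta,\gamma)^2=\diag(e^{2\beta},e^{2\gamma},e^{-2\beta},e^{-2\gamma})$ have the same eigenvalues, and the whole lemma reduces to matching their characteristic polynomials.

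Next I would exploit the symplectic structure. Since $g,D(\beta,\gamma)\in\Sp(2,\bbR)$, one checks $g^t\in\Sp(2,\bbR)$, so $g^tg$ is symplectic; its eigenvalues occur in reciprocal pairs and its characteristic polynomial is palindromic of the form $\lambda^4-p_1\lambda^3+p_2\lambda^2-p_1\lambda+1$. Thus the polynomial is pinned down by just two coefficients, for which I take the trace and the second elementary symmetric function $e_2$ of the eigenvalues. For $D(\beta,\gamma)^2$ a direct computation, using $\cosh 2\beta=1+2\sinh^2\beta$, gives $\Tr=4+4(\sinh^2\beta+\sinh^2\gamma)$ and $e_2=6+8(\sinh^2\beta+\sinh^2\gamma)+16\sinh^2\beta\sinh^2\gamma$. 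Matching these two invariants is therefore equivalent to membership in $KD(\beta,\gamma)K$, and this is exactly what turns the final statement into an ``if and only if''.

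The computational core is to evaluate $\Tr(g^tg)$ and $e_2(g^tg)$ in terms of $\alpha$ and the entries $a,b,c,d$ of $u$. Writing $uv=A+iB$ with $A,B$ real $2\times2$, the image of $uv$ in $K$ is $\left(\begin{smallmatrix}A&-B\\ B&A\end{smallmatrix}\right)$, which is orthogonal, while $(D_\alpha')^2=\diag(e^{2\alpha}I_2,e^{-2\alpha}I_2)$. A cyclic rearrangement gives $\Tr(g^tg)=\Tr\big((D_\alpha')^2(uv)^t(D_\alpha')^2(uv)\big)=(e^{4\alpha}+e^{-4\alpha})\Tr(A^tA)+2\Tr(B^tB)$, which, using $\Tr(A^tA)+\Tr(B^tB)=\Tr((uv)^*(uv))=2$, equals $4\sinh^2(2\alpha)\Tr(A^tA)+4$. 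The key simplification is that the factor $v=\tfrac1{\sqrt2}(1+i)I_2$ forces $\Tr(A^tA)=1$: indeed $A=\tfrac1{\sqrt2}(U_1-U_2)$ and $B=\tfrac1{\sqrt2}(U_1+U_2)$ with $U_1=\left(\begin{smallmatrix}a&-c\\ c&a\end{smallmatrix}\right)$, $U_2=\left(\begin{smallmatrix}b&d\\ d&-b\end{smallmatrix}\right)$, so that $\Tr(U_1^tU_2)=0$ and $\Tr(A^tA)=a^2+b^2+c^2+d^2=1$. Hence $\Tr(g^tg)=4+4\sinh^2 2\alpha$, and equating traces yields the first equation $\sinh^2\beta+\sinh^2\gamma=\sinh^2 2\alpha$.

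The hard part, as expected, is the analogous but more delicate computation of $e_2(g^tg)$, where one must track the off-diagonal blocks and the cross terms $A^tB,B^tA$ and verify that the entire dependence on $u$ collapses into $r=a^2-b^2+c^2-d^2$; the outcome should be $e_2(g^tg)=6+8\sinh^2 2\alpha+4\sinh^4(2\alpha)\,r^2$. Equating $e_2$'s and using the trace relation already established then gives $16\sinh^2\beta\sinh^2\gamma=4\sinh^4(2\alpha)\,r^2$, i.e. $\sinh\beta\sinh\gamma=\tfrac12\sinh^2(2\alpha)\,|r|$, taking nonnegative square roots (legitimate since $\beta,\gamma\ge0$). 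Together with the first equation this is precisely the stated system, and by the reduction of the first two paragraphs the equivalence runs in both directions. As a consistency check one can take $u=I$, where $r=1$ and $g^tg$ has doubled eigenvalues, forcing $\beta=\gamma$ with $\sinh^2\beta=\tfrac12\sinh^2 2\alpha$, in agreement with both equations.
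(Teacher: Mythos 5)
The paper itself does not prove this lemma; it refers the reader to \cite[Lemma 3.15]{haagerupdelaat1}, and your strategy --- characterising membership in $KD(\beta,\gamma)K$ by the eigenvalues of $g^tg$, and pinning these down by the two invariants $\Tr$ and $e_2$, which suffice because $g^tg$ is symplectic and therefore has a reciprocal characteristic polynomial --- is essentially the route taken in that reference. Your reduction via uniqueness of the $\overline{A^{+}}$-component is sound, the values of $\Tr$ and $e_2$ for $D(\beta,\gamma)^2$ are correct, and the trace computation for $g^tg$, including the observation that the factor $v$ forces $\Tr(A^tA)=1$, checks out.

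The one genuine gap is that the step you yourself identify as the hard part is asserted rather than carried out: you write that $e_2(g^tg)$ ``should be'' $6+8\sinh^2 2\alpha+4\sinh^4(2\alpha)r^2$, and it is precisely there that one must see the dependence on $u$ collapse to $r$. The assertion is in fact correct, and it closes as follows. With $U_1=\left(\begin{smallmatrix}a&-c\\ c&a\end{smallmatrix}\right)$, $U_2=\left(\begin{smallmatrix}b&d\\ d&-b\end{smallmatrix}\right)$ and $T=U_1^tU_2$, one checks that $T$ is symmetric with $\Tr T=0$ and $\Tr(T^2)=2(a^2+c^2)(b^2+d^2)=\tfrac{1-r^2}{2}$, and that $A^tA=\tfrac12 I-T$, $B^tB=\tfrac12 I+T$, $A^tB=B^tA=\tfrac{r}{2}I$. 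Substituting these into $N=(D_\alpha')^2k^t(D_\alpha')^2k$, which is similar to $g^tg$, and computing $e_2=\tfrac12\bigl[(\Tr N)^2-\Tr(N^2)\bigr]$ yields exactly $6+8\sinh^2 2\alpha+4\sinh^4(2\alpha)r^2$; equating this with $6+8(\sinh^2\beta+\sinh^2\gamma)+16\sinh^2\beta\sinh^2\gamma$ and using the trace relation gives the second equation, with nonnegative square roots legitimate since $\beta,\gamma,\alpha\ge0$. Two small remarks: the lemma as stated takes $u$ of the form \eqref{eq:uform} (so $d=0$), while your computation handles general $(a,b,c,d)$ on the unit $3$-sphere, which is harmless and indeed what is needed in the later applications; and in the converse direction you should state explicitly that matching characteristic polynomials of the positive definite matrices $g^tg$ and $D(\beta,\gamma)^2$ identifies the unique $\overline{A^{+}}$-component of $g$ with $D(\beta,\gamma)$, which is where the normalisation $\beta\ge\gamma\ge0$ enters.
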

Now we can combine the results that we obtained for both Gelfand pairs.
\begin{lem} \label{lem:betagamma}
  Let $\beta \geq \gamma \geq 0$. Then the equations
\begin{equation} \label{eq:betagamma}
\begin{split}
   \sinh^2(2s) + \sinh^2s &= \sinh^2 \beta + \sinh^2\gamma, \\
   \sinh(2t)\sinh t &= \sinh \beta \sinh \gamma
\end{split}
\end{equation}
have unique solutions $s=s(\beta,\gamma)$, $t=t(\beta,\gamma)$ in the interval $[0,\infty)$. Moreover,
\begin{equation} \label{eq:betagamma3}
  s \geq \frac{\beta}{4}, \qquad t \geq \frac{\gamma}{2}.
\end{equation}
\end{lem}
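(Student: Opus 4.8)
The plan is to reduce the whole statement to the monotonicity of the two single-variable functions appearing on the left-hand sides. Define $F,G\colon[0,\infty)\to[0,\infty)$ by $F(s)=\sinh^2(2s)+\sinh^2 s$ and $G(t)=\sinh(2t)\sinh t$. Each is built from nonnegative, continuous, strictly increasing functions that vanish at the origin (as a sum, respectively a product), so both $F$ and $G$ are continuous and strictly increasing on $[0,\infty)$ with $F(0)=G(0)=0$ and $F(s),G(t)\to\infty$ as the argument tends to $\infty$. Hence $F$ and $G$ are strictly increasing bijections of $[0,\infty)$ onto itself. Since $\sinh^2\beta+\sinh^2\gamma$ and $\sinh\beta\sinh\gamma$ are nonnegative, existence and uniqueness of the solutions $s=s(\beta,\gamma)$ and $t=t(\beta,\gamma)$ follow immediately from these two bijectivity statements.

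For the lower bounds I would exploit the monotonicity of $F$ and $G$ together with two elementary comparisons. Consider $t$ first, as it is the easier of the two. Since $\beta\geq\gamma\geq0$ we have $\sinh\beta\geq\sinh\gamma$, so $G(t)=\sinh\beta\sinh\gamma\geq\sinh^2\gamma$; on the other hand $G(\gamma/2)=\sinh\gamma\,\sinh(\gamma/2)\leq\sinh^2\gamma$ because $\sinh(\gamma/2)\leq\sinh\gamma$. Combining these with the strict monotonicity of $G$ gives $G(t)\geq\sinh^2\gamma\geq G(\gamma/2)$, whence $t\geq\gamma/2$.

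For $s$, dropping the nonnegative term $\sinh^2\gamma$ gives $F(s)=\sinh^2\beta+\sinh^2\gamma\geq\sinh^2\beta$, so by monotonicity of $F$ it suffices to establish the inequality $F(\beta/4)\leq\sinh^2\beta$. Writing $x=\beta/4$, this is exactly $\sinh^2(2x)+\sinh^2 x\leq\sinh^2(4x)$. Here I would invoke the double-angle identity $\sinh(4x)=2\sinh(2x)\cosh(2x)$, which yields $\sinh^2(4x)=4\sinh^2(2x)\cosh^2(2x)\geq 4\sinh^2(2x)$ since $\cosh(2x)\geq1$, while $\sinh^2(2x)+\sinh^2 x\leq 2\sinh^2(2x)$ because $\sinh x\leq\sinh(2x)$. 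Chaining these gives $F(\beta/4)\leq 2\sinh^2(2x)\leq 4\sinh^2(2x)\leq\sinh^2(4x)=\sinh^2\beta$, and monotonicity of $F$ then forces $s\geq\beta/4$.

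I expect the only genuinely nontrivial point to be the inequality $F(\beta/4)\leq\sinh^2\beta$ for the $s$-bound, which is where the double-angle trick does the real work; everything else is a routine check of monotonicity and the harmless comparisons $\sinh(\gamma/2)\leq\sinh\gamma$ and $\sinh x\leq\sinh(2x)$.
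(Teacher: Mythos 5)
Your proof is correct, and it takes essentially the same route as the proof the paper defers to (Lemma 3.16 of \cite{haagerupdelaat1}): strict monotonicity of $s\mapsto\sinh^2(2s)+\sinh^2 s$ and $t\mapsto\sinh(2t)\sinh t$ yields existence and uniqueness, and the bounds $s\geq\beta/4$, $t\geq\gamma/2$ follow from the same elementary double-angle comparisons ($\sinh^2(4x)=4\sinh^2(2x)\cosh^2(2x)\geq 2\sinh^2(2x)$ and $\sinh(\gamma/2)\leq\sinh\gamma$). No gaps.
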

A proof of this Lemma can be found in \cite[Lemma 3.16]{haagerupdelaat1}.
\begin{center}
\begin{pspicture}(-1,-1) (8,6.5)
    \pscustom[linestyle=none,fillcolor=lightgray,fillstyle=solid,algebraic]{\psplot{0}{5.5}{0}\psplot{5.5}{0}{x}}
    \psaxes[labels=none,ticks=none]{->}(0,0)(0,0)(5.5,5.5)
    \psline[linewidth=0.5pt](0,0)(5.5,5.5)
    \psline[linewidth=0.5pt](0,0)(5.5,2.75)
    \pscircle*(3,1.5){2pt}
    \pscircle*(4,2){2pt}
    \pscircle*(4.35,1.03){2pt}
    \psplot[plotpoints=1000]{3}{4.35}{ 4.5 x div }
    \psplot[plotpoints=1000,algebraic]{4}{4.355}{ sqrt(20-x^(2)) }
    \rput(6.3,5.5){$\alpha_1=\alpha_2$}
    \rput(6.4,2.75){$\alpha_1=2\alpha_2$}
    \rput(5.9,0){$\alpha_1$}
    \rput(0.4,5.5){$\alpha_2$}
    \rput(2.7,1.75){$(2t,t)$}
    \rput(4.2,2.45){$(2s,s)$}
    \rput(4.92,1.0){$(\beta,\gamma)$}
\end{pspicture}
\end{center}
The figure above shows the relative position of $(\beta,\gamma)$, $(2s,s)$ and $(2t,t)$ as in Lemma \ref{lem:betagammacir} and Lemma \ref{lem:betagammahyp} below. Note that $(\beta,\gamma)$ and $(2s,s)$ lie on a path in the $(\alpha_1,\alpha_2)$-plane of the form $\sinh^2 \alpha_1 + \sinh^2 \alpha_2 = \textrm{ constant}$, and $(\beta,\gamma)$ and $(2t,t)$ lie on a path of the form $\sinh \alpha_1 \sinh \alpha_2 = \textrm{constant}$.
\begin{lem} \label{lem:betagammacir}
  For $p > 4$, there exists a constant $C_3(p) > 0$ (depending only on $p$) such that whenever $\beta \geq \gamma \geq 0$ and $s=s(\beta,\gamma)$ is chosen as in Lemma \ref{lem:betagamma}, then for all $\varphi \in C(K \backslash G \slash K)$ for which $\check{\varphi} \in MS^p(L^2(G))$,
\begin{equation} \nonumber
  |\varphi(D(\beta,\gamma))-\varphi(D(2s,s))| \leq C_3(p) e^{-\frac{\beta-\gamma}{4}(\frac{1}{4}-\frac{1}{p})} \|\check{\varphi}\|_{MS^p(L^2(G))}.
\end{equation}
\end{lem}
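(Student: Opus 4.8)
The plan is to exploit the Gelfand pair $(\SU(2),\SO(2))$ via Lemma \ref{lem:chialpha} and Lemma \ref{lem:circleseqs}, comparing the two points $(\beta,\gamma)$ and $(2s,s)$, which by Lemma \ref{lem:betagamma} lie on the same curve $\sinh^2\alpha_1+\sinh^2\alpha_2=\sinh^2\beta+\sinh^2\gamma$. First I would fix $\alpha\geq 0$ by $\sinh^2(2\alpha)=\sinh^2\beta+\sinh^2\gamma$; this is legitimate since $\alpha\mapsto\sinh^2(2\alpha)$ is a bijection of $[0,\infty)$. Because both points lie on this circle, Lemma \ref{lem:circleseqs} applies to each: I would produce $u_1,u_2\in\SU(2)$ (of the form used there) with parameters $r_1,r_2\in[0,1]$ satisfying $\frac12\sinh^2(2\alpha)\,r_1=\sinh(2s)\sinh s$ and $\frac12\sinh^2(2\alpha)\,r_2=\sinh\beta\sinh\gamma$; these are solvable with $r_i\le 1$ because $\sinh^2\alpha_1+\sinh^2\alpha_2\geq 2\sinh\alpha_1\sinh\alpha_2$. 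Using $K$-bi-invariance of $\varphi$ and the definition of $\chi_\alpha$ restricted to $K_2$, I then obtain $\varphi(D(2s,s))=\chi_\alpha^0(r_1)$ and $\varphi(D(\beta,\gamma))=\chi_\alpha^0(r_2)$, so the left-hand side equals $|\chi_\alpha^0(r_2)-\chi_\alpha^0(r_1)|$.

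The core estimate is a bound on $|r_2-r_1|$. Writing $r_2=2\sinh\beta\sinh\gamma/(\sinh^2\beta+\sinh^2\gamma)$, the elementary inequality $\sinh\beta/\sinh\gamma\geq\cosh(\beta-\gamma)$ gives $r_2\leq 2\sinh\gamma/\sinh\beta\leq 2/\cosh(\beta-\gamma)\leq 4e^{-(\beta-\gamma)}$. Similarly $r_1=4\cosh s/(4\cosh^2 s+1)\leq 1/\cosh s\leq 2e^{-s}\leq 2e^{-\beta/4}$, where the last step uses $s\geq\beta/4$ from Lemma \ref{lem:betagamma}. Since $\gamma\geq 0$, both $e^{-(\beta-\gamma)}$ and $e^{-\beta/4}$ are bounded by $e^{-(\beta-\gamma)/4}$, whence $|r_2-r_1|\leq r_1+r_2\leq 6e^{-(\beta-\gamma)/4}$.

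To finish I would split into two regimes. If $\beta-\gamma$ exceeds a fixed absolute constant $c_0$ (chosen large enough that the bounds above force $r_1,r_2\leq\frac12$, via $\cosh(\beta-\gamma)\geq 2+\sqrt3$ for $r_2$, and $\beta>c_0$ together with $s\geq\beta/4$ for $r_1$), both parameters lie in $[-\frac12,\frac12]$, so Lemma \ref{lem:behaviorsu2} applies to $\chi_\alpha^0$. Combined with the norm bound $\|\check\chi_\alpha\|_{MS^p(L^2(K_2))}\leq\|\check\varphi\|_{MS^p(L^2(G))}$ coming from Lemma \ref{lem:chialpha}, this yields
\[
  |\chi_\alpha^0(r_2)-\chi_\alpha^0(r_1)|\leq\hat C(p)\,\|\check\varphi\|_{MS^p(L^2(G))}\,\bigl(6e^{-(\beta-\gamma)/4}\bigr)^{\frac14-\frac1p},
\]
which is of the desired form. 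If instead $\beta-\gamma\leq c_0$, then $e^{-\frac{\beta-\gamma}{4}(\frac14-\frac1p)}$ is bounded below by a positive constant, and the trivial bound $|\varphi(D(\beta,\gamma))-\varphi(D(2s,s))|\leq 2\|\varphi\|_\infty\leq 2\|\check\varphi\|_{MS^p(L^2(G))}$ (using $\|\cdot\|_\infty\leq\|\cdot\|_{MS^p}$) can be absorbed into a suitable $C_3(p)$. Taking the larger of the two constants gives the claim.

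I expect the main obstacle to be the bookkeeping in the first regime: checking cleanly that a single threshold $c_0$, depending only on absolute constants and not on $p$, simultaneously forces both $r_1$ and $r_2$ into $[-\frac12,\frac12]$ so that Lemma \ref{lem:behaviorsu2} is genuinely applicable. The exponential-rate estimate itself, once the two monotone bounds on $r_1$ and $r_2$ are in place, is then routine.
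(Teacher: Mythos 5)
Your proposal is correct and follows essentially the same route as the paper: pass to the Gelfand pair $(\SU(2),\SO(2))$ via Lemma \ref{lem:chialpha}, represent $D(\beta,\gamma)$ and $D(2s,s)$ as $KD'_\alpha u_i v D'_\alpha K$ with the same $\alpha$ using Lemma \ref{lem:circleseqs}, bound the double-coset parameters by $r\lesssim e^{-(\beta-\gamma)}$ and $r\lesssim e^{-s}\le e^{-\beta/4}$ respectively, and invoke the H\"older estimate of Lemma \ref{lem:behaviorsu2} once both parameters are forced into $[-\tfrac12,\tfrac12]$, handling small $\beta-\gamma$ by the trivial bound $\|\varphi\|_\infty\le\|\check\varphi\|_{MS^p(L^2(G))}$. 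The paper simply fixes the threshold $\beta-\gamma\ge 8$ explicitly (which makes both parameters at most $\tfrac12$, resolving the bookkeeping worry you raise), and your constants differ only cosmetically.
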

\begin{proof}
  Assume first that $\beta - \gamma \geq 8$. Let $\alpha \in [0,\infty)$ be the unique solution to $\sinh^2 \beta + \sinh^2 \gamma=\sinh^2(2\alpha)$, and observe that $2\alpha \geq \beta \geq 2$, so in particular $\alpha > 0$. Define
\[
  r_1=\frac{2\sinh \beta \sinh \gamma}{\sinh^2 \beta+\sinh^2 \gamma} \in [0,1],
\]
and $a_1=\left(\frac{1+r_1}{2}\right)^{\frac{1}{2}}$ and $b_1=\left(\frac{1-r_1}{2}\right)^{\frac{1}{2}}$. Furthermore, put
\[
  u_1=\left(\begin{array}{cc} a_1+ib_1 & 0 \\ 0 & a_1-ib_1 \end{array}\right) \in \SU(2),
\]
and let
\[
  v=\left(\begin{array}{cc} \frac{1}{\sqrt{2}}(1+i) & 0 \\ 0 & \frac{1}{\sqrt{2}}(1+i) \end{array}\right),
\]
as previously defined. We now have $2\sinh \beta \sinh \gamma=\sinh^2(2\alpha)r_1$, and $a_1^2-b_1^2=r_1$, so by Lemma \ref{lem:circleseqs}, we have $D_{\alpha}^{\prime}u_1vD_{\alpha}^{\prime} \in KD(\beta,\gamma)K$. Let $s=s(\beta,\gamma)$ be as in Lemma \ref{lem:betagamma}. Then $s \geq 0$ and $\sinh^2(2s)+\sinh^2s=\sinh^2\beta+\sinh^2\gamma=\sinh^2(2\alpha)$. Put
\[
  r_2=\frac{2\sinh (2s) \sinh s}{\sinh^2(2s)+\sinh^2s} \in [0,1],
\]
and
\[
  u_2=\left(\begin{array}{cc} a_2+ib_2 & 0 \\ 0 & a_2-ib_2 \end{array}\right) \in \SU(2),
\]
where $a_2=\left(\frac{1+r_2}{2}\right)^{\frac{1}{2}}$ and $b_2=\left(\frac{1-r_2}{2}\right)^{\frac{1}{2}}$. Since $a_2^2-b_2^2=r_2$, it follows again by Lemma \ref{lem:circleseqs} that $D_{\alpha}^{\prime}u_2vD_{\alpha}^{\prime} \in KD(2s,s)K$. Now, let $\chi_{\alpha}(u)=\varphi(D_{\alpha}^{\prime}uvD_{\alpha}^{\prime})$ for $u \in K_2 \cong \SU(2)$. Then by Lemma \ref{lem:behaviorsu2} and Lemma \ref{lem:chialpha}, it follows that
\[
  |\chi_{\alpha}(u_1)-\chi_{\alpha}(u_2)|=|\chi_{\alpha}^0(r_1)-\chi_{\alpha}^0(r_2)|\leq \hat{C}(p)|r_1-r_2|^{\frac{1}{4}-\frac{1}{p}}\|\check{\varphi}\|_{MS^p(L^2(G))},
\]
provided that $r_1,r_2 \leq \frac{1}{2}$. Hence, under this assumption, using the $K$-bi-invariance of $\varphi$, we get
\begin{equation} \label{eq:betagammas}
  |\varphi(D(\beta,\gamma))-\varphi(D(2s,s))| \leq \hat{C}(p)|r_1-r_2|^{\frac{1}{4}-\frac{1}{p}}\|\check{\varphi}\|_{MS^p(L^2(G))}.
\end{equation}
Note that $r_1 \leq \frac{2\sinh\beta\sinh\gamma}{\sinh^2\beta}=2\frac{\sinh\gamma}{\sinh\beta}$. Hence, using $\beta\geq\gamma+8\geq\gamma$, we get $r_1 \leq 2\frac{e^{\gamma}(1-e^{-2\gamma})}{e^{\beta}(1-e^{-2\beta})}\leq 2e^{\gamma-\beta}$. In particular, $r_1 \leq 2e^{-8} \leq \frac{1}{2}$. Similarly, $r_2 \leq 2\frac{\sinh s}{\sinh{2s}}=\frac{1}{\cosh s}\leq 2e^{-s}$. By Lemma \ref{lem:betagamma}, equation \eqref{eq:betagamma3}, we obtain that $r_2 \leq 2e^{-\frac{\beta}{4}}\leq 2e^{\frac{\gamma-\beta}{4}} \leq 2e^{-2} \leq \frac{1}{2}$. In particular, \eqref{eq:betagammas} holds, and since $|r_1-r_2| \leq \max\{r_1,r_2\} \leq 2e^{\frac{\gamma-\beta}{4}}$, we have proved that
\begin{equation} \label{eq:betagammas2}
  |\varphi(D(\beta,\gamma))-\varphi(D(2s,s))| \leq \hat{C}(p)2^{\frac{1}{4}-\frac{1}{p}}e^{\frac{\gamma-\beta}{4}(\frac{1}{4}-\frac{1}{p})}\|\check{\varphi}\|_{MS^p(L^2(G))}
\end{equation}
under the assumption that $\beta\geq\gamma+8$. If $\gamma \leq \beta < \gamma+8$, we get from $\|\varphi\|_{\infty} \leq \|\check{\varphi}\|_{MS^p(L^2(G))}$ that $|\varphi(D(\beta,\gamma))-\varphi(D(2s,s))| \leq 2\|\check{\varphi}\|_{MS^p(L^2(G))}$. It follows that
\[
  |\varphi(D(\beta,\gamma))-\varphi(D(2s,s))| \leq C_3(p)e^{\frac{\gamma-\beta}{4}(\frac{1}{4}-\frac{1}{p})}\|\check{\varphi}\|_{MS^p(L^2(G))}
\]
for all $(\beta,\gamma)$ with $\beta \geq \gamma \geq 0$, if for all $p \in (1,\infty)$, we put $C_3(p)=\max\{\hat{C}(p)2^{\frac{1}{4}-\frac{1}{p}},2e^{\frac{1}{2}}\}$.
\end{proof}
\begin{lem} \label{lem:betagammahyp}
  For $p > 12$, there exists a constant $C_4(p) > 0$ (depending only on $p$) such that whenever $\beta \geq \gamma \geq 0$ and $t=t(\beta,\gamma)$ is chosen as in Lemma \ref{lem:betagamma}, then for all $\varphi \in C(K \backslash G \slash K)$ for which $\check{\varphi} \in MS^p(L^2(G))$,
\begin{equation} \nonumber
  |\varphi(D(\beta,\gamma))-\varphi(D(2t,t))| \leq C_4(p)e^{-\frac{\gamma}{4}(\frac{1}{4}-\frac{3}{p})}\|\check{\varphi}\|_{MS^p(L^2(G))}.
\end{equation}
\end{lem}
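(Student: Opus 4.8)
The plan is to mirror the proof of Lemma~\ref{lem:betagammacir}, replacing the Gelfand pair $(\SU(2),\SO(2))$ by $(\U(2),\U(1))$ and the auxiliary map $k \mapsto D_{\alpha}'kvD_{\alpha}'$ by the map $k \mapsto D_{\alpha}kD_{\alpha}$ from Lemma~\ref{lem:fromGtoK}. The conserved quantity for the hyperbola pair of Lemma~\ref{lem:hyperbolaseqs} is $\sinh\beta\sinh\gamma$, which by Lemma~\ref{lem:betagamma} equals $\sinh(2t)\sinh t$; hence both $D(\beta,\gamma)$ and $D(2t,t)$ can be realised by elements of the form $D_{\alpha}uD_{\alpha}$ for one and the same $\alpha$. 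The rigidity to respect is that the H\"older estimate of Lemma~\ref{lem:behavioru2} is available only on the circle $|z|=1/\sqrt2$, so I would pin down $\alpha$ by $\sinh^2\alpha = 2\sinh\beta\sinh\gamma$, which forces $1-a^2-b^2 = \tfrac12$, i.e.\ $|u_{11}| = 1/\sqrt2$, for the matrices of the form~\eqref{eq:uform} that I produce.

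Concretely, assuming first that $\gamma$ exceeds a suitable absolute constant, I would set $a_1 = (\sinh\beta-\sinh\gamma)/\sinh(2\alpha)$ and $b_1 = (\tfrac12 - a_1^2)^{1/2}$, and let $u_1$ be the matrix~\eqref{eq:uform} with these entries; by Lemma~\ref{lem:hyperbolaseqs} and the choice of $\alpha$ one then has $D_{\alpha}u_1D_{\alpha} \in KD(\beta,\gamma)K$. Likewise, with $a_2 = (\sinh(2t)-\sinh t)/\sinh(2\alpha)$, $b_2 = (\tfrac12 - a_2^2)^{1/2}$ and the corresponding $u_2$, the identity $\sinh(2t)\sinh t = \tfrac12\sinh^2\alpha$ gives $D_{\alpha}u_2D_{\alpha} \in KD(2t,t)K$. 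Writing $\psi_{\alpha}(k) = \varphi(D_{\alpha}kD_{\alpha})$ as in Lemma~\ref{lem:fromGtoK}, the $K$-bi-invariance of $\varphi$ gives $\psi_{\alpha}(u_1) = \varphi(D(\beta,\gamma))$ and $\psi_{\alpha}(u_2) = \varphi(D(2t,t))$. Since $u_{11}^{(i)} = a_i+ib_i$ has modulus $1/\sqrt2$, we may write $u_{11}^{(i)} = e^{i\theta_i}/\sqrt2$ with $\cos\theta_i = \sqrt2\,a_i$, and then Lemma~\ref{lem:behavioru2} together with Lemma~\ref{lem:fromGtoK} yields
\[
  |\varphi(D(\beta,\gamma)) - \varphi(D(2t,t))| \leq \tilde C(p)\,|\theta_1-\theta_2|^{\frac18-\frac{3}{2p}}\,\|\check\varphi\|_{MS^p(L^2(G))}.
\]

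It remains to bound $|\theta_1-\theta_2|$. Using $\sinh(2\alpha) = 2\sinh\alpha\cosh\alpha \geq 2\sinh^2\alpha = 4\sinh\beta\sinh\gamma = 4\sinh(2t)\sinh t$, I would estimate $a_1 \leq 1/(4\sinh\gamma)$ and $a_2 \leq 1/(4\sinh t) \leq 1/(4\sinh(\gamma/2))$, where the last step invokes $t \geq \gamma/2$ from \eqref{eq:betagamma3}. Hence both $a_i$ are of order $e^{-\gamma/2}$ and, for $\gamma$ large, small; in particular the $\theta_i$ lie near $\pi/2$, where $\cos$ is bi-Lipschitz, so $|\theta_1-\theta_2| \leq c\,|a_1-a_2| \leq c\max\{a_1,a_2\}$, which is again of order $e^{-\gamma/2}$. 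Feeding this into the displayed inequality and using $\frac18-\frac{3}{2p} = \frac12(\frac14-\frac3p)$ produces the bound $e^{-\frac\gamma4(\frac14-\frac3p)}$, whose exponent is positive exactly because $p>12$. Finally, for $\gamma$ below the threshold the right-hand side is bounded below by a positive constant, so the trivial estimate $|\varphi(D(\beta,\gamma))-\varphi(D(2t,t))| \leq 2\|\varphi\|_{\infty} \leq 2\|\check\varphi\|_{MS^p(L^2(G))}$ covers that range after enlarging $C_4(p)$.

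The main obstacle is the hyperbolic bookkeeping in the last paragraph: with $\alpha$ pinned down by the circle constraint one must check that both realisations $u_1,u_2$ genuinely have $a_i \leq 1/\sqrt2$ (so that $b_i$ is real) and in fact $a_i = O(e^{-\gamma/2})$, the decay of $a_2$ resting squarely on the inequality $t \geq \gamma/2$ from Lemma~\ref{lem:betagamma}. The passage from the Cartesian difference $|a_1-a_2|$ to the angular difference $|\theta_1-\theta_2|$ also needs the points kept away from the real axis, which is precisely why forcing the $a_i$ to be small (rather than merely $\leq 1/\sqrt2$) matters.
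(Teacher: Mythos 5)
Your proposal follows the paper's proof essentially verbatim: the same normalisation $\sinh^2\alpha=2\sinh\beta\sinh\gamma$ pinning the matrices to the circle $|u_{11}|=1/\sqrt{2}$, the same $a_j=(\sinh\beta-\sinh\gamma)/\sinh(2\alpha)$ (resp.\ with $(2t,t)$), the same bounds $a_1\leq 1/(4\sinh\gamma)$, $a_2\leq 1/(4\sinh t)$ via $\sinh(2\alpha)\geq 2\sinh^2\alpha$ and $t\geq\gamma/2$, and the same passage to $|\theta_1-\theta_2|=O(e^{-\gamma/2})$ before applying Lemma \ref{lem:behavioru2}. The argument is correct, including the trivial estimate for small $\gamma$.
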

\begin{proof}
  Let $\beta \geq \gamma \geq 0$. Assume first that $\gamma \geq 2$, and let $\alpha \geq 0$ be the unique solution in $[0,\infty)$ to the equation $\sinh\beta\sinh\gamma=\frac{1}{2}\sinh^2\alpha$, and observe that $\alpha>0$, because $\beta \geq \gamma \geq 2$. Put
\[
  a_1=\frac{\sinh\beta-\sinh\gamma}{\sinh (2\alpha)} \geq 0.
\]
Since $\sinh (2\alpha)=2\sinh \alpha \cosh \alpha \geq 2\sinh^2 \alpha$, we have
\[
  a_1 \leq \frac{\sinh \beta}{\sinh(2\alpha)} \leq \frac{\sinh \beta}{2\sinh^2\alpha}=\frac{1}{4\sinh\gamma}.
\]
In particular, $a_1 \leq \frac{1}{4\gamma} \leq \frac{1}{8}$. Put now $b_1=\sqrt{\frac{1}{2}-a_1^2}$. Then $1-a_1^2-b_1^2=\frac{1}{2}$. Hence, $\sinh \beta \sinh \gamma=\sinh^2 \alpha(1-a_1^2-b_1^2)$ and $\sinh \beta - \sinh \gamma=\sinh(2\alpha)a_1$. Let
\[
  u_1=\left(\begin{array}{cc} a_1+ib_1 & -\frac{1}{\sqrt{2}} \\ \frac{1}{\sqrt{2}} & a_1-ib_1 \end{array}\right) \in \SU(2).
\]
By Lemma \ref{lem:hyperbolaseqs}, we have $D_{\alpha}u_1D_{\alpha} \in KD(\beta,\gamma)K$.

By Lemma \ref{lem:betagamma}, we have $\sinh(2t)\sinh t=\sinh \beta \sinh \gamma = \frac{1}{2}\sinh^2 \alpha$. Moreover, by \eqref{eq:betagamma3}, we have $t \geq \frac{\gamma}{2} \geq 1$. By replacing $(\beta,\gamma)$ in the above calculation with $(2t,t)$, we get that the number
\[
  a_2\frac{\sinh (2t)-\sinh t}{\sinh (2\alpha)} \geq 0,
\]
satisfies
\[
  a_2 \leq \frac{1}{4\sinh t} \leq \frac{1}{4\sinh 1} \leq \frac{1}{4}.
\]
Hence, we can put $b_2=\sqrt{\frac{1}{2}-a_2^2}$ and
\[
  u_2=\left(\begin{array}{cc} a_2+ib_2 & -\frac{1}{\sqrt{2}} \\ \frac{1}{\sqrt{2}} & a_2-ib_2 \end{array}\right).
\]
Then
\begin{equation} \nonumber
\begin{split}
  \sinh(2t)\sinh t &= \sinh^2 \alpha(1-a_2^2-b_2^2),\\
  \sinh(2t)-\sinh t &= \sinh(2\alpha)a_2,
\end{split}
\end{equation}
and $u_2 \in \SU(2)$. Hence, by Lemma \ref{lem:hyperbolaseqs}, $D_{\alpha}u_2D_{\alpha} \in KD(2t,t)K$. Put now $\theta_j=\mathrm{arg}(a_j+ib_j)=\frac{\pi}{2}-\sin^{-1} \left(\frac{a_j}{\sqrt{2}}\right)$ for $j=1,2$. Since $0 \leq a_j \leq \frac{1}{2}$ for $j=1,2$, and since $\frac{d}{dt} \sin^{-1} t = \frac{1}{\sqrt{1-t^2}}\leq\sqrt{2}$ for $t \in [0,\frac{1}{\sqrt{2}}]$, it follows that
\begin{equation} \nonumber
\begin{split}  
|\theta_1-\theta_2| &\leq \bigg\vert\sin^{-1} \left(\frac{a_1}{\sqrt{2}}\right) - \sin^{-1} \left(\frac{a_2}{\sqrt{2}}\right)\bigg\vert \\
  &\leq |a_1-a_2| \\
  &\leq \max\{a_1,a_2\} \\
  &\leq \max\left\{\frac{1}{4\sinh \gamma},\frac{1}{4\sinh t}\right\} \\
  &\leq \frac{1}{4\sinh \frac{\gamma}{2}},
\end{split}
\end{equation}
because $t \geq \frac{\gamma}{2}$. Since $\gamma \geq 2$, we have $\sinh \frac{\gamma}{2}=\frac{1}{2}e^{\frac{\gamma}{2}}(1-e^{-\gamma})\geq\frac{1}{4}e^{\frac{\gamma}{2}}$. Hence, $|\theta_1-\theta_2| \leq e^{-\frac{\gamma}{2}}$. Note that $a_j=\frac{1}{\sqrt{2}}e^{i\theta_j}$ for $j=1,2$, so by Lemma \ref{lem:behavioru2} and Lemma \ref{lem:fromGtoK}, the function $\psi_{\alpha}(u)=\varphi(D_{\alpha}uD_{\alpha})$, $u \in \U(2) \cong K$ satisfies
\begin{equation}
\begin{split}
  |\psi_{\alpha}(u_1)-\psi_{\alpha}(u_2)| &\leq \tilde{C}(p)|\theta_1-\theta_2|^{\frac{1}{8}-\frac{3}{2p}}\|\check{\psi}_{\alpha}\|_{MS^p(L^2(K))} \\
    &\leq \tilde{C}(p)e^{-\frac{\gamma}{4}(\frac{1}{4}-\frac{3}{p})}\|\check{\varphi}\|_{MS^p(L^2(G))}.
\end{split}
\end{equation}
Since $D_{\alpha}u_1D_{\alpha} \in KD(\beta,\gamma)K$ and $D_{\alpha}u_2D_{\alpha} \in KD(2t,t)K$, it follows that
\[
  |\varphi(D(\beta,\gamma))-\varphi(D(2t,t))| \leq \tilde{C}(p)e^{-\frac{\gamma}{4}(\frac{1}{4}-\frac{3}{p})}\|\check{\varphi}\|_{MS^p(L^2(G))}
\]
for all $\gamma \geq 2$. For $\gamma$ satisfying $0 < \gamma \leq 2$, we can instead use that $\|\varphi\|_{\infty} \leq \|\check{\varphi}\|_{MS^p(L^2(G))}$. Hence, for all $p \in (1,\infty)$ putting $C_4(p)=\max\{\tilde{C}(p),2e^{\frac{1}{8}}\}$, we obtain
\[
  |\varphi(D(\beta,\gamma))-\varphi(D(2t,t))| \leq C_4(p)e^{-\frac{\gamma}{4}(\frac{1}{4}-\frac{3}{p})}\|\check{\varphi}\|_{MS^p(L^2(G))}
\]
for all $\beta \geq \gamma \geq 0$.
\end{proof}
For a proof of the following lemma, see \cite[Lemma 3.19]{haagerupdelaat1}.
\begin{lem} \label{lem:rhosigma}
  Let $s \geq t \geq 0$. Then the equations
\begin{equation} \label{eq:system1}
\begin{split}
  \sinh^2 \beta + \sinh^2 \gamma &= \sinh^2(2s)+\sinh^2 s, \\
  \sinh \beta \sinh \gamma &= \sinh(2t)\sinh t,
\end{split}
\end{equation}
have a unique solution $(\beta,\gamma) \in \bbR^2$ for which $\beta \geq \gamma \geq 0$. Moreover, if $1 \leq t \leq s \leq \frac{3t}{2}$, then
\begin{equation} \label{eq:system2}
\begin{split}
  |\beta-2s| &\leq 1, \\
  |\gamma+2s-3t| &\leq 1.
\end{split}
\end{equation}
\end{lem}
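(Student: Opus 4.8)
The plan is to reduce both assertions to an elementary analysis of the symmetric system obtained after substituting $u=\sinh\beta$ and $w=\sinh\gamma$. Writing $A=\sinh^2(2s)+\sinh^2 s$ and $B=\sinh(2t)\sinh t$, the system \eqref{eq:system1} becomes $u^2+w^2=A$ and $uw=B$, to be solved with $u\geq w\geq 0$. Since $(u+w)^2=A+2B$ and $(u-w)^2=A-2B$, the numbers $u$ and $w$ are the two roots of $x^2-\sqrt{A+2B}\,x+B=0$, namely $u=\tfrac12(\sqrt{A+2B}+\sqrt{A-2B})$ and $w=\tfrac12(\sqrt{A+2B}-\sqrt{A-2B})$. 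The only point to check for existence is that the discriminant $A-2B$ is non-negative: at $s=t$ one has $A-2B=(\sinh(2t)-\sinh t)^2\geq 0$, and $A$ is strictly increasing in $s$ while $B$ depends only on $t$, so $A-2B\geq 0$ for every $s\geq t$. As $\sinh$ is a strictly increasing bijection of $[0,\infty)$ onto itself, taking $\beta,\gamma\geq 0$ with $\sinh\beta=u$ and $\sinh\gamma=w$ produces the unique pair with $\beta\geq\gamma\geq 0$, which is the first assertion.

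For the estimates I would exploit that, in the range $1\leq t\leq s\leq \tfrac{3t}{2}$, all relevant arguments are $\geq 1$, so $\sinh x$ is comparable to $\tfrac12 e^x$ up to factors that tend to $1$. The guiding heuristic is that $\sinh^2(2s)$ dominates $\sinh^2 s$ in $A$, so that $u=\sinh\beta$ is essentially $\sqrt A\approx\sinh(2s)$ and hence $\beta\approx 2s$; then $w=\sinh\gamma=B/u\approx \sinh(2t)\sinh t/\sinh(2s)\approx\tfrac12 e^{3t-2s}$, so that $\gamma\approx 3t-2s$. The hypothesis $s\leq\tfrac{3t}{2}$ is exactly what guarantees $3t-2s\geq 0$, and $s\geq t$ guarantees $2s\geq 3t-2s$, i.e.\ $\beta\geq\gamma$, consistently with the first part.

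To turn this into the explicit bounds I would first show that $w\ll u$ throughout the range: indeed $w/u=B/u^2$ is comparable to $B/A$, which is of order $e^{-t}$ or smaller, so $w^2/A=O(e^{-2t})$ and $u^2=A-w^2=A\bigl(1-O(e^{-2t})\bigr)$; hence $u$ lies between $\sinh(2s)$ and $\bigl(1+o(1)\bigr)\sinh(2s)$. Feeding the two-sided estimates $\tfrac{2}{5}e^x\leq\sinh x\leq\tfrac12 e^x$ (valid for $x\geq 1$, with sharper lower constants for $x\geq 2$) into $u\approx\sqrt A$ and taking logarithms gives $|\beta-2s|\leq 1$ with room to spare, and the same estimates applied to $\sinh\gamma=B/u$ yield $|\gamma-(3t-2s)|\leq 1$, which is the second line of \eqref{eq:system2}.

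The main obstacle is the corner where $s$ is close to $\tfrac{3t}{2}$: there $3t-2s$ and $\gamma$ are both small, the crude bound $u^2\in[A/2,A]$ is far too lossy, and one must use the sharper relation $u^2/A=1-O(e^{-2t})$ (equivalently, that $w$ is genuinely negligible) to keep the constant in $|\gamma+2s-3t|\leq 1$ below $1$. At the extreme point $s=\tfrac{3t}{2}$ one checks directly that $\sinh\gamma\to\tfrac12$, i.e.\ $\gamma$ tends to a value less than $1$, comfortably within the claimed bound. Away from this boundary the estimates carry ample slack, so once the near-boundary regime is handled with the refined comparison the two inequalities follow.
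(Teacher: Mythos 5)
Your argument is correct: the reduction of \eqref{eq:system1} to the quadratic $x^2-\sqrt{A+2B}\,x+B=0$ with discriminant $A-2B\geq(\sinh(2t)-\sinh t)^2\geq 0$ settles existence and uniqueness cleanly, and the two-sided bounds $\tfrac{2}{5}e^x\leq\sinh x\leq\tfrac12 e^x$ for $x\geq 1$ do carry the estimates \eqref{eq:system2} through (in fact the crude bound $A/2\leq u^2\leq A$ already suffices, even at the corner $s=\tfrac{3t}{2}$, so your worry there is unnecessary). The paper itself omits the proof and cites \cite[Lemma 3.19]{haagerupdelaat1}, whose argument is essentially the same elementary one you give.
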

\begin{lem} \label{lem:comparest}
  For all $p > 12$, there exists a constant $C_5(p) > 0$ such that whenever $s,t \geq 0$ satisfy $2 \leq t \leq s \leq \frac{6}{5}t$, then for all $\varphi \in C(K \backslash G \slash K)$ for which $\check{\varphi} \in MS^p(L^2(G))$,
\[
  |\varphi(D(2s,s))-\varphi(D(2t,t))| \leq C_5(p) e^{-\frac{s}{8}(\frac{1}{4}-\frac{3}{p})} \|\check{\varphi}\|_{MS^p(L^2(G))}.
\]
\end{lem}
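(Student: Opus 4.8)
The plan is to interpolate between the two points $D(2s,s)$ and $D(2t,t)$, which both lie on the half-line $\alpha_1=2\alpha_2$, through a single auxiliary point $D(\beta,\gamma)$ that is simultaneously reachable by the ``circle'' estimate of Lemma \ref{lem:betagammacir} and the ``hyperbola'' estimate of Lemma \ref{lem:betagammahyp}. Concretely, since $2 \le t \le s \le \frac{6}{5}t$ in particular gives $1 \le t \le s \le \frac{3t}{2}$, I would apply Lemma \ref{lem:rhosigma} to the pair $(s,t)$ to produce $(\beta,\gamma)$ with $\beta \ge \gamma \ge 0$ solving \eqref{eq:system1}; by the very definition of $s(\beta,\gamma)$ and $t(\beta,\gamma)$ in Lemma \ref{lem:betagamma} this means precisely $s=s(\beta,\gamma)$ and $t=t(\beta,\gamma)$. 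Crucially, Lemma \ref{lem:rhosigma} also supplies the quantitative control $|\beta-2s|\le 1$ and $|\gamma+2s-3t|\le 1$.

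With this $(\beta,\gamma)$ fixed, Lemma \ref{lem:betagammacir} bounds $|\varphi(D(\beta,\gamma))-\varphi(D(2s,s))|$ by $C_3(p)\,e^{-\frac{\beta-\gamma}{4}(\frac14-\frac1p)}\|\check{\varphi}\|_{MS^p(L^2(G))}$, while Lemma \ref{lem:betagammahyp} bounds $|\varphi(D(\beta,\gamma))-\varphi(D(2t,t))|$ by $C_4(p)\,e^{-\frac{\gamma}{4}(\frac14-\frac3p)}\|\check{\varphi}\|_{MS^p(L^2(G))}$. A single application of the triangle inequality then reduces the lemma to showing that both exponents are dominated by $-\frac{s}{8}(\frac14-\frac3p)$, up to an additive constant depending only on $p$.

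The remaining work is elementary bookkeeping with the estimates of Lemma \ref{lem:rhosigma}. From $|\beta-2s|\le 1$ and $|\gamma+2s-3t|\le 1$ I obtain $\beta-\gamma \ge (2s-1)-(3t-2s+1)=4s-3t-2$, and since $t\le s$ this gives $\beta-\gamma \ge s-2 \ge 0$; using $\frac14-\frac1p \ge \frac14-\frac3p>0$ for $p>12$, the circle term decays at least as fast as $e^{-\frac{s}{8}(\frac14-\frac3p)}$, up to the factor $e^{\frac12(\frac14-\frac3p)}$. For the hyperbola term I would use the hypothesis $s\le\frac65 t$, equivalently $3t\ge\frac52 s$, to deduce $\gamma \ge 3t-2s-1 \ge \frac{s}{2}-1$, whence $e^{-\frac{\gamma}{4}(\frac14-\frac3p)} \le e^{\frac14(\frac14-\frac3p)}e^{-\frac{s}{8}(\frac14-\frac3p)}$. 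Combining the two contributions and setting $C_5(p)=(C_3(p)+C_4(p))\,e^{\frac12(\frac14-\frac3p)}$ yields the claim.

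The only genuinely delicate point --- and the reason the statement carries the restrictive hypothesis $s\le\frac65 t$ rather than the weaker $s\le\frac32 t$ of Lemma \ref{lem:rhosigma} --- is the lower bound on $\gamma$. The hyperbola estimate contributes useful decay only when $\gamma$ itself grows linearly in $s$, and $\gamma \approx 3t-2s$ remains bounded below by a positive multiple of $s$ precisely when $s$ is bounded by a constant strictly less than $\frac32 t$; the choice $\frac65$ makes the coefficient come out to $\frac12$, which is exactly what produces the target factor $\frac{s}{8}=\frac{\gamma}{4}$. I expect this calibration to be the main thing to get right, as the circle term then decays strictly faster (its rate involves $\frac14-\frac1p$ rather than $\frac14-\frac3p$) and is automatically absorbed; everything else is a direct concatenation of the two previously established estimates.
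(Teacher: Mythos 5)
Your proposal follows the paper's proof essentially verbatim: the same auxiliary point $(\beta,\gamma)$ obtained from Lemma \ref{lem:rhosigma}, the same two estimates from Lemma \ref{lem:betagammacir} and Lemma \ref{lem:betagammahyp} combined by the triangle inequality, and the same lower bounds $\beta-\gamma \geq 4s-3t-2 \geq s-2$ and $\gamma \geq 3t-2s-1 \geq \frac{s-2}{2}$, with the hypothesis $s \leq \frac{6}{5}t$ used exactly as you describe. The only (harmless) discrepancy is in the final constant: the circle term requires the prefactor $e^{\frac{1}{2}(\frac{1}{4}-\frac{1}{p})}$ rather than $e^{\frac{1}{2}(\frac{1}{4}-\frac{3}{p})}$, so one should take, e.g., $C_5(p)=e^{\frac{1}{8}}(C_3(p)+C_4(p))$.
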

\begin{proof}
  Choose $\beta \geq \gamma \geq 0$ as in Lemma \ref{lem:rhosigma}. Then by Lemma \ref{lem:betagammacir} and Lemma \ref{lem:betagammahyp}, we have for $p > 12$,
\begin{equation} \nonumber
\begin{split}
  |\varphi(D(2s,s))-\varphi(D(\beta,\gamma))| &\leq C_3(p)e^{-\frac{\beta-\gamma}{4}(\frac{1}{4}-\frac{1}{p})}\|\check{\varphi}\|_{MS^p(L^2(G))}, \\
  |\varphi(D(2t,t))-\varphi(D(\beta,\gamma))| &\leq C_4(p)e^{-\frac{\gamma}{4}(\frac{1}{4}-\frac{3}{p})}\|\check{\varphi}\|_{MS^p(L^2(G))}.
\end{split}
\end{equation}
Moreover, by \eqref{eq:system2},
\begin{equation} \nonumber
\begin{split}
  \beta - \gamma &\geq (2s-1) - (3t-2s+1) = 4s - 3t -2 \geq s-2, \\
  \gamma &\geq 3t-2s-1 \geq \frac{5}{2}s-2s-1=\frac{s-2}{2}.
\end{split}
\end{equation}
Hence, since $s \geq 2$, we have $\min\{e^{-\gamma},e^{-(\beta-\gamma)}\} \leq e^{-\frac{s-2}{2}}$. Thus, the lemma follows from Lemma \ref{lem:betagammacir} and Lemma \ref{lem:betagammahyp} with $C_5(p)=e^{\frac{1}{16}}(C_3(p)+C_4(p))$.
\end{proof}
\begin{lem} \label{lem:limit}
  For $p > 12$, there exists a constant $C_6(p) > 0$ such that for all $\varphi \in C(K \backslash G \slash K)$ for which $\check{\varphi} \in MS^p(L^2(G))$, the limit $c_{\infty}(\varphi)=\lim_{t \to \infty} \varphi(D(2t,t))$ exists, and for all $t \geq 0$,
\[
  |\varphi(D(2t,t))-c_{\infty}(\varphi)| \leq C_6(p)e^{-\frac{t}{8}(\frac{1}{4}-\frac{3}{p})}\|\check{\varphi}\|_{MS^p(L^2(G))}.
\]
\end{lem}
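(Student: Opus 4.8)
The plan is to prove the lemma by iterating Lemma \ref{lem:comparest} along a \emph{geometric} sequence of scales. Write $g(s)=\varphi(D(2s,s))$ and set $\kappa=\frac{1}{8}\left(\frac{1}{4}-\frac{3}{p}\right)$, which is strictly positive precisely because $p>12$. Lemma \ref{lem:comparest} compares $g$ at two scales whose ratio lies in $[1,\frac{6}{5}]$, provided both are at least $2$, with an estimate that decays like $e^{-\kappa s'}$ in the \emph{larger} scale $s'$. The idea is to chain such comparisons from a fixed scale $s$ out to infinity. The feature that makes this work is that along a geometric progression of scales the individual bounds decay \emph{doubly} exponentially, so their sum is controlled by a constant multiple of the first term; this is what ultimately yields a bound uniform in the far endpoint.

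Concretely, I would fix $s\geq 2$ and an arbitrary $s'\geq s$, and build a finite chain $s=u_0\leq u_1\leq\cdots\leq u_N=s'$ by setting $u_{i+1}=\min\{\frac{6}{5}u_i,\,s'\}$. Each consecutive pair then satisfies $2\leq u_i\leq u_{i+1}\leq\frac{6}{5}u_i$, so Lemma \ref{lem:comparest} applies and gives $|g(u_{i+1})-g(u_i)|\leq C_5(p)e^{-\kappa u_{i+1}}\|\check{\varphi}\|_{MS^p(L^2(G))}$. Telescoping yields
\[
  |g(s')-g(s)|\leq C_5(p)\|\check{\varphi}\|_{MS^p(L^2(G))}\sum_{i=0}^{N-1}e^{-\kappa u_{i+1}}.
\]
Since $u_{i+1}\geq\left(\frac{6}{5}\right)^i s$ at every step (including the final capped one, where $s'\geq u_i=\left(\frac{6}{5}\right)^i s$), the sum is majorized by $\sum_{i\geq 0}e^{-\kappa(6/5)^i s}$. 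Factoring out $e^{-\kappa s}$ and using Bernoulli's inequality $\left(\frac{6}{5}\right)^i\geq 1+\frac{i}{5}$ produces a geometric majorant, so for $s\geq 2$ this is at most $\left(1-e^{-2\kappa/5}\right)^{-1}e^{-\kappa s}$. The decisive point is that the resulting bound is independent of $s'$.

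Consequently $|g(s')-g(s)|\to 0$ uniformly in $s'$ as $s\to\infty$, so $g$ meets the Cauchy criterion and the limit $c_\infty(\varphi):=\lim_{s\to\infty}g(s)$ exists. Letting $s'\to\infty$ in the displayed inequality gives $|g(s)-c_\infty(\varphi)|\leq C_5(p)\left(1-e^{-2\kappa/5}\right)^{-1}e^{-\kappa s}\|\check{\varphi}\|_{MS^p(L^2(G))}$ for all $s\geq 2$. For $0\leq t<2$ the bound follows directly from $\|\varphi\|_\infty\leq\|\check{\varphi}\|_{MS^p(L^2(G))}$, since then $|g(t)-c_\infty(\varphi)|\leq 2\|\check{\varphi}\|_{MS^p(L^2(G))}\leq 2e^{2\kappa}e^{-\kappa t}\|\check{\varphi}\|_{MS^p(L^2(G))}$. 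Taking $C_6(p)=\max\{C_5(p)(1-e^{-2\kappa/5})^{-1},\,2e^{2\kappa}\}$ then gives the claim for all $t\geq 0$. I expect the only genuine subtlety to be arranging the chain geometrically rather than with equally spaced ratios: an equal-ratio subdivision of $[s,s']$ would use $\sim\log(s'/s)$ terms and fail to give a bound uniform in $s'$, whereas the geometric choice exploits the doubly-exponential decay so that the tail past the first term is negligible.
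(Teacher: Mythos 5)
Your proof is correct, and it follows the same overall strategy as the paper: telescope Lemma \ref{lem:comparest} along a chain of scales running from a fixed $s$ out to an arbitrary $s'$, verify that the sum of the errors is bounded uniformly in $s'$ by a constant times the first term, and conclude via the Cauchy criterion. The one genuine difference is the choice of chain. Writing $\kappa=\frac{1}{8}\bigl(\frac{1}{4}-\frac{3}{p}\bigr)$ as you do, the paper takes \emph{arithmetic} unit steps $u\mapsto u+1$ (plus a final fractional step), starting from $t\geq 5$ so that $u+1\leq\frac{6}{5}u$; the errors then form an ordinary geometric series $e^{-\kappa t}\sum_{j\geq 0}e^{-\kappa j}$, already uniform in $s'$. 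You instead take \emph{geometric} steps of ratio $\frac{6}{5}$ from the lower threshold $s\geq 2$, which yields doubly exponential decay and needs the Bernoulli inequality to resum. Both are fine, and yours even gets by with a slightly smaller threshold. However, your closing remark --- that an equally spaced subdivision ``would fail to give a bound uniform in $s'$'' --- is not correct: with a fixed step size $h\leq s/5$ one has $\sum_{i}e^{-\kappa(s+ih)}\leq e^{-\kappa s}\bigl(1-e^{-\kappa h}\bigr)^{-1}$, which is manifestly independent of $s'$, and this is exactly the computation in the paper (with $h=1$ and $s\geq 5$). The doubly exponential decay produced by your geometric chain is a luxury, not a necessity; what matters is only that the increments decay summably in the step index, which exponential decay in the scale already guarantees for any chain whose scales grow at least linearly.
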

\begin{proof}
  By Lemma \ref{lem:comparest}, we have for $u \geq 5$ and $\gamma \in [0,1]$, that
\begin{equation} \label{eq:ugamma}
  |\varphi(D(2u,u))-\varphi(D(2u+2\gamma,u+\gamma))| \leq C_5(p)e^{-\frac{u}{8}(\frac{1}{4}-\frac{3}{p})}\|\check{\varphi}\|_{MS^p(L^2(G))},
\end{equation}
since $u \leq u + \gamma$. Let $s \geq t \geq 5$. Then $s=t+n+\delta$, where $n \geq 0$ is an integer and $\delta \in [0,1)$. Applying equation \eqref{eq:ugamma} to $(u,\gamma)=(t+j,1)$, $j=0,1,\ldots,n-1$ and $(u,\gamma)=(t+n,\delta)$, we obtain
\begin{equation} \nonumber
\begin{split}
 |\varphi(D(2t,t))-\varphi(D(2s,s))| &\leq C_5(p)\left(\sum_{j=0}^n e^{-\frac{t+j}{8}(\frac{1}{4}-\frac{3}{p})}\right) \|\check{\varphi}\|_{MS^p(L^2(G))} \\ &\leq C_5(p)^{\prime}e^{-\frac{t}{8}(\frac{1}{4}-\frac{3}{p})}\|\check{\varphi}\|_{MS^p(L^2(G))},
\end{split}
\end{equation}
where $C_5^{\prime}(p)=C_5(p)\sum_{j=0}^{\infty} e^{-\frac{j}{8}(\frac{1}{4}-\frac{3}{p})}$. Hence, $(\varphi(D(2t,t)))_{t \geq 5}$ is a Cauchy net. Therefore, $c_{\infty}(\varphi)=\lim_{t \to \infty} \varphi(D(2t,t))$ exists, and
\[
  |\varphi(D(2t,t))-c_{\infty}(\varphi)|=\lim_{s \to \infty} |\varphi(D(2t,t))-\varphi(D(2s,s))| \leq C_5^{\prime}(p)e^{-\frac{t}{8}(\frac{1}{4}-\frac{3}{p})}\|\check{\varphi}\|_{MS^p(L^2(G))}
\]
for all $t \geq 5$. Since $\|\varphi\|_{\infty} \leq \|\check{\varphi}\|_{MS^p(L^2(G))}$, we have for all $0 \leq t < 5$,
\[
  |\varphi(D(2t,t))-c_{\infty}(\varphi)| \leq 2\|\check{\varphi}\|_{MS^p(L^2(G))}.
\]
Hence, the lemma follows with $C_6(p)=\max\{C_5^{\prime}(p),2e^{\frac{5}{32}}\}$.
\end{proof}
\begin{proof}[Proof of Proposition \ref{prp:sp2abapschur}]
Let $\varphi \in C(K \backslash G \slash K)$ be such that $\check{\varphi} \in MS^p(L^2(G))$, and let $(\alpha_1,\alpha_2)=(\beta,\gamma)$, where $\beta \geq \gamma \geq 0$. Assume first $\beta \geq 2\gamma$. Then $\beta - \gamma \geq \frac{\beta}{2}$, so by Lemma \ref{lem:betagamma} and Lemma \ref{lem:betagammacir}, there exists an $s \geq \frac{\beta}{4}$ such that
\[
  |\varphi(D(\beta,\gamma))-\varphi(D(2s,s))| \leq C_3(p)e^{-\frac{\beta}{8}(\frac{1}{4}-\frac{1}{p})}\|\check{\varphi}\|_{MS^p(L^2(G))}.
\]
By Lemma \ref{lem:limit},
\begin{equation} \nonumber
\begin{split}
  |\varphi(D(2s,s))-c_{\infty}(\varphi)| &\leq C_6(p)e^{-\frac{s}{8}(\frac{1}{4}-\frac{3}{p})}\|\check{\varphi}\|_{MS^p(L^2(G))} \\
    &\leq C_6(p)e^{-\frac{\beta}{32}(\frac{1}{4}-\frac{3}{p})}\|\check{\varphi}\|_{MS^p(L^2(G))}.
\end{split}
\end{equation}
Hence,
\[
  |\varphi(D(\beta,\gamma))-c_{\infty}(\varphi)| \leq (C_3(p)+C_6(p))e^{-\frac{\beta}{32}(\frac{1}{4}-\frac{3}{p})}\|\check{\varphi}\|_{MS^p(L^2(G))}.
\]
Assume now that $\beta < 2\gamma$. Then, by Lemma \ref{lem:betagamma} and Lemma \ref{lem:betagammahyp}, we obtain that there exists a $t \geq \frac{\gamma}{2} > \frac{\beta}{4}$ such that
\[
  |\varphi(D(\beta,\gamma))-\varphi(D(2t,t))| \leq C_4(p)e^{-\frac{\beta}{8}(\frac{1}{4}-\frac{3}{p})}\|\check{\varphi}\|_{MS^p(L^2(G))},
\]
and again by Lemma \ref{lem:limit},
\begin{equation} \nonumber
\begin{split}
  |\varphi(D(2t,t))-c_{\infty}(\varphi)| &\leq C_6(p)e^{-\frac{t}{8}(\frac{1}{4}-\frac{3}{p})}\|\check{\varphi}\|_{MS^p(L^2(G))} \\
    &\leq C_6(p)e^{-\frac{\beta}{32}(\frac{1}{4}-\frac{3}{p})}\|\check{\varphi}\|_{MS^p(L^2(G))}.
\end{split}
\end{equation}
Hence,
\[
  |\varphi(D(\beta,\gamma))-c_{\infty}(\varphi)| \leq (C_4(p)+C_6(p))e^{-\frac{\beta}{32}(\frac{1}{4}-\frac{3}{p})}\|\check{\varphi}\|_{MS^p(L^2(G))}.
\]
Combining these results, and using that $\|\alpha\|_2 = \sqrt{\beta^2+\gamma^2} \leq \sqrt{2}\beta$, it follows that for all $\beta \geq \gamma \geq 0$,
\[
  |\varphi(D(\beta,\gamma))-c_{\infty}(\varphi)| \leq C_1(p)e^{-C_2(p)\|\alpha\|_2}\|\check{\varphi}\|_{MS^p(L^2(G))},
\]
where $C_1(p)=\max\{C_3(p)+C_6(p),C_4(p)+C_6(p)\}$ and $C_2(p)=\frac{1}{32\sqrt{2}}(\frac{1}{4}-\frac{3}{p})$. This proves the proposition.
\end{proof}
The values $p \in [1,\frac{12}{11}) \cup (12,\infty]$ give sufficient conditions for $\Sp(2,\mathbb{R})$ to fail the $\apschur$. We would like to point out that the set of these values might be bigger.

\section{Noncommutative $L^p$-spaces without the OAP} \label{sec:general}
In the previous section we proved that $\Sp(2,\bbR)$ does not have the $\apschur$ for $p \in [1,\frac{12}{11}) \cup (12,\infty]$. By Lemma \ref{lem:oapapschur}, this directly implies the following theorem.
\begin{thm}
  Let $p \in [1,\frac{12}{11}) \cup (12,\infty]$, and let $\Gamma$ be a lattice in $\Sp(2,\bbR)$. Then the noncommutative $L^p$-space $L^p(L(\Gamma))$ does not have the OAP (or CBAP).
\end{thm}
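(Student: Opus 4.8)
The plan is to obtain the statement as a direct corollary of the machinery already assembled, since the genuine analytic content was settled in proving Theorem~\ref{thm:sp2notapschur}; what remains is to route that conclusion through the two transfer principles for the $\apschur$. I would chain together the lattice-invariance of the $\apschur$ constant (Theorem~\ref{thm:apschurlattices}), the failure of the $\apschur$ for $G=\Sp(2,\bbR)$ in the prescribed range (Theorem~\ref{thm:sp2notapschur}), and the implication linking the OAP of $L^p(L(\Gamma))$ to the value of $\lambdaapschur(\Gamma)$ (Lemma~\ref{lem:oapapschur}).

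First I would fix $p\in(1,\frac{12}{11})\cup(12,\infty)$ and a lattice $\Gamma$ in $G$, observing that $\Gamma$ is a countable discrete group because $G$ is second countable, so that the hypotheses of Lemma~\ref{lem:oapapschur} are met. By Theorem~\ref{thm:sp2notapschur}, $G$ does not have the $\apschur$, and by the dichotomy $\lambdaapschur\in\{1,\infty\}$ recorded after Theorem~\ref{thm:apschurlattices} this means $\lambdaapschur(G)=\infty$. Theorem~\ref{thm:apschurlattices} then gives $\lambdaapschur(\Gamma)=\lambdaapschur(G)=\infty$, so in particular $\lambdaapschur(\Gamma)\neq 1$. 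Read contrapositively, Lemma~\ref{lem:oapapschur} shows that $L^p(L(\Gamma))$ does not have the OAP; and since the CBAP implies the OAP, $L^p(L(\Gamma))$ does not have the CBAP either.

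The endpoints $p=1$ and $p=\infty$, where Lemma~\ref{lem:oapapschur} no longer applies, I would handle through the companion facts: by Proposition~\ref{prp:apschurproperties}(1) the $\apschur$ at these values coincides with weak amenability, which $\Sp(2,\bbR)$---and hence, by lattice-invariance, $\Gamma$---fails \cite{haagerupgroupcsacbap}, while the failure of the OAP for $L(\Gamma)$ at $p=\infty$ reflects the failure of the AP for $\Gamma$ \cite{haagerupdelaat1}. I do not anticipate any real obstacle at this stage, as the argument is purely an assembly of cited results; the only care required is the bookkeeping---confirming the discreteness and countability of $\Gamma$, correctly translating ``$\Gamma$ lacks the $\apschur$'' into ``$\lambdaapschur(\Gamma)=\infty$'' via the above dichotomy, and applying the implication CBAP $\Rightarrow$ OAP in the direction that upgrades ``no OAP'' to ``no CBAP''.
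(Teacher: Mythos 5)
Your argument is correct and follows essentially the same route as the paper: Theorem \ref{thm:sp2notapschur} combined with the lattice invariance of $\lambdaapschur$ (Theorem \ref{thm:apschurlattices}) and the contrapositive of Lemma \ref{lem:oapapschur} gives the failure of the OAP, hence of the CBAP. You are in fact slightly more careful than the paper, whose proof cites only Lemma \ref{lem:oapapschur} and is silent about the endpoints $p=1,\infty$ where that lemma does not apply; your separate treatment of those cases via weak amenability and the AP is the appropriate way to cover them.
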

Combining Theorem \ref{thm:sp2notapschur} and Theorem \ref{thm:ldlsmain}, this implies the following result.
\begin{thm} \label{thm:maintheoremapschur}
  Let $p \in [1,\frac{12}{11}) \cup (12,\infty]$, and let $G$ be a connected simple Lie group with finite center and real rank greater than or equal to two. Then $G$ does not have the $\apschur$.
\end{thm}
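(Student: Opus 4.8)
The plan is to reduce the general statement to the two special cases already settled, namely $\SL(3,\bbR)$ (Theorem \ref{thm:ldlsmain}) and $\Sp(2,\bbR)$ (Theorem \ref{thm:sp2notapschur}), and then to propagate the failure of the $\apschur$ upward using the hereditary properties of $\lambdaapschur$ collected in Proposition \ref{prp:apschurproperties}. The first observation is purely numerical: the range $[1,\frac{12}{11}) \cup (12,\infty]$ in the statement is contained in $[1,\frac{4}{3}) \cup (4,\infty]$, since $\frac{12}{11} < \frac{4}{3}$ and $12 > 4$. Hence, for every $p$ in our range, Theorem \ref{thm:ldlsmain} (applied with $n=3$) gives $\lambdaapschur(\SL(3,\bbR)) = \infty$ and Theorem \ref{thm:sp2notapschur} gives $\lambdaapschur(\Sp(2,\bbR)) = \infty$, so neither group has the $\apschur$ for such $p$.

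The structural input I would isolate next is the following: every connected simple Lie group $G$ with finite center and $\rr(G) \geq 2$ contains a closed connected subgroup $H$ locally isomorphic to $\SL(3,\bbR)$ or to $\Sp(2,\bbR)$. This is the genuine content of the reduction, and it comes from the structure theory of the restricted root system $\Sigma$ of $G$. Since $\rr(G) = \dim \mathfrak{a} \geq 2$, one can select a two-dimensional subspace of $\mathfrak{a}^*$ whose intersection with $\Sigma$ is a rank-two subsystem; every irreducible rank-two root system (including the nonreduced type $BC_2$ and the exceptional type $G_2$) contains a subsystem of type $A_2$ or $B_2 = C_2$, and the subalgebra of $\mathfrak{g}$ generated by the associated restricted root spaces, together with the relevant coroots in $\mathfrak{a}$, is then isomorphic to $\mathfrak{sl}(3,\bbR)$ or $\mathfrak{sp}(4,\bbR)$. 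The corresponding analytic subgroup $H$ is closed because its Lie algebra is semisimple, and $H$ has finite center because $\Ad(H)$ is a connected linear semisimple group while $H \cap Z(G)$ is finite; together these force $Z(H)$ to be finite. The explicit verification of this dichotomy across all real forms of rank at least two is exactly the case-by-case analysis that Haagerup and the author carried out in the study of the AP, and I would appeal to that classification rather than repeat it.

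Granting the existence of $H$, the conclusion is immediate. Since $H$ is locally isomorphic to $\SL(3,\bbR)$ or $\Sp(2,\bbR)$ and has finite center, part (8) of Proposition \ref{prp:apschurproperties} gives $\lambdaapschur(H) = \lambdaapschur(\SL(3,\bbR)) = \infty$ or $\lambdaapschur(H) = \lambdaapschur(\Sp(2,\bbR)) = \infty$, for $p$ in the stated range. As $H$ is a closed subgroup of $G$, part (5) then yields $\lambdaapschur(G) \geq \lambdaapschur(H) = \infty$, so $G$ does not have the $\apschur$. The only genuinely difficult step is the structural reduction in the second paragraph; everything else is a two-line deduction from parts (5) and (8). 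Within that step, the points demanding care are that the rank-two subsystem can always be chosen of type $A_2$ or $C_2$ (so that $H$ is locally isomorphic to one of our two test groups precisely, and not to some other rank-two group), and that $H$ is genuinely closed with finite center, so that the hypotheses of parts (5) and (8) are satisfied verbatim.
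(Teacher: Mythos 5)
Your proposal is correct and follows essentially the same route as the paper: reduce to the existence of a closed connected subgroup $H$ with finite center that is locally isomorphic to $\SL(3,\bbR)$ or $\Sp(2,\bbR)$ (the paper cites Wang, Dorofaeff, Borel--Tits and Margulis for this, where you sketch the restricted-root-system argument), and then apply parts (5) and (8) of Proposition \ref{prp:apschurproperties} together with Theorems \ref{thm:ldlsmain} and \ref{thm:sp2notapschur}. The only cosmetic difference is that the paper first passes to the adjoint group by Wang's method before locating $H$, whereas you address the finiteness of $Z(H)$ directly.
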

\begin{proof}
Let $G$ be a connected simple Lie group with finite center and real rank greater than or equal to two. By Wang's method \cite{wang}, we may assume that $G$ is the adjoint group, so that $G$ has a connected semisimple subgroup $H$ with real rank $2$. Such a subgroup is closed, as was proved in \cite{dorofaeff}. It is known that $H$ has finite center and is locally isomorphic to either $\SL(3,\bbR)$ or $\Sp(2,\bbR)$ \cite{boreltits}, \cite{margulis}. Since the $\apschur$ passes to closed subgroups and is preserved under local isomorphisms (see Proposition \ref{prp:apschurproperties}), we conclude that $G$ does not have the $\apschur$ for $p \in [1,\frac{12}{11}) \cup (12,\infty]$, since both $\SL(3,\bbR)$ and $\Sp(2,\bbR)$ do not have the $\apschur$ for such $p$.
\end{proof}
Combining this result with Proposition \ref{thm:apschurlattices} and Lemma \ref{lem:oapapschur}, we obtain the main theorem of this article.
\begin{thm} \label{thm:maintheoremnclp}
  Let $p \in [1,\frac{12}{11}) \cup (12,\infty]$, and let $\Gamma$ be a lattice in a connected simple Lie group with finite center and real rank greater than or equal to two. Then $L^p(L(\Gamma))$ does not have OAP (or CBAP).
\end{thm}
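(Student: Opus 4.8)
The plan is to obtain the statement by assembling the machinery already in place, reading it entirely through the single numerical invariant $\lambdaapschur(\Gamma)$. The conceptual point is that Lemma \ref{lem:oapapschur} converts the OAP of $L^p(L(\Gamma))$ into the assertion $\lambdaapschur(\Gamma)=1$; so it suffices to show $\lambdaapschur(\Gamma)\neq 1$ for the relevant $p$, and then to note that the CBAP implies the OAP, which disposes of both properties simultaneously.

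First I would fix the ambient group: let $G$ be the connected simple Lie group with finite center and real rank at least two in which $\Gamma$ sits as a lattice. For $p\in[1,\tfrac{12}{11})\cup(12,\infty]$, Theorem \ref{thm:maintheoremapschur} gives that $G$ does not have the $\apschur$, that is, $\lambdaapschur(G)>1$. Next I would transport this to the lattice: by Theorem \ref{thm:apschurlattices} we have $\lambdaapschur(\Gamma)=\lambdaapschur(G)>1$. Since $\Gamma$ is a countable discrete group and, for $p\in(1,\infty)$, the invariant $\lambdaapschur(\Gamma)$ takes only the values $1$ and $\infty$ (the dichotomy recorded just after Theorem \ref{thm:apschurlattices}), it follows that $\lambdaapschur(\Gamma)=\infty$. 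Then the contrapositive of Lemma \ref{lem:oapapschur} shows that for $p\in(1,\tfrac{12}{11})\cup(12,\infty)$ the space $L^p(L(\Gamma))$ cannot have the OAP, and as the CBAP implies the OAP, it has neither.

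The one genuinely delicate point — and the main obstacle — is the treatment of the endpoints $p\in\{1,\infty\}$, where Lemma \ref{lem:oapapschur} is unavailable (it is stated only for $p\in(1,\infty)$) and where $L^\infty(L(\Gamma))=L(\Gamma)$ and $L^1(L(\Gamma))$ must be addressed directly. Here I would instead invoke Proposition \ref{prp:apschurproperties}(1), by which the $\apschur$ at $p=1$ or $p=\infty$ coincides with weak amenability and $\lambdaapschur=\Lambda$ is the Cowling--Haagerup constant; the transfer $\lambdaapschur(\Gamma)=\lambdaapschur(G)=\infty$ then says that $\Gamma$ is not weakly amenable, so $L(\Gamma)$ fails the CBAP, and the failure of the OAP at the endpoints (together with the $p=1$ case) follows from the corresponding characterizations of these approximation properties for the von Neumann algebra and its predual. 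The remainder is bookkeeping: one need only confirm that a lattice in such a Lie group is countable so that the discrete-group results apply, which is immediate since $\Gamma$ is a discrete subgroup of a second countable group.
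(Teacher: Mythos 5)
Your argument is exactly the paper's: the paper proves Theorem \ref{thm:maintheoremnclp} in one line by combining Theorem \ref{thm:maintheoremapschur}, Theorem \ref{thm:apschurlattices}, and Lemma \ref{lem:oapapschur}, which is precisely your chain $\lambdaapschur(G)>1 \Rightarrow \lambdaapschur(\Gamma)>1 \Rightarrow$ failure of OAP (and hence of CBAP). Your additional care at the endpoints $p\in\{1,\infty\}$ via Proposition \ref{prp:apschurproperties}(1) and weak amenability is a legitimate supplement to a point the paper passes over silently, but it does not change the route.
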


\section*{Acknowledgements}
I thank Uffe Haagerup and Magdalena Musat for valuable discussions and useful suggestions and remarks.

\bibliographystyle{amsplain}

\end{document}